\pdfmapfile{=<name>.map}
\documentclass[12pt,oneside,reqno,fleqn]{amsart}
\usepackage{amssymb}
\usepackage{bbm}
\usepackage{cases}
\usepackage{amsmath,mathtools}
\usepackage{graphicx}
\usepackage{mathrsfs}
\usepackage{stmaryrd}
\usepackage{color}
\usepackage{soul}
\usepackage[dvipsnames]{xcolor}
\usepackage{amsfonts}
\usepackage{amsmath,amssymb,amsthm}
\usepackage{libertine}
\usepackage[T1]{fontenc}
\usepackage[libertine,varbb]{newtxmath}
\usepackage[colorlinks,linkcolor=Red,citecolor=Red,urlcolor=Red]{hyperref}
\usepackage[shortlabels] {enumitem}
\usepackage[nameinlink,capitalize]{cleveref}
\usepackage[spacing=true,kerning=true,tracking=true,letterspace=50]{microtype}
\usepackage{orcidlink}
\usepackage{ulem}
\usepackage[disable]{todonotes}
\pagestyle{plain}
\textwidth=160 mm
\oddsidemargin=0mm
\topmargin=0mm
\hypersetup{pdftitle={Kinetic SDEs}, pdfauthor={Chengcheng Ling}}
\numberwithin{equation}{section}
\newcommand{\be}{\begin{eqnarray}}
\newcommand{\ee}{\end{eqnarray}}
\newcommand{\ce}{\begin{eqnarray*}}
\newcommand{\de}{\end{eqnarray*}}
\newtheorem{theorem}{Theorem}[section]
\newtheorem{lemma}[theorem]{Lemma}
\newtheorem{proposition}[theorem]{Proposition}
\newtheorem{corollary}[theorem]{Corollary}
\theoremstyle{remark}
\newtheorem{assumption}[theorem]{Assumption}

\newtheorem{example}[theorem]{Example}
\newtheorem{remark}[theorem]{Remark}
\newtheorem{definition}[theorem]{Definition}

\crefname{eqn}{Equation}{Equations}
\crefname{assumption}{Assumption}{Assumptions}
\crefname{innercustomthm}{Condition}{Conditions}
\crefrangelabelformat{innercustomthm}{#3#1#4-#5#2#6}

\def\bbp{{\boldsymbol{p}}}

\def\bbq{{\boldsymbol{q}}}
\def\bba{{\boldsymbol{a}}}

\def\bb2{{\boldsymbol{2}}}

\def\={&\!\!=\!\!&}

\def\e{{\mathrm{e}}}
\def\eps{\varepsilon}

\def\b{\beta}
\def\p{\partial}

\def\<{{\langle}}
\def\>{{\rangle}}
\def\({{\Big(}}
\def\){{\Big)}}

\def\bx{{\mathbf{x}}}

\def\dif{d}

\def\={&\!\!=\!\!&}
\def\bt{\begin{theorem}}
\def\et{\end{theorem}}
\def\bl{\begin{lemma}}
\def\el{\end{lemma}}
\def\br{\begin{remark}}
\def\er{\end{remark}}
\def\bd{\begin{definition}}
\def\ed{\end{definition}}
\def\bp{\begin{proposition}}
\def\ep{\end{proposition}}
\def\bc{\begin{corollary}}
\def\ec{\end{corollary}}
\def\bx{\begin{example}}
\def\ex{\end{example}}
\def\cA{{\mathcal A}}

\def\cF{{\mathcal F}}

\def\cG{{\mathcal G}}

\def\cI{{\mathcal I}}

\def\cR{{\mathcal R}}
\def\cS{{\mathcal S}}

\def\mC{{\mathbb C}}

\def\mE{{\mathbb E}}
\def\E{\mE}

\def\mL{{\mathbb L}}

\def\mN{{\mathbb N}}

\def\mP{{\mathbb P}}

\def\mR{{\mathbb R}}

\def\sC{{\mathscr C}}

\def\sF{{\mathscr F}}
\def\sG{{\mathscr G}}

\def\bC{{\mathbb C}}
\def\bB{{\mathbb B}}

\def\geq{\geqslant}
\def\leq{\leqslant}

\newcommand{\R}{{\mathbb R}}

\newcommand{\norm}[1]{{\left\vert\kern-0.25ex\left\vert\kern-0.25ex\left\vert #1
    \right\vert\kern-0.25ex\right\vert\kern-0.25ex\right\vert}}

\renewcommand{\le}{\leq}
\renewcommand{\ge}{\geq}
\allowdisplaybreaks
\begin{document}
		\title{Strong convergence of the Euler scheme for singular kinetic SDEs driven by $\alpha$-stable processes}
 \author{ Chengcheng Ling 
}
	\date{\today}
 \address{ University of Augsburg, Institut f\"ur Mathematik,
86159 Augsburg,  Germany.}
\email{ chengcheng.ling@uni-a.de}
\begin{abstract}
We study the strong approximation of the solutions to singular stochastic kinetic equations (also referred to as second-order SDEs) driven by $\alpha$-stable processes, using an Euler-type scheme inspired by \cite{HLL}. For these equations, the stability index $\alpha$ lies in the range $(1,2)$, and the drift term exhibits anisotropic $\beta$-Hölder continuity with $\beta >1 - \frac{\alpha}{2}$. We establish a convergence rate of $(\frac{1}{2} + \frac{\beta}{\alpha(1+\alpha)} \wedge \frac{1}{2})$, which aligns with the results in \cite{BDG-Levy} concerning first-order SDEs.
	\bigskip
		
  \noindent {{\sc Mathematics Subject Classification (2020):}
		Primary 60H35, 
  65C30, 
  60H10; 
		Secondary
		60H50, 
		60L90. 
		}

		\noindent{{\sc Keywords:} Singular SDEs; 
  Strong approximation; Kinetic SDEs; Second order SDEs; Euler scheme; Regularization by noise; Stochastic sewing, $\alpha$-stable noise.}
	\end{abstract}
	
	\maketitle

\section{Introduction}
We consider the following SDE:
\begin{align}\label{eq:SDE}
\begin{cases}
\dif X_t=V_t\dif t,\\
\dif V_t=b(Z_t)\dif t+\dif L_t,\quad Z_0=\zeta=(\xi,\eta)\in\cF_0,
\end{cases}
\end{align}
where $Z_t=(X_t,V_t)\in\mR^d\times\mR^d$, $b:\mR^{2d}\to\mR^d$ is measurable, and $L$ is a standard $d$-dimensional $\alpha$-stable process defined on the probability space $(\Omega,\cF,(\cF_s)_{s\ge0},\mP)$ where $\alpha\in(1,2)$; that is to say, $L$ is a L\'evy process with characteristic function $\Phi$ (i.e.$\forall t\geq0$, $\forall \xi\in\mR^d$, $\mE(\exp(i\xi\cdot L_t))=\exp(-t\Phi(\xi))$, $i$  is the complex unit) so that
\begin{align}\label{def:Phi}
    \Phi(\xi)=\int_{\mR^d\backslash\{0\}}(1-e^{i\xi\cdot x}+i\xi\cdot x\mathbf{1}_{|x|\leq1})\frac{dx}{|x|^{d+\alpha}}.
\end{align}
$\alpha$ is the stable index of $L$ which in general takes value in $(0,2]$. When $\alpha=2$, $L$ corresponds to Brownian motion.

In classical mechanics, kinetic equations describe the motion of particles with momentum and inertia.  In many physical and natural systems—such as  particles moving in disordered media—the environment is not smooth but rather influenced by irregular forces. In such cases, random "kicks" can occur, leading to non-Gaussian random perturbations \cite{C}.  \eqref{eq:SDE} represents such mathematical
models that describe the random evolution of physical systems with both inertial and random effects, often in environments with irregular, impulsive disturbances.   These equations usually are also called second-order SDEs because they account not only for the position of a particle but also its velocity and acceleration, thus incorporating both kinetic and stochastic dynamics (\cite{V, HRZ}); naturally the following classical SDE is called first order SDE:
\begin{align}
    \label{eq:non-de}dX_t=b(X_t)dt+dL_t,\quad X_0\in\mR^d.
\end{align}
In this paper we study \eqref{eq:SDE} with the focus on $b$ beyond Lipschitz continuous scope.  Reading through related literature, starting with \eqref{eq:non-de} driven by Brownian noise, there is a long list of results studying the equations with singular drifts \cite{Zv, Ver, KR, Zhang2011, G, GM}, and later with stable noise \cite{TTW, P, Csz, ABM, CZZ, LZ1, XZ, KP}, known as {\it{regularization by noise}} effect.

After obtaining the existence and uniqueness of the solutions to such equations,  developing accurate approximation schemes  is essential for understanding real-world systems where analytical solutions are rarely available. Approximation allows us to simulate the system  and gain insights into the statistical properties of its solutions, convergence rates. In particular, capturing the convergence rate is crucial for quantifying how well the numerical scheme represents the true dynamics.

For singular first order equation \eqref{eq:non-de}, again there is a long list of work studying
 its numerical approximations \cite{BDG, DGL, LL, MGY20, PSS, PS, PSS1}, which
indicates the well understanding for the first order singular systems. For second order equation  in the form of \eqref{eq:SDE}, there are fewer results. There are results on studying the solutions and its numerics in at least-Lipschitz regular setting, see \cite{PM, SW}.   Here we list those that we can find in the literature which however mostly are about Brownian case. \cite{MSH}  and \cite{T} study the time-discrete approximations (for instance, explicit and implicit Euler-Maruyama scheme) for \eqref{eq:SDE} under Lyapunov function type conditions which usually require certain regularity  on $b$. Later  \cite{LM}  obtains the non asymptotic bounds for the Monte Carlo algorithm.  \cite{LS18} seems to be the first one obtaining the quantitative strong convergence bound on the EM scheme with order $n^{-\frac{1}{4}}$ for $b$ being piecewise Lipschitz function which is different from the singularity type that we are considering here.

It is evident to see that the degenerate noise (i.e. vanishing noise in $X$ direction) and singularity of drift being together make the approximation problem more challenging.  For \eqref{eq:SDE}, \cite{HWZ20} shows the strong  well-posedness of it for $\alpha\in(1,2)$ and $b$ being $\gamma$-H\"older continuous in $x$ variable and $\beta$-H\"older continuous in $v$ with $\gamma\in (\frac{2+\alpha}{2(1+\alpha)},1)$ and $\beta\in (1-\frac{\alpha}{2},1)$. In  \cite{HRZ} the condition of $b$ for strong well-posedness is extended to anistrophic $\beta$-H\"older continuity with $\beta>1-\frac{\alpha}{2}$ which is also the condition that we assume in current work since we focus on its strong solutions;   however  for weak  well-posedness $b$ needs to be anistrophicly $\beta$-H\"older continuous only \cite{HRZ}.

Inspired by \cite{HLL},  we propose the following \emph{Euler scheme} to approximate \eqref{eq:SDE}: for any $n\in\mN$, define $k_n(t):=\frac{\lfloor nt \rfloor}{n}, t\in \mR_+$;
let $\{Z^n_t\}_{t\ge0}:=\{(X^n_t,V^n_t)\}_{t\ge0}$ be the solution to the following Euler scheme
\begin{align}\label{eq:SDE-EM}
\begin{cases}
X^n_t=\xi+\int_0^t V^n_s\dif s,\\
V^n_t=\eta+\int_0^t \Gamma_{s-k_n(s)}b(Z^n_{k_n(s)})\dif s+L_t,
\end{cases}
\end{align}
where
\begin{align*}
\Gamma_tf(z):=f(\Gamma_t z):=f(x+tv,v),\quad z=(x,v)\in\mR^d\times\mR^d.
\end{align*}
We may notice that comparing with the classical Euler-Maruyama scheme, there is no descrization on $X$ in algorithm \eqref{eq:SDE-EM} and it also contains an extra operator $\Gamma$ on $V$. Actually the appearance of $\Gamma$ does not cause any computational cost since it is simply a shift in $V$. The descrization of $X$ seems to more subtle for technical reason.   Nevertheless, noticing that $X^n$ is Lebesgue integral of $V$ which is rather explicit via its Euler-Maruyama scheme, it is still easier to be simulated comparing with \eqref{eq:SDE}.\\

Therefore, the goal of this paper is to show the convergence of $Z^n$ from \eqref{eq:SDE-EM} to $Z$ satisfying \eqref{eq:SDE}.  We mainly apply the {\it regularization by noise} techniques (\cite{LL, GL, BDG, DGL, GLL}) including {\it stochastic sewing lemma} \cite{Le} and {\it John-Nirenberg inequality} \cite{Le2022}. Usually these techniques also include {\it Zvonkin transformation} which is  one of the main tools for studying singular SDEs driven by L\'evy processes (\cite{Zv, Ver, Zhang2018, CMPZ}) and heavily replies on the regularity estimates of its corresponding Kolmogrov equation (\cite{B, IS, RZ24}). Differently from the aforementioned methods, we do not apply {\it Zvonkin transformation} but instead our idea is inspired from and closer to \cite{BDG-Levy}. Based on  the estimates on the transition density and semigroup of the process $M_t:=\left(\begin{smallmatrix}\int_0^t L_r dr\\L_t
    \end{smallmatrix}\right)$, we mainly show the estimates of the following types
\begin{align*}
S_1:=\| \int_S^T &f(M_r+\phi_r)-f(M_r+\varphi_r)dr\|_{L_p(\Omega)|\sG},
 \\
 S_2:=\| \int_S^T &f(M_r+\phi_r)-f(M_r+\psi_r)dr\|_{L_p(\Omega)|\sG}, \\
  S_3:= \| \int_S^T &f(M_r+\Gamma_{r-k_n(r)}\phi_{k_n(r)})-\Gamma_{r-k_n(r)}f(M_{k_n(r)}+\phi_{k_n(r)})dr\|_{L_2(\Omega)|\sG},
\end{align*}
where $\sG\subset\cF_s$, $\phi=Z^n-M, \varphi:=Z-M, \psi_t:=\Gamma_{t-k_n(t)}(Z_{k_n(t)}^n-M_{k_n(t)}),$ and $\|\cdot\|_{L_p(\Omega)|\sG}:=(\mE[|\cdot|^p|\sG])^\frac{1}{p}$, $p\geq1$.
Then the error of $Z-Z^n$ is obtained from these three type estimates above.  In the end we obtain the strong  convergence of $Z-Z^n$ with rate $n^{-(\frac{1}{2}+\frac{\beta}{\alpha(1+\alpha)})+\epsilon}$ where $\beta$ is the anisotropic H\"older  continuity of $b$ and $\epsilon$ is some sufficiently small constant.  The convergence rate also matches the rate obtained from \cite{BDG-Levy} on the first order SDE  \eqref{eq:non-de} and its Euler scheme
\begin{align}
    \label{eq:non-de-E}
  dX_t^n=b(X_{k_n(t)}^n)dt+dL_t,\quad X_0^n\in\mR^d.
\end{align}
\subsection*{Organization of the paper}
We introduce the frequently used notations and main result in Section \ref{sec:Notation-result}. The auxiliary estimates on kinetic semigroup and moments bound on the driven kinetic noise are collected in Section \ref{sec:auxi-tool}. Section \ref{sec:app-term} shows the convergence rate for the terms appearing in the difference $Z-Z^n$ and the final proof of the main result Section \ref{thm:main-rough} is presented in Section \ref{sec:fin-proof}.
\section{Notations and main results}\label{sec:Notation-result}
\subsection{Notations}
Here we collect all of the notations that we quite often use in the whole text. We start with introducing the anisotropic Besov spaces.
\subsection*{Anisotropic Besov spaces} \label{sec.AS}

For $f\in L^1(\mR^{2d})$, let $\hat f$ be the Fourier transform of $f$ defined by
$$
\hat f(\xi):=(2\pi)^{-2d}\int_{\mR^{2d}} \e^{-{\rm i}\xi\cdot z}f(z)\dif z, \quad\xi\in\mR^{2d},
$$
and $\check f$ denotes the inverse Fourier   transform of $f$ defined by
$$
\check f(z):=(2\pi)^{-2d}\int_{\mR^{2d}} \e^{{\rm i}\xi\cdot z}f(\xi)\dif\xi, \quad z\in\mR^{2d}.
$$
Let $\bba=(1+\alpha,1)$. For $z=(x,v)$, $z'=(x',v')\in\mR^{2d}$, we introduce the anisotropic distance
$$
|z-z'|_\bba:=|x- x'|^{1/(1+\alpha)}+|v-v'|.
$$
Note that $z\mapsto|z|_{\bba}$ is not smooth.
For $r>0$ and $z\in\mR^{2d}$, we also introduce the ball centered at $z$ and with radius $r$ with respect to the above distance
as follows:
$$
B^\bba_r(z):=\{z'\in\mR^{2d}:|z'-z|_\bba\leq r\},\ \ B^\bba_r:=B^\bba_r(0).
$$
Let $\chi^\bba_0$ be  a symmetric $C^{\infty}$-function  on $\mR^{2d}$ with
$$
\chi^\bba_0(\xi)=1\ \mathrm{for}\ \xi\in B^\bba_1\ \mathrm{and}\ \chi^\bba_0(\xi)=0\ \mathrm{for}\ \xi\notin B^\bba_2.
$$
We define
$$
\phi^\bba_j(\xi):=
\left\{
\begin{aligned}
&\chi^\bba_0(2^{-j\bba}\xi)-\chi^\bba_0(2^{-(j-1)\bba}\xi),\ \ &j\geq 1,\\
&\chi^\bba_0(\xi),\ \ &j=0,
\end{aligned}
\right.
$$
for $s\in\mR$ and $\xi=(\xi_1,\xi_2)$,
$
2^{s\bba }\xi=(2^{(1+\alpha)s}\xi_1, 2^{s}\xi_2).
$
Note that
\begin{align*}
{\rm supp}(\phi^\bba_j)\subset\big\{\xi: 2^{j-1}\leq|\xi|_\bba\leq 2^{j+1}\big\},\ j\geq 1,\ {\rm supp}(\phi^\bba_0)\subset B^\bba_2,
\end{align*}
and $\sum_{j\geq 0}\phi^\bba_j(\xi)=1,\ \ \forall\xi\in\mR^{2d}.$

Let $\cS$ be the space of all Schwartz functions on $\mR^{2d}$ and $\cS'$ the dual space of $\cS$, called the tempered distribution space.
For given $j\geq 0$, the  dyadic anisotropic block operator  $\mathcal{R}^\bba_j$ is defined on $\cS'$ as
\begin{align}\label{Ph0}
\mathcal{R}^\bba_jf(z):=(\phi^\bba_j\hat{f})\check{\ }(z)=\check{\phi}^\bba_j*f(z),
\end{align}
where the convolution is understood in the distributional sense and by scaling, one has
\begin{align}\label{SX4}
\check{\phi}^\bba_j(z)=2^{(j-1)(2+\alpha)d}\check{\phi}^\bba_1(2^{(j-1)\bba}z),\ \ j\geq 1.
\end{align}
We  define the isotropic block operator $\cR_jf=\check\phi_j*f$ in $\mR^d$, where
\begin{align}\label{Cx9}
{\rm supp}(\phi_j)\subset\big\{\xi: 2^{j-1}\leq|\xi|\leq 2^{j+1}\big\},\ j\geq 1,\ {\rm supp}(\phi_0)\subset B_2.
\end{align}
Now we introduce the following anisotropic Besov spaces (cf. \cite[Chapter 5]{Tri06}).
\begin{definition}\label{bs}
Let $s\in\mR$, $q\in[1,\infty]$ and $\bbp\in[1,\infty]^2$. The  anisotropic Besov space is defined by
$$
\mathbf{B}^{s,q}_{\bbp;\bba}:=\left\{f\in \cS': \|f\|_{\mathbf{B}^{s,q}_{\bbp;\bba}}
:= \left(\sum_{j\geq0}\big(2^{ js}\|\cR^\bba_{j}f\|_{\bbp}\big)^q\right)^{1/q}<\infty\right\},
$$
where $\|\cdot\|_\bbp$ is defined by $\|f\|_{\mL^\bbp}:=\|f\|_{\mL_z^\bbp}:=\|f\|_{\bbp}:=\left(\int_{\mR^d}\|f(\cdot,v)\|_{p_x}^{p_v}\dif v\right)^{1/p_v}.$\\
 Moreover,  $\bB^{s,q}_{p}$ denotes the usual  isotropic Besov spaces
in $\mR^d$ in terms of isotropic block operators $\cR_j$. If there is no confusion, we shall write
\begin{align}
    \label{def:Bes-pq}\bB^s_{\bbp;\bba}:=\bB^{s,\infty}_{\bbp;\bba},\ \ \bB^s_{p}:=\bB^{s,\infty}_{p},\quad \bC^s:=\bB^{s,\infty}_{\infty}.
\end{align}
\end{definition}
\begin{definition}\label{def:bs-Ho}
For a function $f:\mR^{2d}\to\mR$,  the first-order difference operator is defined by
$$
\delta^{(1)}_hf(z):=\delta_hf(z):=f(z+h)-f(z),\ \ z, h\in\mR^{2d}.
$$
For $M\in\mN$, the $M$-order difference operator  is defined recursively by
$
\delta^{(M+1)}_hf(z)=\delta_h\circ\delta^{(M)}_hf(z).
$
\end{definition}
The relation among the introduced Besov spaces and differential operator can be illustrated via the following  known results (cf. \cite{ZZ21} and \cite[Theorem 2.7]{HZZZ22}).
\begin{proposition}
   For $s>0$, $q\in[1,\infty]$ and $\bbp\in[1,\infty]^2$, an equivalent norm of $\bB^{s,q}_{\bbp;\bba}$ is given by
\begin{align}\label{CH1}
\|f\|_{\bB^{s,q}_{\bbp;\bba}}\asymp \left(\int_{\mR^{2d}}\left(\frac{\|\delta_h^{([s]+1)}f\|_{\bbp}}{|h|^s_\bba}\right)^q\frac{\dif h}{|h|^{4
d}_\bba}\right)^{1/q}+\|f\|_{\bbp}=:[f]_{\bB^{s,q}_{\bbp;\bba}}+\|f\|_{\bbp},
\end{align}
where $[s]$ is the integer part of $s$. In particular, $\bC^s_{\bba}:=\mathbf{B}^{s,\infty}_{\infty;\bba}$ is the anisotropic
H\"older-Zygmund space, and for $s\in(0,1)$, there is a constant $C=C(\alpha,d,s)>0$ such that
\begin{align*}
  \|f\|_{\bC^s_{\bba}} \asymp_C\|f\|_\infty+\sup_{z\not= z'}|f(z)-f(z')|/|z-z'|^{s}_\bba,
\end{align*}
\end{proposition}

\subsection*{Some conventions}
On finite dimensional vector spaces we always use the Euclidean norm.

Without confusion we sometimes also use $<,=, \leq$ and  $>,\geq$ between vectors if their each component shares the same order, e.g. we say $\bbq:=(q_i)_{i\geq 1}\leq \bbp:=(p_i)_{i\geq 1}$ if $q_i\leq p_i$ for each $i$.

In proofs, the notation $a\lesssim b$ abbreviates the existence of $C>0$ such that $a\leq C b$, such that moreover $C$ depends only on the parameters claimed in the corresponding statement. If the constant depends on any further parameter $c$, we incorporate it in the notation by writing $a\lesssim_c b$.
\subsection{Main results}
Our main assumption and result can be stated as follows.
\begin{assumption}
    \label{ass:main1}
 $b\in\bC_x^{\frac{\alpha+\beta}{1+\alpha}}\cap\bC_\bba^\beta$ with $\beta\in(1-\frac{\alpha}{2},(\alpha-1)(1+\alpha))$, $\alpha\in(1,2)$.
\end{assumption}
As a direct application of \cite[Theorem 4.1 (ii)]{HRZ} we know that under Assumption \ref{ass:main1} there exists a unique strong solution  $(Z_t)_{t\in[0,1]}$  to \eqref{eq:SDE}. It  can be approximated by $(Z^n_t)_{t\in[0,1]}$  from  \eqref{eq:SDE-EM} in the following sense.
\begin{theorem}
    \label{thm:main-rough}
        Let $(Z_t)_{t\in[0,1]}$ and $(Z^n_t)_{t\in[0,1]}$  be the solutions corresponding to \eqref{eq:SDE} and \eqref{eq:SDE-EM}.
       If   Assumption \ref{ass:main1}  holds, 
         then  we have  for any $m\in(2,\infty)$ and  $\eps>0$ being small, there exists a constant $N=N(\|b\|_{\mC_\bba^\beta}, \|b\|_{\mC_x^{\frac{\alpha+\beta}{1+\alpha}}}, m,\alpha,\beta,\epsilon)$ so that
         \begin{align}
             \label{est:thm-EM-s}
             \big\|  \sup_{t\in[0,1]}|Z_t-Z_t^n|\big\|_{L_m(\Omega)}\leq Nn^{-(\frac{1}{2}+\frac{\beta}{\alpha(1+\alpha)}\wedge \frac{1}{2})+\epsilon}. 
         \end{align}
\end{theorem}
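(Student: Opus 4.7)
\medskip

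\textbf{Proof plan.} I would prove \cref{thm:main-rough} by decomposing the error $Z_t-Z_t^n$ into three pieces matching exactly the objects $S_1,S_2,S_3$ announced in the introduction, applying the per-piece bounds from \cref{sec:app-term}, and closing the resulting quasi-linear inequality by a Grönwall-type bootstrap on short sub-intervals of $[0,1]$. Setting $M_t:=(\int_0^t L_r\,dr,\,L_t)$, $\varphi_t:=Z_t-M_t$, $\phi_t:=Z_t^n-M_t$, and $\psi_t:=\Gamma_{t-k_n(t)}\phi_{k_n(t)}$, the identity $\Gamma_r b(z)=b(\Gamma_r z)$ together with the kinetic structure of \eqref{eq:SDE}--\eqref{eq:SDE-EM} yields
\[
Z_t-Z_t^n=\int_0^t G_{t,r}\bigl(\Delta^{(1)}_r+\Delta^{(2)}_r+\Delta^{(3)}_r\bigr)\,dr,\qquad G_{t,r}h:=((t-r)h,\,h),
\]
with $\Delta^{(1)}_r:=b(M_r+\varphi_r)-b(M_r+\phi_r)$, $\Delta^{(2)}_r:=b(M_r+\phi_r)-b(M_r+\psi_r)$, and $\Delta^{(3)}_r:=b(M_r+\psi_r)-\Gamma_{r-k_n(r)}b(M_{k_n(r)}+\phi_{k_n(r)})$, so that $\int_S^T\Delta^{(i)}_r\,dr$ is precisely the random variable inside $S_i$.

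Next I would invoke the three auxiliary bounds of \cref{sec:app-term}. The estimate on $S_1$ should be of quasi-Lipschitz type,
\[
\|S_1\|_{L^m(\Omega)|\cF_S}\lesssim(T-S)^{\kappa_1}\,\sup_{r\in[S,T]}\|Z_r-Z_r^n\|_{L^m(\Omega)|\cF_S},
\]
with $\kappa_1>0$ supplied by the regularization-by-noise effect---concretely, by applying the John-Nirenberg inequality of \cite{Le2022} to the anisotropic kinetic semigroup of $M$, which compensates the non-Lipschitzness of $b$. The bound on $S_2$ is of pure numerical type, since $|\psi_r-\phi_r|$ tracks only a single discretization step of the scheme. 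The central bound is that on $S_3$: one applies the stochastic sewing lemma of \cite{Le} to the germ $A_{s,t}:=\int_s^t\Delta^{(3)}_r\,dr$, whose two hypotheses are verified via the anisotropic semigroup estimates on $M$ collected in \cref{sec:auxi-tool} together with \cref{ass:main1}. Both $S_2$ and $S_3$ should admit a bound of the form $(T-S)^{\kappa_2}n^{-(\frac{1}{2}+\frac{\beta}{\alpha(1+\alpha)}\wedge\frac{1}{2})+\eps}$, where the universal SSL gain $\frac{1}{2}$ and the anisotropic Hölder factor $\frac{\beta}{\alpha(1+\alpha)}$ (the latter reflecting the $x$-scaling exponent $1+\alpha$ against the stable scaling $\alpha$) combine.

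Combining the three, one obtains for every pair of stopping times $S\le T\le 1$
\[
\sup_{r\in[S,T]}\|Z_r-Z_r^n\|_{L^m(\Omega)|\cF_S}\le C(T-S)^{\kappa_1}\sup_{r\in[S,T]}\|Z_r-Z_r^n\|_{L^m(\Omega)|\cF_S}+C\,n^{-(\frac{1}{2}+\frac{\beta}{\alpha(1+\alpha)}\wedge\frac{1}{2})+\eps},
\]
so that choosing $T-S$ so small that $C(T-S)^{\kappa_1}\le 1/2$ absorbs the first term on the right, while finitely many such windows cover $[0,1]$ and deliver \eqref{est:thm-EM-s}. Moving the supremum $\sup_{t\in[0,1]}$ inside the $L^m$-norm is either built into the auxiliary bounds themselves (since $S_1,S_2,S_3$ are stated for arbitrary $S<T$) or obtained via a short Kolmogorov-Chentsov argument based on standard moment estimates on the increments of $M$, $Z$, and $Z^n$. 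The hard part will clearly be the $S_3$ estimate: one has to verify the sewing conditions in the anisotropic kinetic setting, carefully tracking the action of $\Gamma_{r-k_n(r)}$ on the kinetic semigroup $P_{t-r}f$ in both $x$- and $v$-directions, and it is precisely there that the full strength of $b\in\mC_x^{(\alpha+\beta)/(1+\alpha)}\cap\mC_\bba^\beta$ must be used. The arbitrarily small $\eps$-loss is the usual artefact of invoking a Besov embedding at slightly subcritical regularity so as to accommodate arbitrary $m>2$.
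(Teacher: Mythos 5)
Your overall architecture is the paper's: the same three-term decomposition into $S^{b,n,1},S^{b,n,2},S^{b,n,3}$, per-piece bounds from \cref{sec:app-term}, and a buckling argument over short windows covering $[0,1]$. However, there is a genuine gap exactly at the step you wave away: passing the supremum $\sup_{t\in[0,1]}$ inside the $L^m(\Omega)$-norm. It is not ``built into the auxiliary bounds'' — those are conditional estimates at fixed pairs $(S,T)$, not maximal estimates. And a Kolmogorov--Chentsov argument ``based on standard moment estimates on the increments of $M$, $Z$, and $Z^n$'' cannot work: these processes are driven by an $\alpha$-stable noise with $\alpha\in(1,2)$, so their increments have no finite moments of order $m>2>\alpha$. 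One could try chaining on the difference $\cI^n=Z-Z^n$ (where the noise cancels and paths are Lipschitz), but that requires two-point increment bounds in $L^m$ at the full rate $n^{-\rho}$; the crucial $S^{b,n,3}$ estimate (\cref{lemm:est-S3}, \cref{cor:est-S3}, hence \cref{thm:qua}) is only an $L_2$-bound, and upgrading an $L^2$ maximal bound to $L^m$ by interpolating against the trivial pathwise bound $|\cI^n|\lesssim\|b\|_\infty$ degrades the rate to $n^{-2\rho/m}$. The paper resolves this precisely with the weighted John--Nirenberg inequality (\cref{lem:John}), applied at the end to $\cA=\cI^n$ with $\xi_t=\kappa^{1/2}(n^{-\rho}+|\cA_t|)+4\|b\|_\infty n^{-1}$: it converts conditional $L_1$-increment bounds into conditional $L_p$ bounds of the running supremum for every $p$, after which one absorbs the small factor $\kappa^{1/2}$ and iterates over $\lceil\kappa^{-1}\rceil$ windows. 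You instead spent John--Nirenberg on the $S_1$ bound, where the paper actually uses the shifted conditional stochastic sewing lemma (\cref{lem:SSL}); in fact all three pieces are sewing estimates, and $S^{b,n,2}$ is not ``pure numerical'' — it needs \cref{lem:est-on-Yn} and sewing to reach the rate $n^{-(\frac12+\frac{\beta}{\alpha}\wedge\frac12)+\eps}$ rather than the naive $n^{-\beta}$.

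A smaller but real defect: your closing inequality drops the starting value. The correct single-window estimate is of the form
\begin{align*}
\|\cI^n_T-\cI^n_S\|_{L_2(\Omega)|\cF_S}\leq \kappa^{\frac12}\big(n^{-\rho}+|\cI^n_S|\big)+4\|b\|_\infty n^{-1},
\end{align*}
as in \eqref{est:claim}, and the term $|\cI^n_S|$ must be transported across windows (with a separate treatment when $S$ is not a gridpoint, costing the extra $\|b\|_\infty n^{-1}$); as written, your inequality would at $r=S$ assert $|Z_S-Z^n_S|\lesssim n^{-\rho}$ for a single window far from the origin, which is only available after the induction, not before it. Fixing both points essentially forces you back onto the paper's route: sewing for all three pieces, then John--Nirenberg plus iteration for the maximal $L^m$ bound.
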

In the similar manner of \cite{BDG-Levy} we also have the following almost sure convergence.
\begin{corollary}\label{cor:a.s}
    Let $(Z_t)_{t\in[0,1]}$ and $(Z^n_t)_{t\in[0,1]}$  be the solutions corresponding to \eqref{eq:SDE} and \eqref{eq:SDE-EM}.
       If  Assumption \ref{ass:main1} holds, 
         then  we have  for any $m\in(2,\infty)$ and  $\eps>0$ being small, there exists a constant $N=N(\|b\|_{\mC_\bba^\beta}, \|b\|_{\mC_x^{\frac{\alpha+\beta}{1+\alpha}}}, m,\alpha,\beta,\epsilon)$ and an a.s. finite random variable $\Xi$ so that for a.e. $\omega\in\Omega$
         \begin{align}
             \label{est:thm-EM-a.s.}
           \sup_{t\in[0,1]}|Z_t(\omega)-Z_t^n(\omega)|\leq N\Xi(\omega) n^{-(\frac{1}{2}+\frac{\beta}{\alpha(1+\alpha)}\wedge \frac{1}{2})+\epsilon}.
         \end{align}
\end{corollary}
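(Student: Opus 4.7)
The plan is to deduce Corollary \ref{cor:a.s} from the $L^m$ bound of Theorem \ref{thm:main-rough} by a standard Chebyshev/Borel--Cantelli argument, following the pattern indicated by the paper's reference to \cite{BDG-Levy}. Fix $\epsilon>0$ as in the statement and choose any $\epsilon'\in(0,\epsilon)$. Write $\gamma:=\tfrac12+\tfrac{\beta}{\alpha(1+\alpha)}\wedge\tfrac12$ and $Y_n:=\sup_{t\in[0,1]}|Z_t-Z_t^n|$. Applied with the exponent $\epsilon'$ in place of $\epsilon$ and with $m\in(2,\infty)$ to be chosen below, Theorem \ref{thm:main-rough} yields
$$\mE[Y_n^m]\leq N^m\, n^{-m(\gamma-\epsilon')}.$$

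Next, I would invoke Chebyshev's inequality to estimate
$$\mP\bigl(n^{\gamma-\epsilon}Y_n>1\bigr)\leq \mE\bigl[(n^{\gamma-\epsilon}Y_n)^m\bigr]\leq N^m\, n^{-m(\epsilon-\epsilon')}.$$
Since Theorem \ref{thm:main-rough} holds for every $m\in(2,\infty)$, I can pick $m$ large enough that $m(\epsilon-\epsilon')>1$, so that the right-hand side becomes summable in $n$. The Borel--Cantelli lemma then implies that, almost surely, $n^{\gamma-\epsilon}Y_n\leq 1$ for all sufficiently large $n$.

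Finally, I would set
$$\Xi:=\sup_{n\geq 1}n^{\gamma-\epsilon}Y_n,$$
which is almost surely finite: the Borel--Cantelli step controls the tail of the sequence by $1$, while each of the finitely many initial terms is almost surely finite by the existence of the strong solution $Z$ and the pathwise construction of $Z^n$. Rearranging gives \eqref{est:thm-EM-a.s.} with this $\Xi$. The argument is entirely mechanical once the quantitative $L^m$ rate of Theorem \ref{thm:main-rough} is in hand, which is the only genuinely hard ingredient. The sole subtle point is that the constant $N$ in Theorem \ref{thm:main-rough} depends on $m$, but this dependence is harmlessly absorbed into the random variable $\Xi$; no obstacle is expected in this passage from $L^m$ to almost sure convergence.
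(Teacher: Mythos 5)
Your proposal is correct and follows essentially the same route as the paper: both proofs trade a small piece of the rate exponent from \cref{thm:main-rough} for summability in $n$ at a sufficiently high moment, and both define $\Xi$ as the supremum over $n$ of the rescaled error. The only mechanical difference is that the paper bounds $\mE\Xi^p$ directly by replacing the supremum over $n$ with a sum (which in fact yields the slightly stronger conclusion $\Xi\in L^p(\Omega)$), whereas your Chebyshev plus Borel--Cantelli step gives only the almost sure finiteness of $\Xi$ --- which is all the corollary asserts.
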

\begin{remark}
 When the noise is non-degenerate, \cite{BDG} shows the strong convergence of first order SDEs \eqref{eq:non-de} and its Euler scheme \eqref{eq:non-de-E} with rate $n^{-(\frac{1}{2}+\frac{\beta}{\alpha}\wedge \frac{1}{2})+\epsilon}$ for $b\in C^\beta(\mR^d)$. For degenerate noise, there are only limited works on Brownian type noise showing either non-quantitative convergence \cite{DF, LM} or slower-rate convergence  \cite{LS18}. As the early try of applying the Euler scheme to singular kinetic SDEs, \cite{HLL} shows that the type of scheme like \eqref{eq:SDE-EM} with taking $\alpha=2$ (i.e. $(L_t)_{t\geq0}$ actually is Brownian motion), therein obtained convergence rate $n^{-\frac{1}{2}+}$ is comparable with non-degenerate case. Apparently
 this also happens here.
\end{remark}

\section{Auxiliary Tools}\label{sec:auxi-tool}
\subsection{Conditional shifted Stochastic Sewing Lemma and weighted John-Nirenberg Inequality} For $S<T$ we denote $[S,T]_{\leq}^i=\left\{(t_0,\ldots,t_{i-1})\in[S,T]^i:\,t_0\leq \cdots\leq t_{i-1}\right\}$, $i\in\mN$,
and $\widehat{[S,T]}_{\leq}^2=\left\{(s,t)\in[S,T]_{\leq}^2:\,|s-t|\leq S\right\}$.
For $0\leq S\leq T$ denote $ \Delta_{[S,T]}:=\{(s,t):S\leq s<t\leq T,s-(t-s)\geq S\}.$
For a function $f: {[S,T]}\rightarrow\mR^{d}$ and for $(s,u,t)\in[S,T]^3_{\leq}$ let $\delta f_{s,u,t}:=f_{s,t}-f_{s,u}-f_{u,t}$.  Denote $\mE^s$ the conditional expectation given $\cF_s$,  $\|X\|_{L_p(\Omega)|\sG}:=(\mE[|X|^p|\sG])^\frac{1}{p}$ for any random variable $X$ and sub-$\sigma$ algebra $\sG\subset\cF$.
\begin{lemma}\cite[Lemma 3.1]{BDG-Levy} \label{lem:SSL}
Let $(S,T)\in[0,1]^2$. Let $p\in[2,\infty)$.   Let
$(A_{s,t})_{(s,t)\in \Delta_{[S,T]}}$ be a family of $\R^d$-valued random variables in $L_p(\Omega)$
such that $A_{s,t}$ is $\mathcal{F}_t$-measurable for all $(s,t)\in \Delta_{[S,T]}$.   Let $\cG\subset\cF_S$ be an $\sigma$-algebra. Suppose that there exist constants $\epsilon_1,\epsilon_2>0$ and constants $K_1, K_2\in[0,\infty)$,  such that the following hold:
\begin{enumerate}[(1)]
   \item $\|A_{s,t}\|_{L_p(\Omega)|\cG}\leq K_1|t-s|^{\frac{1}{2}+\epsilon_1}$, $(s,t)\in\Delta_{[S,T]}$,
   \item $\|\E^{s-(t-s)}\delta A_{s,u,t}\|_{L_p(\Omega)|\cG}\leq K_2|t-s|^{1+\epsilon_2}$, $(s,t)\in\Delta_{[S,T]}$ and $u=\frac{s+t}{2}$,
   \item there exists a  process $\mathcal{A}:[S,T]\mapsto L^p(\Omega;\R^d)$  such that  for any $(s,t)\in [S,T]_\leq^2$ one has
   \begin{align}
       \label{con:SSL3}
       \cA_t-\cA_s=\lim_{m\rightarrow\infty}\sum_{i=1}^{m-1}A_{s+i\frac{t-s}{m},s+(i+1)\frac{t-s}{m}} \quad\text{ in probability}.
   \end{align}
\end{enumerate}
 Then there exists constants $\hat K_1,\hat K_2>0$ which depend only on $\epsilon_1,\epsilon_2,\epsilon_3, d,p$ so that for any $(s,t)\in[S,T]_{\leq}^2$,
  \begin{align}
       \label{con:SSL-est}
       \|\cA_{t}-\cA_{s}\|_{L_p(\Omega)|\cG}\leq K_1\hat K_1|t-s|^{\frac{1}{2}+\epsilon_1}+K_2\hat K_2|t-s|^{1+\epsilon_2}.
   \end{align}
\end{lemma}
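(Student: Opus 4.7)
The plan is to adapt L\^e's stochastic sewing lemma to the present shifted and conditional framework, closely following the strategy of \cite{BDG-Levy}. Fix $(s,t)\in[S,T]_{\leq}^2$ and, for each $n\geq 0$, introduce the uniform dyadic grid $t^n_i:=s+i(t-s)/2^n$ with midpoints $u^n_i:=(t^n_i+t^n_{i+1})/2$, and set the Riemann sum $\cA^n_{s,t}:=\sum_{i=0}^{2^n-1}A_{t^n_i,t^n_{i+1}}$. The telescoping identity
\[
\cA^{n+1}_{s,t}-\cA^n_{s,t}=-\sum_{i=0}^{2^n-1}\delta A_{t^n_i,u^n_i,t^n_{i+1}}
\]
reduces the task to a uniform-in-$n$ bound on these dyadic differences, after which hypothesis (3) will identify $\lim_n\cA^n_{s,t}$ with $\cA_t-\cA_s$.

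For $n$ large enough that every sub-interval $[t^n_i,t^n_{i+1}]$ lies in $\Delta_{[S,T]}$ (automatic once $2^{-n}(t-s)\leq s-S$; the finitely many early levels are absorbed into the constants), I decompose $\delta A_{t^n_i,u^n_i,t^n_{i+1}}=\E^{t^n_{i-1}}\delta A_{t^n_i,u^n_i,t^n_{i+1}}+f^n_i$ with $t^n_{i-1}:=2t^n_i-t^n_{i+1}$, so that hypothesis (2) applies and bounds the $\E^{t^n_{i-1}}$-part termwise; summing over $i$ yields a contribution of order $K_2(t-s)^{1+\epsilon_2}2^{-n\epsilon_2}$. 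The centered remainder $f^n_i$ is handled via a parity split: for even $i=2k$, $f^n_{2k}$ is $\cF_{t^n_{2k+1}}$-measurable with $\E^{t^n_{2k-1}}f^n_{2k}=0$, so $(f^n_{2k})_k$ is a martingale difference sequence relative to the sub-filtration $(\cF_{t^n_{2k+1}})_k$, and similarly for odd $i$. Because $\cG\subset\cF_S$ is coarser than every filtration involved, conditional Burkholder--Davis--Gundy yields
\[
\Big\|\sum_{k}f^n_{2k}\Big\|_{L^p(\Omega)|\cG}\lesssim_p\Big(\sum_k\|f^n_{2k}\|_{L^p(\Omega)|\cG}^2\Big)^{1/2}\lesssim K_1(t-s)^{1/2+\epsilon_1}2^{-n\epsilon_1},
\]
where I use hypothesis (1) together with the contractivity of conditional expectation on $\|\cdot\|_{L^p(\Omega)|\cG}$ to estimate each $\|f^n_i\|_{L^p(\Omega)|\cG}$.

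Summing the two geometric bounds over $n\geq 0$ and combining with the trivial estimate $\|\cA^0_{s,t}\|_{L^p(\Omega)|\cG}=\|A_{s,t}\|_{L^p(\Omega)|\cG}\leq K_1(t-s)^{1/2+\epsilon_1}$ from hypothesis (1), I obtain a bound on $\cA^n_{s,t}$ of the form \eqref{con:SSL-est} that is uniform in $n$. Extracting an almost-surely convergent subsequence from the convergence in probability in hypothesis (3) and applying conditional Fatou then transfers the bound to $\cA_t-\cA_s$. The main technical point, and the distinctive feature compared to the classical non-shifted SSL, is that successive centered increments $f^n_i$ become martingale differences only after the parity split: this is exactly where the assumption $(s,t)\in\Delta_{[S,T]}$ built into hypothesis (2) plays its role, and it is the step I expect to require the most care when adapting the standard proof.
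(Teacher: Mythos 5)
The paper does not prove this lemma at all --- it is imported verbatim from \cite[Lemma 3.1]{BDG-Levy} --- so the comparison is really with the proof in that reference, whose strategy (dyadic telescoping, hypothesis (2) for the conditionally-centered part, a parity split producing two martingale difference sequences handled by conditional BDG, and hypothesis (3) to identify the limit) you have correctly reconstructed in outline. The parity split and the contractivity of $\|\cdot\|_{L^p(\Omega)|\cG}$ under conditional expectation are exactly the right ingredients.

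However, there is a genuine gap in how you handle the constraint that $A_{s,t}$ is only defined, and hypotheses (1)--(2) only hold, on $\Delta_{[S,T]}=\{s-(t-s)\geq S\}$. You work with the full dyadic sums $\cA^n_{s,t}=\sum_{i=0}^{2^n-1}A_{t^n_i,t^n_{i+1}}$ and dismiss the problematic levels by requiring $2^{-n}(t-s)\leq s-S$, "absorbing the finitely many early levels into the constants". This fails in two ways. First, when $s=S$ (precisely the case needed in the applications, e.g.\ the bound on $\int_S^T$ in \cref{lem:auxli-S1S2}, and included in the stated conclusion for all $(s,t)\in[S,T]_{\leq}^2$), the condition $2^{-n}(t-s)\leq s-S$ never holds, so the term $A_{t^n_0,t^n_1}$ is undefined for every $n$ and your Riemann sums do not exist. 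Second, even for $s>S$, the number of "early levels" depends on $(t-s)/(s-S)$; starting the telescoping at a level $n_0$ of this size and bounding $\cA^{n_0}$ termwise by (1) costs a factor $2^{n_0(1/2-\epsilon_1)}$, so the resulting constants blow up as $s\downarrow S$ and cannot depend only on $\epsilon_1,\epsilon_2,d,p$ as \eqref{con:SSL-est} requires. Note also that \eqref{con:SSL3} is a limit of sums starting at $i=1$, not $i=0$, which is not an accident: the correct argument (as in \cite{BDG-Levy}) runs the dyadic scheme on Riemann sums that omit the first subinterval, so that every pair and every midpoint triple that appears lies in $\Delta_{[S,T]}$ (since $t^n_{i-1}\geq s\geq S$ for $i\geq1$); each refinement then produces one extra boundary term $A_{s+2^{-n-1}(t-s),\,s+2^{-n}(t-s)}$ coming from the shrinking skipped interval, and these are summable by hypothesis (1). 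Alternatively, one can first prove the estimate for $(s,t)\in\Delta_{[S,T]}$ (where your full-sum argument is sound) and then extend to general $(s,t)$ by telescoping along $s_k=s+2^{-k}(t-s)$, noting that each pair $(s_{k+1},s_k)$ satisfies $2s_{k+1}-s_k=s\geq S$ and summing the geometric series. Some such device is indispensable, and it is exactly the step your proposal is missing.
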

\begin{lemma}
    \cite[Proposition 3.2]{BDG-Levy}\label{lem:John}
    Let $0\leq S\leq T$ and let $\cA:\Omega\times[S,T]\mapsto\mR^d$, $\xi:\Omega\times [S,T]\mapsto\mR_+$ be  processes adapted to $(\cF_t)_{t\in[S,T]}$. Further assume  $(\cA_t)_{t\in[S,T]}$ to be continuous and integrable.  Suppose that for $(s,t)\in[S,T]_\leq^2$ one has
    \begin{align}\label{con:John}
    \mE^s|\cA_t-\cA_s|\leq \xi_s\quad a.s.
    \end{align}
    Then for any $p\geq1$ there exists a constant $N=N(p)$ such that for any $(s,t)\in[S,T]_\leq^2$ one has
    \begin{align}\label{est:lem-John}
        \|\sup_{r\in[s,t]}|\cA_t-\cA_s|\|_{L_p(\Omega)|\cF_s}\leq N \|\sup_{r\in[s,t]}\xi_r\|_{L_p(\Omega)|\cF_s}.
    \end{align}

\end{lemma}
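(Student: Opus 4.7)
The statement is a conditional John--Nirenberg (Lenglart--type) inequality that promotes the BMO--like endpoint control \eqref{con:John} to an $L^p$ bound on the pathwise supremum. The plan has three steps: extend the hypothesis to stopping times, derive a weak--type bound on the supremum, and bootstrap to $L^p$ by a good--$\lambda$ inequality.

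\emph{Step 1: Stopping-time extension.} I would first show that \eqref{con:John} persists along stopping times, i.e.\ for any $[S,T]$-valued stopping times $\sigma\leq\tau$ one has $\mE[\,|\cA_\tau-\cA_\sigma|\,|\,\cF_\sigma]\leq\xi_\sigma$ almost surely. This is standard: approximate $\sigma$ from above by simple stopping times taking finitely many values $s_i$ on atoms $A_i\in\cF_{s_i}$, apply \eqref{con:John} on each atom together with the tower property, and pass to the limit using the continuity and integrability of $\cA$ assumed in the statement.

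\emph{Step 2: Weak-type bound and good-$\lambda$.} Set $Y^{*}:=\sup_{r\in[s,t]}|\cA_r-\cA_s|$ and $M:=\sup_{r\in[s,t]}\xi_r$. For $c>0$ define the stopping time $\rho_c:=\inf\{r\in[s,t]:|\cA_r-\cA_s|>c\}\wedge t$. By continuity of $\cA$, on $\{\rho_c<t\}$ one has $|\cA_{\rho_c}-\cA_s|=c$, hence the triangle inequality forces
\[
\{Y^{*}>2c\}\subseteq\{|\cA_t-\cA_s|\geq c\}\cup\{\rho_c<t,\;|\cA_t-\cA_{\rho_c}|\geq c\}.
\]
Applying conditional Markov at $s$ (using \eqref{con:John}) and at $\rho_c$ (using Step 1) followed by the tower property yields the weak-type estimate
\[
\mP\bigl(Y^{*}>2c\,\big|\,\cF_s\bigr)\leq \frac{2\,\mE^s[M]}{c}.
\]
To upgrade to $L^p$ for $p\geq 1$, I would refine this into the good--$\lambda$ bound
\[
\mP\bigl(Y^{*}>2c,\;M\leq \delta c\,\big|\,\cF_s\bigr)\leq N\delta\,\mP\bigl(Y^{*}>c\,\big|\,\cF_s\bigr),\qquad \delta\in(0,1),
\]
by rerunning the same stopping-time argument on $[\rho_c,t]$ under the extra constraint $\xi_{\rho_c}\leq M\leq\delta c$, which inserts the small factor $\delta$ after one Markov step. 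Integrating both sides against $p\,c^{p-1}\dif c$ on $(0,\infty)$, combining with the trivial bound $\mP(M>\delta c\mid\cF_s)$ on the complementary event, and absorbing the $Y^{*}$ term on the left for $\delta$ chosen with $2^{p+1}N\delta\leq 1/2$ gives
\[
\|Y^{*}\|_{L^p(\Omega)|\cF_s}\leq N_p\,\|M\|_{L^p(\Omega)|\cF_s},
\]
which is \eqref{est:lem-John}.

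\emph{Main obstacle.} The delicate point is the good--$\lambda$ step, since $M$ is $\cF_t$-measurable and cannot be used directly inside the threshold of a stopping time measurable at $\rho_c$. I would handle this by substituting the adapted running envelope $\xi^{*}_r:=\sup_{u\in[s,r]}\xi_u$ in place of $M$ in the threshold (absorbing a single constant since $\xi^{*}_t=M$) and by a preliminary truncation $Y^{*}\wedge K$, sent to infinity by monotone convergence after the absorption. Continuity of $\cA$ is crucial to ensure the exact equality $|\cA_{\rho_c}-\cA_s|=c$ on $\{\rho_c<t\}$; without it, the triangle decomposition above and hence the gain of the small factor $\delta$ in the good--$\lambda$ inequality would fail.
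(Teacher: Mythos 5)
There is a genuine gap, and it sits exactly where you flagged it: the good-$\lambda$ step. (A preliminary, fixable slip: with $\rho_c$ defined at level $c$, your inclusion $\{Y^*>2c\}\subseteq\{|\cA_t-\cA_s|\ge c\}\cup\{\rho_c<t,\ |\cA_t-\cA_{\rho_c}|\ge c\}$ is false — a path rising to $2.5c$ and returning to $0.9c$ at time $t$ lies in neither set; the weak-type bound should instead use the stopping time at level $2c$ and the triangle inequality through $t$.) The decisive problem is that the claimed inequality $\mP(Y^*>2c,\ M\le\delta c\mid\cF_s)\le N\delta\,\mP(Y^*>c\mid\cF_s)$ cannot be produced by ``rerunning the stopping-time argument on $[\rho_c,t]$''. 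The factor $\delta$ would have to come from a bound on $\mP\big(\sup_{r\in[\rho_c,t]}|\cA_r-\cA_{\rho_c}|>c\mid\cF_{\rho_c}\big)$ on the relevant event, and the only estimate the hypothesis \eqref{con:John} yields there is the conditional weak-type bound whose right-hand side is $c^{-1}\,\mE\big[\sup_{r\in[\rho_c,t]}\xi_r\mid\cF_{\rho_c}\big]$, i.e.\ a conditional expectation of the \emph{future} supremum of $\xi$. Neither the event $\{M\le\delta c\}$ (not $\cF_{\rho_c}$-measurable) nor your substitute $\{\xi^*_{\rho_c}\le\delta c\}$ (which constrains only the past of $\xi$) makes this quantity of order $\delta$; and you cannot ``truncate inside'', since $\mE^u|\cA_v-\cA_u|\le\xi_u$ gives no control of $\mE^u\big[|\cA_v-\cA_u|\mathbf{1}_{\{\sup\xi\le\delta c\}}\big]$ — conditional expectations see all continuations, including those where $\xi$ becomes large. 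This is precisely the predictability restriction in Lenglart-type domination: for a merely adapted dominator only the weak-type estimate (hence $L^p$ bounds for $p<1$) survives, while the good-$\lambda$ upgrade needs a predictable/increasing dominator or a different mechanism. So as written the proposal does not establish \eqref{est:lem-John} for $p\ge1$.

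Two further remarks. Your Step 1 in its stated generality (both endpoints stopping times, with the same $\xi_\sigma$) also does not follow from the fixed-time hypothesis by the routine atom/approximation argument; that argument only gives a stopping time on the left with a \emph{deterministic} right endpoint — fortunately this weaker version is all your Step 2 uses. Note also that the paper itself does not prove this lemma: it quotes \cite[Proposition 3.2]{BDG-Levy}, whose proof rests on L\^e's quantitative John--Nirenberg inequality rather than a good-$\lambda$ argument. The working route there is: first treat $\xi$ dominated by an $\cF_s$-measurable constant $\Gamma$, iterate stopping times $\tau_{k+1}:=\inf\{r\ge\tau_k:\ |\cA_r-\cA_{\tau_k}|\ge K\Gamma\}\wedge t$, use the conditional Chebyshev/weak-type bound at each $\tau_k$ to get geometric decay $\mP(\tau_k<t\mid\cF_s)\le 2^{-k}$ and hence exponential (so all $L^p$) moments of the supremum, and then reduce the adapted-$\xi$ statement to this case. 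If you want to salvage your write-up, that iteration — not a good-$\lambda$ inequality — is the missing ingredient.
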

\subsection{Estimates on the semi-group}
First, we 
notice that the process $L$ satisfies the following scaling  property: there exists $C(\alpha, p,\epsilon)$ so that
  for any $p\in(0,\alpha)$ and $\epsilon>0$, one has
  \begin{align*}
  \mE[|L_t|^p\wedge 1]\leq Ct^{\frac{p}{\alpha}-\epsilon},\quad t\in(0,1],
  \end{align*}
  which implies that for any $p\geq1, \gamma>0$  there exists $C(p,\epsilon,\gamma,\alpha)$ so that
    \begin{align}
      \label{con:noise-alpha}\big\||L_t|^\gamma\wedge 1\big\|_{L_p(\Omega)}\leq Ct^{\frac{\gamma}{\alpha}\wedge\frac{1}{p}-\epsilon},\quad t\in(0,1].
  \end{align}
 Denote 
\begin{align}M_t:=\left(\begin{smallmatrix}\int_0^t L_r dr\\L_t
    \end{smallmatrix}\right),
    \label{def:noise}
\end{align}
it satisfies
\begin{align}M_t-\Gamma_{t-s}M_s:=\left(\begin{smallmatrix}\int_0^t L_r dr-\int_0^s L_r dr-(t-s)L_s\\L_t-L_s
    \end{smallmatrix}\right)=\left(\begin{smallmatrix}\int_s^t L_r -L_sdr\\L_t-L_s
    \end{smallmatrix}\right)\overset{(d)}{=}\left(\begin{smallmatrix}\int_0^{t-s} L_rdr\\L_{t-s}
    \end{smallmatrix}\right)=M_{t-s}
    \label{prop:Markov}
\end{align}
and $M_t-\Gamma_{t-s}M_s$ is independent of $\cF_s$. 
Notice that
\begin{align*}
  P_tf(z):=\mE f(x+tv+\int_0^tL_r\dif r,v+L_t)=\mE f(M_t(\Gamma_tz)),\qquad t\geq0.
\end{align*}
Moreover \eqref{prop:Markov} implies that  for any bounded measurable function $f:\mR^{2d}\rightarrow \mR$, $(s,t)\in[0,1]_\leq^2$ and any $\cF_s$-measurable  $\mR^{2d}$-valued random variable $\xi$
\begin{align}
    \label{eq:inde-incre} \mE^sf(M_t+\Gamma_t\xi)=P_{t-s}f(M_s+\Gamma_{s}\xi),\quad i.e. \quad  \mE^sf(M_t+\xi)=P_{t-s}f(M_s+\Gamma_{s-t}\xi).
\end{align}
 Let $(\rho_t)_{t\geq0}$ denote the density of $(M_t)_{t\geq0}$ (see more details in \cite[Section 2.2]{HRZ}). One has
 \begin{align}\label{CC01}
P_tf(z)=\Gamma_t \rho_t*\Gamma_tf(z)=\Gamma_t(\rho_t*f)(z),\quad z=(x,v)\in\mR^{2d}.
\end{align}
 One also sees  the  scaling  \cite[(2.21)]{HRZ}
\begin{align}\label{828:00}
\Gamma_t\rho_t(x,v)=t^{-\frac{2d}{\alpha}-d}\rho_1(t^{-\frac{1}{\alpha}-1}x+t^{-\frac{1}{\alpha}}v,t^{-\frac{1}{\alpha}}v)=t^{-\frac{2d}{\alpha}-d}\Gamma_1\rho_1(t^{-\frac{1}{\alpha}-1}x,t^{-\frac{1}{\alpha}}v).
\end{align}

For the operator $\Gamma_t, t\geq0$, we get from \cite[Lemma 3.7]{HLL} \footnote{ $\alpha=2$ in \cite[Lemma 3.7]{HLL}. Going along the same lines of the proof of \cite[Lemma 3.7]{HLL} but replacing index $3$   by $\alpha+1$ generalizes \cite[Lemma 3.7]{HLL} to \eqref{0222:00}.} that
 for $\beta>0$ and $T>0$, for any $\bbp\in[1,\infty]^2$ and $q\in[1,\infty]$, there is a constant $C=C(d,\bbp,T,q)>0$ such that for all $t\in[0,T]$ and $f\in\bB^{(1+\alpha)\beta,q}_{\bbp;\bba}$
\begin{align}\label{0222:00}
    \|\Gamma_tf\|_{\bB^{\beta,q}_{\bbp;\bba}}\le C(\|f\|_{\bB^{\beta,q}_{\bbp;\bba}}+t^\beta\|f\|_{\bB^{(1+\alpha)\beta,q}_{\bbp;\bba}}).
\end{align}
      \begin{lemma}
       For any  $1\geq {\beta_x}, {\beta_v}\ge0$ so that $\frac{\beta_x}{1+\alpha},\beta_v<\alpha-1$ and $m,n\in\mN$ so that $m+n\leq1$, 
       there exist constants $C=C(d,{\beta_x},{\beta_v},m,n)$ such that for any $t\in(0,1]$,
\begin{align}
\label{CC03+}
&\big\||x|^{\frac{\beta_x}{1+\alpha}}|v|^{\beta_v}|\nabla_x^m\nabla_v^n\partial_t(\Gamma_t\rho_t)(x,v)|\big\|_{1}\le C  t^{\frac{\beta_x-(\alpha+1)m+(\beta_v-n)-\alpha}{\alpha}
}.
\end{align}
      \end{lemma}
\begin{proof}
It follows from the scaling \eqref{828:00} that
    \begin{align*}
        \partial_t(\Gamma_t\rho_t)(x,v)=&(-\frac{2d}{\alpha}-d)t^{-\frac{2d}{\alpha}-d-1}\Gamma_1\rho_1(t^{-\frac{1}{\alpha}-1}x,t^{-\frac{1}{\alpha}}v)\\
        &+(-\frac{1}{\alpha}-1)t^{-\frac{1}{\alpha}-2}t^{-\frac{2d}{\alpha}-d}x\cdot \nabla_x \Gamma_1\rho_1(t^{-\frac{1}{\alpha}-1}x,t^{-\frac{1}{\alpha}}v)\\
        &+(-\frac{1}{\alpha})t^{-\frac{1}{\alpha}-1}t^{-\frac{2d}{\alpha}-d}v\cdot \nabla_v \Gamma_1\rho_1(t^{-\frac{1}{\alpha}-1}x,t^{-\frac{1}{\alpha}}v).
    \end{align*}
Let
   \begin{align*}
       G(x,v):=(-\frac{2d}{\alpha}-d)\Gamma_1\rho_1(x,v)+(-\frac{1}{\alpha}-1)x\cdot \nabla_x \Gamma_1\rho_1(x,v)+(-\frac{1}{\alpha})v\cdot \nabla_v \Gamma_1\rho_1(x,v),
   \end{align*}
   one sees that
   \begin{align*}
       \partial_t(\Gamma_t\rho_t)(x,v)=t^{-\frac{2d}{\alpha}-d-1}G(t^{-\frac{1}{\alpha}-1}x,t^{-\frac{1}{\alpha}}v),
   \end{align*}
   which yields
   \begin{align*}
&|x|^\frac{\beta_x}{1+\alpha}|v|^{\beta_v}\nabla_x^m\nabla_v^n\partial_t(\Gamma_t\rho_t)(x,v)\\&=t^{-\frac{2d}{\alpha}-d-1+[\beta_x-(\alpha+1)m+(\beta_v-n)]/\alpha}|t^{-\frac{1}{\alpha}-1}x|^\frac{\beta_x}{1+\alpha}|t^{-\frac{1}{\alpha}}v|^{\beta_v} \nabla_x^m\nabla_v^nG(t^{-\frac{1}{\alpha}-1}x,t^{-\frac{1}{\alpha}}v).
   \end{align*}
   Therefore, we have
   \begin{align*}
&\big\||x|^\frac{\beta_x}{1+\alpha}|v|^{\beta_v}|\nabla_x^m\nabla_v^n\partial_t(\Gamma_t\rho_t)(x,v)|\big\|_{1}\\
&=t^{-\frac{2d}{\alpha}-d-1+[\beta_x-(\alpha+1)m+(\beta_v-n)]/\alpha} t^{\frac{\bba}{\alpha}\cdot \frac{d}{\bbp}}\big\||x|^\frac{\beta_x}{1+\alpha}|v|^{\beta_v}|\nabla_x^m\nabla_v^nG(x,v)|\big\|_{1}\\&\lesssim t^{\frac{\beta_x-(\alpha+1)m+(\beta_v-n)-\alpha}{\alpha}},
   \end{align*}
   provided that $\Gamma_1\rho_1$ is a smooth function and \cite[(5.5)]{HWZ20} (see also \cite[Theorem 1.1 (ii)]{HZ}). This completes the proof.
\end{proof}
\begin{lemma}
    \label{lem:est-semi-sob}
Let $\beta\in(0,\alpha-1)$.    For $f\in\bC_{\bba}^{(1+\alpha)\beta}$
we have for any $0\leq r\leq t\leq 1$,
     \begin{align}\label{est-semi-sob}
     \| \p_r\Gamma_r\rho_r*(\Gamma_tf)\|_{\infty}\lesssim &{\left(\|f\|_{\bC_{\bba}^{\beta}}+t^\beta\|f\|_{\bC_{\bba}^{(1+\alpha)\beta}} \right)}r^{\frac{\beta}{\alpha}-1};
     \\\label{est-semi-sob-1} \| \nabla\p_r\Gamma_r\rho_r*(\Gamma_tf)\|_{\infty}\lesssim &{\left(\|f\|_{\bC_{\bba}^{\beta}}+t^\beta\|f\|_{\bC_{\bba}^{(1+\alpha)\beta}} \right)}r^{\frac{\beta-1}{\alpha}-1}.
 \end{align}
 \end{lemma}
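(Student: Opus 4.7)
My plan rests on the fact that $\Gamma_r\rho_r$ is a probability density for every $r>0$, hence $\int_{\mR^{2d}}\p_r\Gamma_r\rho_r(z)\,dz=0$, and since $\nabla_v\p_r\Gamma_r\rho_r$ is a spatial total derivative of an integrable (in fact Schwartz) function, also $\int_{\mR^{2d}}\nabla_v\p_r\Gamma_r\rho_r(z)\,dz=0$. This cancellation trades the excessive singularity of the raw $L^1$ size of the kernel for a weighted $L^1$ bound compatible with the anisotropic H\"older regularity of $\Gamma_t f$, as quantified by \eqref{CC03+}.

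For \eqref{est-semi-sob} I first rewrite, using translation invariance together with the zero-mean property,
\begin{align*}
(\p_r\Gamma_r\rho_r \ast \Gamma_t f)(z)=\int_{\mR^{2d}} \p_r\Gamma_r\rho_r(z')\bigl[(\Gamma_t f)(z - z') - (\Gamma_t f)(z)\bigr]\,dz'.
\end{align*}
The anisotropic H\"older seminorm bounds the bracket by $\|\Gamma_t f\|_{\bC_\bba^\beta}|z'|_\bba^\beta$, and for $\beta\in(0,1)$ the sub-additivity $(a+b)^\beta\leq a^\beta+b^\beta$ yields $|z'|_\bba^\beta\leq|x'|^{\beta/(1+\alpha)}+|v'|^\beta$. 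Applying \eqref{CC03+} with $\bbp=(1,1)$, $m=n=0$, and $(\beta_x,\beta_v)$ equal either to $(\beta/(1+\alpha),0)$ or to $(0,\beta)$, both choices produce the same exponent $r^{\beta/\alpha-1}$. A final invocation of the shift estimate \eqref{0222:00} with $q=\infty$ and $\bbp=(\infty,\infty)$ converts $\|\Gamma_t f\|_{\bC_\bba^\beta}$ into $\|f\|_{\bC_\bba^\beta}+t^\beta\|f\|_{\bC_\bba^{(1+\alpha)\beta}}$, which is precisely \eqref{est-semi-sob}.

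For \eqref{est-semi-sob-1} I would repeat the argument with $\p_r\Gamma_r\rho_r$ replaced by $\nabla_v\p_r\Gamma_r\rho_r$: the same zero-mean trick still applies, and \eqref{CC03+} with $(m,n)=(0,1)$ yields the weighted $L^1$ bound $r^{(\beta-1)/\alpha-1}$ for both choices of weight, matching the target exponent. Under the anisotropic scaling \eqref{828:00} the $v$-gradient is one factor of $r^{1/\alpha}$ less singular than the $x$-gradient, so the stated estimate corresponds naturally to $\nabla=\nabla_v$.

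There is no real obstacle here; the whole argument is essentially bookkeeping of scaling exponents from \eqref{CC03+} and \eqref{0222:00}. The only delicate point worth checking is that the two pieces arising from the sub-additivity splitting of $|z'|_\bba^\beta$ contribute the same power of $r$, so that nothing is lost relative to the target bound—this is transparent from the formula in \eqref{CC03+} once one substitutes the two candidate weights.
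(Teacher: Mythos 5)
Your proof is correct and follows essentially the same route as the paper: the zero-mean cancellation of $\p_r\Gamma_r\rho_r$, the anisotropic H\"older increment bound with the splitting $|z'|_\bba^\beta\le|x'|^{\beta/(1+\alpha)}+|v'|^\beta$, the weighted kernel estimate \eqref{CC03+} with $\bbp=(1,1)$, and \eqref{0222:00} to convert $\|\Gamma_tf\|_{\bC_\bba^\beta}$ into $\|f\|_{\bC_\bba^{\beta}}+t^\beta\|f\|_{\bC_\bba^{(1+\alpha)\beta}}$. Your remark that the exponent $r^{\frac{\beta-1}{\alpha}-1}$ is the one produced by the velocity gradient is apt: the paper's proof of \eqref{est-semi-sob-1} also carries a $\nabla_x$ term, for which \eqref{CC03+} (with $m=1$, $n=0$) would only give $r^{\frac{\beta-1}{\alpha}-2}$, so interpreting $\nabla$ as $\nabla_v$, as you do, is what the anisotropic scaling actually supports.
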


 \begin{proof} Denote $F_t:=\Gamma_tf$.
Observe that
 $\int_{\mR^{2d}}\p_r\Gamma_r\rho_r(z')\dif z'=0 $ due to the fact that $(\rho_r)_{r\geq0}$ is a probability density. By equivalent form \eqref{CH1}
     it reads
     \begin{align*}
         \| \p_r\Gamma_r\rho_r*(\Gamma_tf)\|_{\infty}
  &=
  \|\int_{\mR^{2d}} \p_r\Gamma_r\rho_r(z')\Gamma_tf(z-z')\dif z'\|_{\infty}\\
      &=\|\int_{\mR^{2d}} \p_r\Gamma_r\rho_r(z')(\Gamma_tf(z-z')-\Gamma_tf(z))\dif z'\|_{\infty}
   \\&\lesssim  \|F_t\|_{\bC_{\bba}^{\beta}} \int_{\mR^{2d}}|z'|_\bba^\beta|\p_r\Gamma_r\rho_r(z')|\dif z'
    \\&\lesssim  \|F_t\|_{\bC_{\bba}^{\beta}} \int_{\mR^{2d}}(|x'|^\frac{\beta}{1+\alpha}+|v'|^\beta)|\p_r\Gamma_r\rho_r(z')|\dif z'
 \\&\lesssim  \|F_t\|_{\bC_{\bba}^{\beta}} r^{(\frac{\beta}{\alpha}-1)}.
     \end{align*}
    The first and second inequalities are following from elementary calculation.
    We applied  \eqref{CC03+}  in the last  inequality twice with taking $\beta_x=\frac{\beta}{1+\alpha}, \beta_v=0$, $m=n=0$, firstly and  $\beta_x=0, \beta_v=\beta$, $m=n=0$ in the second time.  We finally get \eqref{est-semi-sob} by \eqref{0222:00}.
     For \eqref{est-semi-sob-1}, we get similarly via applying \eqref{CC03+} since 
     \begin{align*}
               \| \nabla\p_r\Gamma_r\rho_r*F_t\|_{\bbp} \lesssim&
         \|F_t\|_{\bC_{\bba}^{\beta}} \int_{\mR^{2d}}|z'|_\bba^{\beta}|\nabla\p_r\Gamma_r\rho_r(z')|\dif z'     
         \\\lesssim & \|F_t\|_{\bC_{\bba}^{\beta}} \int_{\mR^{2d}}(|x'|^{\frac{\beta}{1+\alpha}}+|v'|^\beta)(|\nabla_x\p_r\Gamma_r\rho_r(z')|+|\nabla_v\p_r\Gamma_r\rho_r(z')|)\dif z'
    \\\lesssim &   \|F_t\|_{\bC_{\bba}^{\beta}}r^{\frac{\beta-1}{\alpha}-1}.
     \end{align*}
     The proof completes.
     \end{proof}
\begin{proposition}\label{prop:semi}
For $\epsilon>0, \beta\in(0,\alpha-1),$ some constant $\gamma$ as below, $\epsilon<\mu\leq 1+\epsilon$ and $\mu\geq \frac{\beta-\gamma}{\alpha}$, there exists a constant $N=N(d,\alpha,\beta,\mu,\gamma,\epsilon)$ so that the following holds: for any $f\in \bC_\bba^{(1+\alpha)\beta}$ and $(s,t)\in(0,1]_\leq^2$,
\begin{align}
    \label{est:semi-group-1}
    \|\nabla^\gamma P_tf\|_{\infty}\leq& N \|f\|_{\bC_\bba^{\beta}}t^{\frac{(\beta-\gamma)\wedge0}{\alpha}}, \quad t\in(0,1], \gamma\geq0;\\
     \label{est:semi-group-2}
    \|\nabla^\gamma (P_tf-P_s\Gamma_{t-s}f)\|_{\infty}\leq& N (\|f\|_{\bC_\bba^{\beta}}+t^{\beta}\|f\|_{\bC_\bba^{(1+\alpha)\beta}} )s^{\frac{(\beta-\gamma)}{\alpha}-\mu}(t-s)^{\mu-\epsilon}, \gamma=0,1.
\end{align}
\end{proposition}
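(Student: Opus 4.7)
The proof rests on two algebraic observations. First, \eqref{CC01} gives the convolution representation $P_tf=\Gamma_t\rho_t*\Gamma_tf$. Second, the shift semigroup property $\Gamma_s\Gamma_{t-s}=\Gamma_t$ yields
\[
P_s\Gamma_{t-s}f \;=\; \Gamma_s\rho_s*(\Gamma_s\Gamma_{t-s}f) \;=\; \Gamma_s\rho_s*\Gamma_tf,
\]
so that $P_tf-P_s\Gamma_{t-s}f=\int_s^t\partial_r(\Gamma_r\rho_r)\,dr\,*\,\Gamma_tf$. \cref{lem:est-semi-sob} then controls the integrand, and everything boils down to two scalar integrations.

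For \eqref{est:semi-group-1}, I would treat $\gamma=0$ by the $L^\infty$-contractivity of $P_t$, and $\gamma>0$ by writing $\nabla^\gamma P_tf=(\nabla^\gamma\Gamma_t\rho_t)*\Gamma_tf$, inserting a first-order increment of $\Gamma_tf$ thanks to $\int\nabla^\gamma\Gamma_t\rho_t=0$, and bounding by $\|f\|_{\bC_\bba^\beta}\int|\nabla^\gamma\Gamma_t\rho_t(z')|\,|z'|_\bba^\beta\,dz'$. The scaling \eqref{828:00} of $\Gamma_t\rho_t$ (the same mechanism that produced \eqref{CC03+}) converts this into the power $t^{(\beta-\gamma)/\alpha}$, which is clipped at $t^0$ when $\gamma\leq\beta$ since Hölder regularity of $f$ saturates the smoothing.

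For \eqref{est:semi-group-2}, applying $\nabla^\gamma$ under the integral and invoking \eqref{est-semi-sob}--\eqref{est-semi-sob-1} reduces the estimate to bounding
\[
I \;:=\; \int_s^t r^{a-1}\,dr, \qquad a:=\frac{\beta-\gamma}{\alpha},
\]
by $t^{a-\mu}(t-s)^{\mu-\epsilon}$. Change variable $r=tu$ to write $I=t^a\int_{s/t}^1u^{a-1}\,du$. For $a\in(0,1]$ (the case $\gamma=0$), the two elementary inequalities $1-x^a\le (1-x)^a$ and $1-x^a\le a(1-x)$ interpolate via a geometric mean to give $\int_x^1u^{a-1}\,du\le C(1-x)^\mu$ for every $\mu\in[a,1]$, hence $I\le Ct^{a-\mu}(t-s)^\mu$. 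Passing from $(t-s)^\mu$ to $(t-s)^{\mu-\epsilon}$ is automatic because $t-s\le 1$, and the extension to $\mu\in(1,1+\epsilon]$ follows from the bound $(t-s)\le(t-s)^{\mu-\epsilon}$ together with $t\le 1$.

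\paragraph{Main obstacle.}
The delicate case is $\gamma=1$ with $a=(\beta-1)/\alpha<0$: the integrand $r^{a-1}$ is not integrable at $0$, and the crude bound on $I$ is unbounded as $s\to 0$. I would exploit that the proposition is only ever invoked in the stochastic-sewing setup of \cref{lem:SSL} on $(s,t)\in\Delta_{[S,T]}$, which forces $s\gtrsim t-s$ and hence $x=s/t\gtrsim 1/2$. On this restricted range, a Taylor expansion of $x\mapsto x^a$ at $x=1$ gives $|x^a-1|\le C|a|(1-x)$, whence $I\le Ct^{a-1}(t-s)$, and the same geometric-mean interpolation---this time between this bound and the trivial $I\le t^a/|a|$---delivers the stated estimate for all $\mu\in(\epsilon,1+\epsilon]$. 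Checking this interpolation carefully and keeping track of the parameter constraint $\mu\ge(\beta-\gamma)/\alpha$ throughout is the only place where real care is required.
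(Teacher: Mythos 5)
Your argument for \eqref{est:semi-group-2} is the paper's argument: the identity $P_tf-P_s\Gamma_{t-s}f=\int_s^t\partial_r(\Gamma_r\rho_r)\,dr*\Gamma_tf$ plus \cref{lem:est-semi-sob}, reducing everything to $\int_s^t r^{a-1}\,dr$ with $a=\frac{\beta-\gamma}{\alpha}$. Where you genuinely add something is the last step: you are right that for $\gamma=1$ (so $a<0$) the stated bound cannot hold uniformly in $s$ — as $s\to0$ the integral behaves like $s^{a}$, and at $s=0$ the left-hand side is $\|\nabla(P_tf-\Gamma_tf)\|_\infty$, which is not controlled by the anisotropic norms for $\beta<1$. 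The paper's one-line justification (``since $\frac{\beta-\gamma}{\alpha}-\mu\le0$, $\mu-1>-1$, $|t-s|\le1$'') glosses over exactly this, and the implicit restriction $s\ge t-s$ coming from $\Delta_{[S,T]}$ is indeed the regime in which the $\gamma=1$ case is invoked (Case I of condition (2) in the proof of \cref{lemm:est-S3}), so your restricted proof is the correct reading of the statement.

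Two smaller points. First, your auxiliary inequality $1-x^a\le a(1-x)$ for $a\in(0,1]$ is reversed: concavity of $u\mapsto u^a$ gives $1-x^a\ge a(1-x)$. The conclusion $\int_x^1u^{a-1}\,du\le C(1-x)^{\mu}$ for $\mu\in(0,1]$ is still true (bound the quotient on $[0,\tfrac12]$ and near $x=1$, where $1-x^a\sim a(1-x)$), so this is a repairable slip, not a gap; your interpolation for $a<0$ on $s\ge t/2$ is fine. Second, \eqref{est:semi-group-1} is not proved in the paper but imported from \cite[Lemma 2.16, Remark 2.17]{HRZ}; your cancellation-plus-scaling sketch is the natural direct route, but for the full gradient you should separate $\nabla_x$ from $\nabla_v$, since an $x$-derivative of $\Gamma_t\rho_t$ costs $t^{-(1+\alpha)/\alpha}$ rather than $t^{-1/\alpha}$, so the exponent $t^{(\beta-\gamma)/\alpha}$ needs a word of justification there.
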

\begin{proof}
    \eqref{est:semi-group-1} is directly from \cite[Lemma 2.16, Remark 2.17]{HRZ}. Here we only need to show  \eqref{est:semi-group-2}. 
    By \eqref{CC01}, one sees that
 \begin{align*}
P_tf-P_s\Gamma_{t-s}f=(\Gamma_t\rho_t)*(\Gamma_tf)-(\Gamma_s\rho_s)*(\Gamma_tf)=\int_s^t\p_r\Gamma_r\rho_r\dif r\ast F_t,   \quad F_t:=\Gamma_tf.
 \end{align*}
 Then from Lemma \ref{lem:est-semi-sob} 
 we have
\begin{align*}
   \|\nabla^\gamma (P_tf-P_s\Gamma_{t-s}f)\|_{\infty}=&\|\int_s^t (\nabla^\gamma \p_r\Gamma_r\rho_r)\ast F_t\dif r\|_{\infty}\leq\int_s^t \|(\nabla^\gamma \p_r\Gamma_r\rho_r)\ast F_t\|_{\infty}\dif r
   \\ \lesssim&(\|f\|_{\bC_\bba^{\beta}}+t^{\beta}\|f\|_{\bC_\bba^{(1+\alpha)\beta}} )\int_s^tr^{\frac{\beta-\gamma}{\alpha}-1}dr \\ \lesssim& (\|f\|_{\bC_\bba^{\beta}}+t^{\beta}\|f\|_{\bC_\bba^{(1+\alpha)\beta}} )s^{\frac{(\beta-\gamma)}{\alpha}-\mu}(t-s)^{\mu-\epsilon}.
 \end{align*}
 Last inequality holds since $\frac{(\beta-\gamma)}{\alpha}-\mu\leq0$, $\mu-1>-1$ and $|t-s|\leq1.$
 We get the desired results.
\end{proof}

\subsection{A priori bounds}
For $p\geq 1$, $(S,T)\in[0,1]_\leq^2$, $\gamma\in(0,1]$, for a $\sigma$-algebra $\sG\subset \mathcal{F}_S$, for a bounded measurable function $f:[0,1]\times\Omega\rightarrow\mR^d$ which is adapted to $\mathcal{F}$, we denote
\begin{align}
    \|f\|_{C_p^0|\sG,[S,T]}:=\sup_{s\neq t\in[S,T]}\big\|f(t)\big\|_{L_p(\Omega)|\sG};\label{def:norm-random-0}\\
    [f]_{C_p^\gamma|\sG,[S,T]}:=\sup_{s\neq t\in[S,T]}\frac{\|f(t)-f(s)\|_{L_p(\Omega)|\sG}}{|t-s|^\gamma};\label{def:norms-random-1}\\
      [f]_{\sC_p^\gamma|\sG,[S,T]}:=\sup_{s\neq t\in[S,T]}\frac{\big\|\|f(t)-\mE^sf(t)\|_{L_1(\Omega)|\cF_s}\big\|_{L_p(\Omega)|\sG}}{|t-s|^\gamma}.\label{def:norms-random-2}
\end{align}
According to \cite[Lemma 4.2]{BDG-Levy}
we know that for any random variable $\tilde Y\in L_p(\Omega)$ and any $\sG$-measurable random variable $\hat{Y}\in L_p(\Omega)$
\begin{align}
    \|\tilde Y-\mE[\tilde Y|\sG]\|_{L_p(\Omega)|\sG}\leq2\|\tilde Y-\hat{Y}\|_{L_p(\Omega)|\sG} ,\quad a.s.  \label{est:two-random-norms-1}
\end{align}
and for (semi-)norms from \eqref{def:norms-random-1} and \eqref{def:norms-random-2} one has
\begin{align}
    \label{est:two-random-norms-2} [f]_{\sC_p^\gamma|\sG,[S,T]}\leq 2  [f]_{C_p^\gamma|\sG,[S,T]}.
\end{align}

\begin{lemma}
    \label{lem:moment-M}
    Let $M$ be as \eqref{def:noise}. Then for any $p\geq1,\gamma>0$, for any $(s,t)\in[0,1]_\leq^2$,  there  exists $C=C(\gamma,p,d,\alpha,\epsilon)>0$ so that
    \begin{align}
        \label{est:moment-M}
         \big\||M_t-\Gamma_{t-s}M_s|_\bba^\gamma\wedge1\big\|_{L_p(\Omega)}  \leq C|t-s|^{\frac{\gamma}{\alpha}\wedge\frac{1}{p}-\epsilon}.
    \end{align}
\end{lemma}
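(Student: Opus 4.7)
The plan is to reduce the estimate to a pure scaling computation for $M_1$. First, by the stationary-increment identity \eqref{prop:Markov}, $M_t-\Gamma_{t-s}M_s$ has the same law as $M_h$ with $h:=t-s$, so it suffices to show $\||M_h|_\bba^\gamma\wedge 1\|_{L^p(\Omega)}\lesssim h^{(\gamma/\alpha)\wedge(1/p)-\epsilon}$ for $h\in(0,1]$.

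Second, the self-similarity of the $\alpha$-stable process gives $(L_{hu})_{u\in[0,1]}\overset{(d)}{=}(h^{1/\alpha}L_u)_{u\in[0,1]}$, hence $L_h\overset{(d)}{=}h^{1/\alpha}L_1$ and, after the change of variables $r=hu$, $\int_0^h L_r\,dr\overset{(d)}{=}h^{1+1/\alpha}\int_0^1 L_u\,du$ jointly. Since $|z|_\bba=|x|^{1/(1+\alpha)}+|v|$ and $(1+1/\alpha)/(1+\alpha)=1/\alpha$, this collapses to the clean scaling $|M_h|_\bba\overset{(d)}{=}h^{1/\alpha}|M_1|_\bba$.

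The core step is then to bound $\|(h^{\gamma/\alpha}|M_1|_\bba^\gamma)\wedge 1\|_{L^p(\Omega)}$. The key input is the heavy-tail bound $\mP(|M_1|_\bba>r)\lesssim r^{-\alpha}$, which follows from the standard $\alpha$-stable tail of $L_1$ together with the same estimate for the $\alpha$-stable variable $\int_0^1 L_u\,du=\int_0^1(1-u)\,dL_u$ (which, after the $1/(1+\alpha)$ power, only contributes a faster-decaying $r^{-\alpha(1+\alpha)}$ piece). A layer-cake integration
\begin{align*}
\mE\big[((h^{\gamma/\alpha}|M_1|_\bba^\gamma)\wedge 1)^p\big]=p\int_0^1 r^{p-1}\mP\big(|M_1|_\bba>r^{1/\gamma}h^{-1/\alpha}\big)\,dr
\end{align*}
then splits into three regimes according to the sign of $\gamma p-\alpha$, giving the exponent $\gamma/\alpha$ when $\gamma p<\alpha$ (since $\mE|M_1|_\bba^{\gamma p}<\infty$) and the exponent $1/p$ when $\gamma p>\alpha$ (where the divergent tail is cut off at $h^{-1/\alpha}$).

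The one place requiring care---the anticipated ``main obstacle''---is the critical threshold $\gamma p=\alpha$, where the integral above produces an unavoidable logarithmic factor $|\log h|$; this is precisely why the arbitrarily small loss $\epsilon>0$ is built into the exponent, mirroring the corresponding slack already present in \eqref{con:noise-alpha}.
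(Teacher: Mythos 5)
Your argument is correct, and it follows the paper's proof only in its first step: both reduce via \eqref{prop:Markov} to estimating $\||M_h|_\bba^\gamma\wedge 1\|_{L^p(\Omega)}$ with $h=t-s$. From there the routes diverge. The paper splits $|M_h|_\bba$ into its two components, passes the $L^p$-norm inside the time integral of the first component (Minkowski's integral inequality, which is why it assumes w.l.o.g. $p\gamma/(1+\alpha)>1$), and then invokes the previously stated one-time estimate \eqref{con:noise-alpha} for $|L_r|$ at each fixed $r$; the exponent $\frac{\gamma}{\alpha+1}(1+\frac1\alpha)=\frac\gamma\alpha$ for the $x$-component then comes out of that bound rather than from an exact scaling of the pair. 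You instead exploit the joint self-similarity $(\int_0^hL_r\,dr,\,L_h)\overset{(d)}{=}(h^{1+1/\alpha}\int_0^1L_u\,du,\,h^{1/\alpha}L_1)$, which collapses to $|M_h|_\bba\overset{(d)}{=}h^{1/\alpha}|M_1|_\bba$, and then run a direct tail/layer-cake computation using $\mP(|M_1|_\bba>r)\lesssim r^{-\alpha}$ (your identification of $\int_0^1L_u\,du=\int_0^1(1-u)\,dL_u$ as an $\alpha$-stable variable, whose $\frac1{1+\alpha}$-power only contributes an $r^{-\alpha(1+\alpha)}$ tail, is the right justification). Your case analysis $\gamma p\lessgtr\alpha$ reproduces exactly the exponent $\frac\gamma\alpha\wedge\frac1p$ in \eqref{est:moment-M}, and your treatment of the critical case $\gamma p=\alpha$ makes explicit where the $\epsilon$-loss is genuinely needed, something the paper simply inherits from the $\epsilon$ already present in \eqref{con:noise-alpha}. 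In short: your proof is self-contained at the level of tail asymptotics and slightly sharper off the critical line (no $\epsilon$ needed when $\gamma p<\alpha$), while the paper's proof is shorter because it recycles \eqref{con:noise-alpha} and avoids computing the law of $M_1$ altogether.
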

\begin{proof}
  By \eqref{prop:Markov} and Minkowski inequality we can write  (without losing generality we can assuming  $p\frac{\gamma}{1+\alpha}>1$)
       \begin{align*}
         \big\|&|M_t-\Gamma_{t-s}M_s|_\bba^\gamma\wedge1\big\|_{L_p(\Omega)}\\ = &  \big\||M_{t-s}|_\bba^\gamma\wedge1\big\|_{L_p(\Omega)}\lesssim \big\|(|\int_0^{t-s}L_rdr|^{\frac{\gamma}{\alpha+1}}+|L_{t-s}|^\gamma)\wedge1\big\|_{L_p(\Omega)}
         \\\lesssim& \big(\int_0^{t-s}\big\|L_r\big\|_{L_{\frac{p\gamma}{1+\alpha}}(\Omega)}\wedge 1dr\big)^{\frac{\gamma}{\alpha+1}}+\big\||L_{t-s}|^\gamma\wedge 1\big\|_{L_p(\Omega)}
          \\\lesssim&|t-s|^{\frac{\gamma}{\alpha+1}(1+\frac{1}{\alpha})\wedge\frac{1}{p}-\epsilon}+|t-s|^{\frac{\gamma}{\alpha}\wedge\frac{1}{p}-\epsilon}
           \lesssim  |t-s|^{\frac{\gamma}{\alpha}\wedge\frac{1}{p}-\epsilon}.
    \end{align*}
    The penultimate inequality follows from \eqref{con:noise-alpha}.
\end{proof}
\begin{lemma}\label{lem:apriori}
    Let $p\geq 1$ and $\epsilon>0$. Suppose that  Assumption \ref{ass:main1} holds. Let $M$ be as \eqref{def:noise} and let $Z=(X,V)$ and $Z^n=(X^n,V^n)$ satisfy \eqref{eq:SDE} and \eqref{eq:SDE-EM} correspondingly. Then for $Y:=Z-M, Y^n:=Z^n-M$ there exists a constant $C=C(d,\beta,\|b\|_{\bC_\bba^{\beta}},\alpha,\epsilon,p)$ so that
    \begin{align}
   \|Y_t-\mE^sY_t  \|_{L_p(\Omega)|\cF_s}  \leq C|t-s|^{1+(\frac{\beta}{\alpha}\wedge\frac{1}{p})-\epsilon}\label{est:apaiori-1},\\
         \|Y_t^n-\mE^sY_t^n  \|_{L_p(\Omega)|\cF_s}  \leq C|t-s|^{1+(\frac{\beta}{\alpha}\wedge\frac{1}{p})-\epsilon}.  \label{est:apaiori-2}
    \end{align}
\end{lemma}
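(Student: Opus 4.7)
The plan is to bound $\|Y_t-\mE^sY_t\|_{L_p(\Omega)|\cF_s}$ componentwise, using \eqref{est:two-random-norms-1} as the main device: it lets one replace the conditional expectation by any convenient $\cF_s$-measurable proxy. From the integral form of \eqref{eq:SDE} one reads off
\[
Y^{(1)}_t - Y^{(1)}_s = (t-s)Y^{(2)}_s + \int_s^t\!\!\int_s^u b(Z_r)\,dr\,du,\qquad Y^{(2)}_t - Y^{(2)}_s = \int_s^t b(Z_u)\,du,
\]
and the $\cF_s$-measurable affine term $(t-s)Y^{(2)}_s$ cancels under $\mE^s$. The position component is then bounded by $\|b\|_\infty(t-s)^2$, which on $[0,1]$ dominates the target $(t-s)^{1+\frac{\beta}{\alpha}\wedge\frac{1}{p}-\epsilon}$ because $\frac{\beta}{\alpha}\wedge\frac{1}{p}\leq 1$. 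The regularization-by-noise work thus lives entirely in the velocity piece $\int_s^t(b(Z_u)-\mE^sb(Z_u))\,du$.

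For that piece I would first employ the Markov-type decomposition
\[
Z_u = \Gamma_{u-s}Z_s + \xi_u + R_{s,u},\qquad \xi_u := M_u - \Gamma_{u-s}M_s,
\]
in which \eqref{prop:Markov} guarantees that $\xi_u$ is independent of $\cF_s$ with law $M_{u-s}$, while $R_{s,u} = \bigl(\int_s^u\!\int_s^r b(Z_{r'})\,dr'\,dr,\ \int_s^u b(Z_r)\,dr\bigr)$ is a bounded drift remainder satisfying $|R_{s,u}|_\bba\lesssim\|b\|_\infty^{1/(1+\alpha)}(u-s)^{2/(1+\alpha)}$ for $u-s\leq 1$ (the position component dominates since $2/(1+\alpha)<1$). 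Applying the anisotropic H\"older bound $|b(\cdot+R)-b(\cdot)|\lesssim\|b\|_{\bC_\bba^\beta}(|R|_\bba^\beta\wedge 1)$, the cost of replacing $Z_u$ by $\Gamma_{u-s}Z_s+\xi_u$ inside $b$ integrates to $(t-s)^{1+\frac{2\beta}{1+\alpha}}$, which beats the target since $\frac{2\beta}{1+\alpha}\geq\frac{\beta}{\alpha}$ whenever $\alpha\geq 1$.

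The remaining principal term $\int_s^t[b(\Gamma_{u-s}Z_s+\xi_u) - \mE^sb(\Gamma_{u-s}Z_s+\xi_u)]\,du$ is handled by a four-step chain: conditional Minkowski brings $\|\cdot\|_{L_p(\Omega)|\cF_s}$ inside the integral; \eqref{est:two-random-norms-1} with the $\cF_s$-measurable proxy $b(\Gamma_{u-s}Z_s)$ replaces the integrand by $2\|b(\Gamma_{u-s}Z_s+\xi_u)-b(\Gamma_{u-s}Z_s)\|_{L_p(\Omega)|\cF_s}$; the anisotropic H\"older regularity of $b$ bounds this by $C\|b\|_{\bC_\bba^\beta}(|\xi_u|_\bba^\beta\wedge 1)$; and \cref{lem:moment-M} with $\gamma=\beta$ delivers $\||\xi_u|_\bba^\beta\wedge 1\|_{L_p(\Omega)}\lesssim(u-s)^{\frac{\beta}{\alpha}\wedge\frac{1}{p}-\epsilon}$, where the key point is that $\xi_u$ is independent of $\cF_s$ so the conditional $L_p$ norm degenerates to the unconditional one. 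Integration in $u\in[s,t]$ then yields exactly $(t-s)^{1+\frac{\beta}{\alpha}\wedge\frac{1}{p}-\epsilon}$. The estimate for $Y^n$ follows by the identical template applied to the Euler integrand $b(\Gamma_{r-k_n(r)}Z^n_{k_n(r)})$. The main obstacle I anticipate is verifying that the analogue $|\tilde R^n_{s,r}|_\bba\lesssim(r-s)^{2/(1+\alpha)}$ holds \emph{uniformly in $n$}: because the Euler scheme's piecewise-constant drift acts through $\Gamma_{\cdot-k_n(\cdot)}b(Z^n_{k_n(\cdot)})$, one must split $[s,r]$ across the grid and control the partial boundary intervals separately, but each piece remains a bounded-drift integral so the uniform bound persists. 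For $\beta\geq 1$, where the plain H\"older step undershoots the target exponent, one substitutes a first-order Taylor expansion in the $v$-variable of $b$ and uses $\mE^s\xi_u=0$ (first moments vanish for the symmetric $\alpha$-stable $L$ with $\alpha>1$) to cancel the linear contribution before invoking \cref{lem:moment-M} with $\gamma=\beta$.
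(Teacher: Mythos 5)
Your route is genuinely different from the paper's. The paper compares $Y_t$ with the $\cF_s$-measurable proxy $\Gamma_{t-s}Y_s+\int_s^tB(\Gamma_{r-s}M_s+\mE^sY_r)\,dr$, which reintroduces $|Y_r-\mE^sY_r|_\bba^\beta$ on the right-hand side and therefore requires a bootstrap: starting from the trivial exponent one iterates $m_{j+1}=1+(\tfrac{\beta}{\alpha}\wedge\tfrac1p\wedge\beta m_j)-\epsilon$, which reaches the stated exponent in finitely many steps because $\beta>1-\tfrac{\alpha}{2}$. You instead freeze at $\Gamma_{u-s}Z_s$, absorb the drift remainder $R_{s,u}$ pathwise through $\|b\|_\infty$, and use that $\xi_u=M_u-\Gamma_{u-s}M_s$ is independent of $\cF_s$ together with \cref{lem:moment-M}; the kinetic scaling gives $|R_{s,u}|_\bba\lesssim (u-s)^{2/(1+\alpha)}$ and $\tfrac{2\beta}{1+\alpha}\ge\tfrac{\beta}{\alpha}$ precisely because $\alpha\ge1$, so the deterministic error never undershoots the target and the estimate closes in one pass, with no induction. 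For $\beta\in(0,1)$ (which is the case the paper actually invokes after replacing $\beta$ by $\beta/(1+\alpha)$ in the proof of the main theorem) your argument is correct and arguably simpler. For $Y^n$ (read \eqref{est:apaiori-2} as the statement for $Y^n$), the template does go through, but besides the grid-splitting of the drift remainder that you flag, note that for $k_n(r)\ge s$ the noise piece $\Gamma_{r-k_n(r)}M_{k_n(r)}-\Gamma_{r-s}M_s$ is not literally of the form treated in \cref{lem:moment-M}; it splits as $(M_r-\Gamma_{r-s}M_s)-(M_r-\Gamma_{r-k_n(r)}M_{k_n(r)})$, both pieces independent of $\cF_s$ and with the right moments since $r-k_n(r)\le r-s$, while for $k_n(r)<s$ the integrand is $\cF_s$-measurable and drops out.

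The genuine gap is your treatment of $\beta\ge1$, which \cref{ass:main1} allows. The quantity being estimated is a conditional $L_p$ norm, and centering does not shrink an $L_p$ norm: after your Taylor expansion in $v$, the linear term $\nabla_vb(\Gamma_{u-s}Z_s)\cdot(L_u-L_s)$ has $\|\cdot\|_{L_p(\Omega)|\cF_s}$ of order $(u-s)^{1/\alpha}$ for $p<\alpha$ (and is not $p$-integrable at all for $p\ge\alpha$ without truncation), irrespective of the fact that $\mE^s\xi_u=0$. So for $\beta>1$ and small $p$ — and the paper applies this lemma with $p=1$ — your fix only yields the exponent $1+\tfrac1\alpha\wedge\tfrac1p-\epsilon$, not $1+\tfrac{\beta}{\alpha}\wedge\tfrac1p-\epsilon$; recovering the extra smoothness would require a semigroup/sewing-type argument, not a pathwise Taylor step. (In fairness, the paper's own proof uses $|b(z)-b(z')|\lesssim\|b\|_{\bC_\bba^\beta}|z-z'|_\bba^\beta$, which for $\beta>1$ is only valid with exponent $\beta\wedge1$, so it too really proves the weaker exponent, and that weaker version suffices for the later applications.) Either restrict your argument to $\beta\in(0,1)$ or state the conclusion with $\beta\wedge1$ in place of $\beta$; as written, the Taylor/mean-zero step would fail.
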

\begin{proof}
We apply the induction following from \cite[Proof of Lemma 4.2 (iii)]{BDG-Levy}. Suppose \eqref{est:apaiori-1} holds for some $m\geq0$:
\begin{align*}
     \|Y_t-\mE^sY_t  \|_{L_p(\Omega)|\cF_s}  \leq C|t-s|^m.
\end{align*}
This is clearly true since $b$ is bounded and continuous.  We proceed now inductively on $m$.
We apply \eqref{est:two-random-norms-1} with taking $\sG:=\cF_s$ and $\tilde Y:=Y_t, \hat{Y}:=\Gamma_{t-s}Y_s+\int_s^tB(\Gamma_{r-s}M_s+\mE^sY_r)dr\in\sG$ where $B(x,v):=\left(\begin{smallmatrix} v\\b(x,v)
    \end{smallmatrix}\right)$, $(x,v)\in\mR^{d}\times\mR^{d}$. By the fact that $Z=(X,V)$ satisfies \eqref{eq:SDE}
    we get for $(s,t)\in[0,1]_\leq^2$,
    \begin{align*}
   \|&Y_t-\mE^sY_t  \|_{L_p(\Omega)|\sF_s}\\\leq  & 2  \big\|Y_t-\Gamma_{t-s}Y_s-\int_s^tB(\Gamma_{r-s}M_s+\mE^sY_r)dr\big\|_{L_p(\Omega)|\sF_s}
   \\=&2 \Big\|\left(\begin{smallmatrix} \int_0^t\int_0^r b(Z_u)dudr-\int_0^s\int_0^r b(Z_u)dudr-(t-s)\int_0^s b(Z_u)du\\\int_0^t b(Z_u)du-\int_0^s b(Z_u)du
    \end{smallmatrix}\right)\\&\quad\qquad\qquad\qquad\qquad\qquad\qquad\qquad-\int_s^tB(\Gamma_{r-s}M_s+\mE^sY_r)dr
   \Big\|_{L_p(\Omega)|\sF_s}
     \\=&2 \Big\|\left(\begin{smallmatrix} \int_s^t\int_s^r b(Z_u)dudr\\\int_s^t b(Z_u)du
    \end{smallmatrix}\right)-\int_s^tB(\Gamma_{r-s}M_s+\mE^sY_r)dr
  \Big\|_{L_p(\Omega)|\sF_s}
    \\ {\lesssim}&{\|b\|_{\bC_\bba^\beta}}
    \big\|\int_s^t\int_s^r\big(|M_u-\Gamma_{u-s}M_s+Y_u-\mE^sY_u|_\bba^\beta\big)\wedge1 dudr\big\|_{L_p(\Omega)|\sF_s} \\& \quad\qquad\qquad+\|b\|_{\bC_\bba^\beta}\big\|\int_s^t\big(|M_r-\Gamma_{r-s}M_s+Y_r-\mE^sY_r|_\bba^\beta\big)\wedge1 dr\big\|_{L_p(\Omega)|\sF_s}
     \\\lesssim& \|b\|_{\bC_\bba^\beta}
   \int_s^t\int_s^t \big\|\big(|M_u-\Gamma_{u-s}M_s|_\bba^\beta+|Y_u-\mE^sY_u|_\bba^\beta\big)\wedge1\big\|_{L_p(\Omega)|\sF_s} dudr \\&\quad\qquad\qquad +\|b\|_{\bC_\bba^\beta}\int_s^t\big\|\big(|M_r-\Gamma_{r-s}M_s|_\bba^\beta+|Y_r-\mE^sY_r|_\bba^\beta\big)\wedge1 \big\|_{L_p(\Omega)|\sF_s}dr\\\lesssim& 
 \|b\|_{\bC_\bba^\beta}\int_s^t\big\|\big(|M_r-\Gamma_{r-s}M_s|_\bba^\beta+|Y_r-\mE^sY_r|_\bba^\beta\big)\wedge1 \big\|_{L_p(\Omega)|\sF_s}dr .
\end{align*}
Following from \eqref{prop:Markov}, Lemma \ref{lem:moment-M} and  the induction hypothesis we have
\begin{align*}
    \big \||Y_t-\mE^sY_t|_\bba\big\|_{L_q(\Omega)|\cF_s}  &\leq C|t-s|^{2+(\frac{\beta}{(1+\alpha)\alpha}\wedge\frac{1}{p}\wedge\beta m)-\epsilon}+ C|t-s|^{1+(\frac{\beta}{\alpha}\wedge\frac{1}{p}\wedge\beta m)-\epsilon}\\&\leq C|t-s|^{1+(\frac{\beta}{\alpha}\wedge\frac{1}{p}\wedge\beta m)-\epsilon}.
\end{align*}
Observe that if $\epsilon>0$ is sufficiently small then the recursion $m_0=0,m_{j+1}=1+(\frac{\beta}{\alpha}\wedge\frac{1}{p}\wedge\beta m_j)-\epsilon, j\in\mN$ reaches $1+(\frac{\beta}{\alpha}\wedge\frac{1}{p})-\epsilon$ in finitely many steps since $\alpha>1-\beta,\beta<1$ implying $\alpha>\frac{1-\beta}{1-\epsilon}$ for small enough $\epsilon$. Hence \eqref{est:apaiori-1} holds.\\

We show \eqref{est:apaiori-2} similarly by induction. This time we apply \eqref{est:two-random-norms-1} by taking $\sG:=\cF_s$ and $\tilde Y:=Y_t^n, \hat{Y}:=\Gamma_{t-\hat{k}_{n}(s)}Y_{\hat{k}_{n}(s)}^n+\int_{\hat{k}_{n}(s)}^tB(\Gamma_{{k_n(r)}-s}M_s+\mE^sY_{k_n(r)}^n)dr$ where  $\hat{k}_{n}(s):=\frac{\lceil ns\rceil}{n}$. To be noticed that by definition of $Y^n$, $Y^n_{\hat{k}_{n}(s)}\in\sG$ although $\hat{k}_{n}(s)\geq s$.  Then we can write
for $(s,t)\in[0,1]_\leq^2$
    \begin{align*}
  \big \|&Y_t^n-\mE^s[Y_t^n]\big\|_{L_p(\Omega)|\sF_s}\\\leq  & 2  \big\|Y_t^n-\Gamma_{t-\hat{k}_{n}(s)}Y_{\hat{k}_{n}(s)}^n-\int_{\hat{k}_{n}(s)}^tB(\Gamma_{{k_n(r)}-s}M_s+\mE^sY_{k_n(r)}^n)dr\big\|_{L_p(\Omega)|\sF_s}
   \\=&2 \Big\|\left(\begin{smallmatrix} \int_0^t\int_0^r b(Z_{{k}_{n}(u)}^n)dudr-\int_0^{\hat{k}_{n}(s)}\int_0^rb(Z_{{k}_{n}(u)}^n)dudr-(t-\hat{k}_{n}(s))\int_0^{\hat{k}_{n}(s)} b(Z_{{k}_{n}(u)}^n)du\\\int_0^t b(Z_{{k}_{n}(u)}^n)du-\int_0^{\hat{k}_{n}(s)} b(Z_{{k}_{n}(u)}^n)du
    \end{smallmatrix}\right)\\&\quad-\int_{\hat{k}_{n}(s)}^tB(\Gamma_{{k_n(r)}-s}M_s+\mE^sY_{k_n(r)}^n)dr
\Big\|_{L_p(\Omega)|\sF_s}
     \\=&2 \Big\|\left(\begin{smallmatrix} \int_{\hat{k}_{n}(s)}^t\int_{\hat{k}_{n}(s)}^r b(Z_{{k}_{n}(u)}^n)dudr\\\int_{\hat{k}_{n}(s)}^t b(Z_{{k}_{n}(u)}^n)du
    \end{smallmatrix}\right)-\int_{\hat{k}_{n}(s)}^tB(\Gamma_{{k_n(r)}-s}M_s+\mE^sY_{{k_n(r)}}^n)dr
  \Big\|_{L_p(\Omega)|\sF_s}
    \\\lesssim&
    \big\|\int_{\hat{k}_{n}(s)}^t\int_{\hat{k}_{n}(s)}^r\big(|M_u-\Gamma_{{k_n(u)}-s}M_s|_\bba^\beta+|Y_{{k_n(u)}}^n-\mE^sY_{{k_n(u)}}^n|_\bba^\beta\big)\wedge1 dudr\big\|_{L_p(\Omega)|\sF_s}\\& +\big\|\int_{\hat{k}_{n}(s)}^t\big(|M_{k_n(r)}-\Gamma_{{k_n(r)}-s}M_s|_\bba^\beta+|Y_{{k_n(r)}}^n-\mE^sY_{{k_n(r)}}^n|_\bba^\beta\big)\wedge1 dr\big\|_{L_p(\Omega)|\sF_s}
     \\\lesssim&
  \int_{\hat{k}_{n}(s)}^t\big\|\big(|M_r-\Gamma_{{k_n(r)}-s}M_s|_\bba^\beta+|Y_{{k_n(r)}}^n-\mE^sY_{{k_n(r)}}^n|_\bba^\beta\big)\wedge1 \big\|_{L_p(\Omega)|\sF_s}dr.
\end{align*}
We can see that for $r\geq \hat{k}_{n}(s)$, ${k}_{n}(r)\geq \hat{k}_{n}(s)\geq s$ so $M_r-\Gamma_{{k_n(r)}-s}M_s$ is independent of $\cF_s$. By the same argument of getting \eqref{est:apaiori-1} we get \eqref{est:apaiori-2}.
\end{proof}
\begin{lemma}
    \label{lem:est-on-Yn} Let $p\geq 1$ and $\epsilon>0$. Suppose that Assumption \ref{ass:main1} holds. Let $M$ be as \eqref{def:noise} and let  $Z^n=(X^n,V^n)$ satisfy \eqref{eq:SDE-EM}. Then for $ Y^n:=Z^n-M$ there exists a constant $N=N(d,\beta,\|b\|_{\bC_\bba^\beta},\alpha,\epsilon,p)$ so that for any $(s,t)\in[0,1]_\leq^2$
    \begin{align}
        \label{est:Yn-Yknt-1}
       &  [Y^n-\Gamma_{\cdot-k_n(\cdot)}Y^n_{k_n(\cdot)}]_{C_p^0|\sG,[s,t]}\leq Nn^{-1}; \\&
      [Y^n-\Gamma_{\cdot-k_n(\cdot)}Y^n_{k_n(\cdot)}]_{\sC_p^{\frac{1}{2}}|\sG,[s,t]}\leq N n^{-\frac{1}{2}-(\frac{\beta}{\alpha}\wedge\frac{1}{2})+\epsilon} .\label{est:Yn-Yknt-2}
    \end{align}
\end{lemma}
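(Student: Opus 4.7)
The plan is to make the left-hand side explicit and then run a fluctuation-plus-Hölder decomposition on each integrand.

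Substituting $Y^n = Z^n - M$ into the Euler scheme \eqref{eq:SDE-EM} and noting that $k_n(r) = k_n(t)$ for $r \in [k_n(t), t]$, a direct computation shows that the $L$-contributions cancel, yielding
\begin{align*}
Y^n_t - \Gamma_{t - k_n(t)} Y^n_{k_n(t)} = \Big( \int_{k_n(t)}^t \!\! \int_{k_n(t)}^u \Gamma_{r - k_n(t)} b(Z^n_{k_n(t)}) \, dr \, du, \; \int_{k_n(t)}^t \Gamma_{r - k_n(t)} b(Z^n_{k_n(t)}) \, dr \Big).
\end{align*}
The bound \eqref{est:Yn-Yknt-1} is then immediate from the boundedness of $b$: both components are dominated a.s.\ by $\|b\|_\infty (t-k_n(t)) \lesssim n^{-1}$.

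For \eqref{est:Yn-Yknt-2} I would fix $s<t$ and split on $s\gtrless k_n(t)$. If $s\geq k_n(t)$ then $Z^n_{k_n(t)}$ is $\cF_s$-measurable, hence so is the entire expression above, and the conditional fluctuation vanishes. When $s< k_n(t)$, set $\tau:=k_n(t)$ and bound the $v$-component via Fubini by $\int_\tau^t \|g_r(Z^n_\tau)-\mE^s g_r(Z^n_\tau)\|_{L_1|\cF_s}\,dr$ with $g_r:=\Gamma_{r-\tau}b$. To estimate the integrand I would approximate $Z^n_\tau=M_\tau+Y^n_\tau$ by the $\cF_s$-measurable proxy $M_\tau+\Gamma_{\tau-s}Y^n_s$: the Hölder gap is controlled deterministically via the bound $|Y^n_\tau-\Gamma_{\tau-s}Y^n_s|_\bba\lesssim \|b\|_\infty(\tau-s)^{2/(1+\alpha)}$ read off from the scheme, while the true fluctuation term is handled by writing $M_\tau=D+\Gamma_{\tau-s}M_s$ with $D$ independent of $\cF_s$ and distributed as $M_{\tau-s}$, so that by \eqref{eq:inde-incre} the conditional expectation equals $P_{\tau-s}(\Gamma_{r-\tau}b)(Z^n_s)$. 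Hölder continuity of $b$ and \cref{lem:moment-M} then deliver
\begin{align*}
\|g_r(Z^n_\tau)-\mE^s g_r(Z^n_\tau)\|_{L_1|\cF_s}\lesssim \|b\|_{\bC_\bba^\beta}\Big[(\tau-s)^{\beta/\alpha-\epsilon}+(r-\tau)^{\beta/(1+\alpha)}(\tau-s)^{\beta/(\alpha(1+\alpha))-\epsilon}\Big],
\end{align*}
the extra $(r-\tau)^{\beta/(1+\alpha)}$ recording the anisotropic cost of the $\Gamma_{r-\tau}$ shift inside $g_r$. A parallel but strictly smaller bound handles the $x$-component thanks to the extra time integration.

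Integrating over $r\in[\tau,t]$ (length $\leq 1/n$) reduces \eqref{est:Yn-Yknt-2} to the deterministic inequality
\begin{align*}
\tfrac{1}{n}(\tau-s)^{\beta/\alpha-\epsilon}+\tfrac{1}{n^{1+\beta/(1+\alpha)}}(\tau-s)^{\beta/(\alpha(1+\alpha))-\epsilon}\lesssim (t-s)^{1/2}\,n^{-1/2-(\beta/\alpha\wedge 1/2)+\epsilon}.
\end{align*}
The main obstacle is this last step: a careful case analysis on $(t-s)$ versus $1/n$ and on $\beta/\alpha$ versus $1/2$, using $\tau-s\leq t-s$ and $t-\tau\leq 1/n$, is needed to see that at the boundary $\tau-s\sim 1/n$ the two contributions agree (both of order $n^{-1-\beta/\alpha}$) and jointly match the target rate $(t-s)^{1/2}n^{-1/2-(\beta/\alpha\wedge 1/2)+\epsilon}$ in every regime. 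The outer $L_p|\sG$ norm adds no extra work, since every estimate above is either pointwise or already uniform in $\omega$ via \cref{lem:moment-M}.
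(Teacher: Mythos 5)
Your overall skeleton matches the paper's: the explicit formula for $Y^n_t-\Gamma_{t-k_n(t)}Y^n_{k_n(t)}$ plus boundedness of $b$ gives \eqref{est:Yn-Yknt-1}, and for \eqref{est:Yn-Yknt-2} you split according to whether the conditioning time lies past the gridpoint $k_n(t)$, the first case being trivial by measurability. In the nontrivial case, however, your route is genuinely different from the paper's. The paper keeps all increments on the short interval $[k_n(r_2),r_2]$: it bounds the conditional fluctuation of $b(Z^n_{k_n(r_2)})$ by comparing with $b(Z^n_{r_2})$, so every factor is automatically a power of $r_2-k_n(r_2)\le n^{-1}\wedge(r_2-r_1)$ and the rate drops out with no case analysis. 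You instead bound the fluctuation over the long interval $[s,\tau]$, using the decomposition $M_\tau=\Gamma_{\tau-s}M_s+D$ with $D\overset{(d)}{=}M_{\tau-s}$ independent of $\cF_s$, the anisotropic H\"older bound for $\Gamma_{r-\tau}b$, and \cref{lem:moment-M}; the deterministic gap $|Y^n_\tau-\Gamma_{\tau-s}Y^n_s|_\bba\lesssim(\tau-s)^{2/(1+\alpha)}$ is indeed absorbed since $2\beta/(1+\alpha)\ge\beta/\alpha$ for $\alpha\ge1$. This is a workable alternative (and its conditioning step is cleanly justified, whereas the comparison you avoid requires some care), at the price of having to balance $(\tau-s)$-powers against the prefactor $t-\tau$. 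One wording slip: $M_\tau+\Gamma_{\tau-s}Y^n_s$ is not $\cF_s$-measurable; what you actually use (only $Y^n$ is frozen, $M_\tau$ handled through $D$) is fine.

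The concrete problem is the final reduction. By replacing $t-\tau$ by $n^{-1}$ before comparing with the target, you arrive at the displayed ``deterministic inequality'', and that inequality is false: with $\beta/\alpha<\tfrac12$, fix $n$, take $t=\tau$ a gridpoint and $s\uparrow\tau$, so $\tau-s=t-s=\delta\to0$; then the left-hand side is at least $n^{-1}\delta^{\beta/\alpha-\epsilon}$ while the right-hand side is $\delta^{1/2}n^{-1/2-\beta/\alpha+\epsilon}$, and the ratio blows up as $\delta\to0$ because $\beta/\alpha-\epsilon<\tfrac12$. The two facts you list for the case analysis ($\tau-s\le t-s$ and $t-\tau\le n^{-1}$) cannot repair this, precisely because the factor $t-\tau$ has already been discarded. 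The fix is to keep the prefactor: after integrating in $r$ you have $(t-\tau)(\tau-s)^{\beta/\alpha-\epsilon}$ and $(t-\tau)^{1+\beta/(1+\alpha)}(\tau-s)^{\beta/(\alpha(1+\alpha))-\epsilon}$, and you must use $t-\tau\le(t-s)\wedge n^{-1}$, i.e.\ $t-\tau\le(t-s)^{1/2}n^{-1/2}$, before estimating the remaining $(\tau-s)$-powers (splitting only on $\tau-s\lessgtr n^{-1}$). With that, both terms are bounded by $(t-s)^{1/2}n^{-1/2-(\frac{\beta}{\alpha}\wedge\frac12)+\epsilon}$ in every regime, and your argument closes; as written, the last step would fail.
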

\begin{proof}
    We start with showing \eqref{est:Yn-Yknt-1}. First notice that  for any $t\in[0,1]$
    \begin{align}\label{equ:Yn}
      Y_t^n-\Gamma_{t-k_n(t)}Y^n_{k_n(t)} = &\left(\begin{smallmatrix} \int_0^t\int_0^r b(Z_{{k}_{n}(u)}^n)dudr-\int_0^{{k}_{n}(t)}\int_0^rb(Z_{{k}_{n}(u)}^n)dudr-(t-{k}_{n}(t))\int_0^{{k}_{n}(t)} b(Z_{{k}_{n}(u)}^n)du \\\int_0^t b(Z_{{k}_{n}(u)}^n)du-\int_0^{{k}_{n}(t)} b(Z_{{k}_{n}(u)}^n)du
    \end{smallmatrix}\right)
    \nonumber\\=&\left(\begin{smallmatrix} \int_{{k}_{n}(t)}^t\int_{{k}_{n}(t)}^r b(Z_{{k}_{n}(u)}^n)dudr \\\int_{{k}_{n}(t)}^t b(Z_{{k}_{n}(u)}^n)du
    \end{smallmatrix}\right)=\left(\begin{smallmatrix} \frac{(t-k_n(t))^2}{2}b(Z_{{k}_{n}(t)}^n) \\(t-k_n(t)) b(Z_{{k}_{n}(t)}^n)
    \end{smallmatrix}\right).
    \end{align}
    Since $b$ is bounded, we easily  get \eqref{est:Yn-Yknt-1} by the definition \eqref{def:norm-random-0}.

    We move to show \eqref{est:Yn-Yknt-2}. Take $(r_1,r_2)\in[s,t]_\leq^2$. If $r_1\geq k_n(r_2)$, then $Y_{r_2}^n$ and $Y^n_{k_n(r_2)}$ are both $\cF_{r_1}$-measurable and it yields
    \begin{align*}
        (Y_{r_2}^n-\Gamma_{r_2-k_n(r_2)}Y^n_{k_n(r_2)})-\mE^{r_1}(Y_{r_2}^n-\Gamma_{r_2-k_n(r_2)}Y^n_{k_n(r_2)})=0,
    \end{align*}
    hence
    \begin{align}
        \label{est:Yn-case1}\mE^{r_1}|  (Y_{r_2}^n-\Gamma_{r_2-k_n(r_2)}Y^n_{k_n(r_2)})-\mE^{r_1}(Y_{r_2}^n-\Gamma_{r_2-k_n(r_2)}Y^n_{k_n(r_2)})|=0\quad \text{ if } r_1\geq k_n(r_2).
    \end{align}
    Otherwise, i.e. $s\leq r_1\leq k_n(r_2)\leq r_2\leq t$, we have  from \eqref{equ:Yn} and \eqref{est:two-random-norms-1} with taking $\tilde Y$ to be $b(Z_{{k}_{n}(r_2)}^n)$ here and $\hat Y$ to be $b(Z_{r_2}^n)$   and $\sG$ to be $\cF_{r_1}$ that
    \begin{align*}
 & \mE^{r_1}|  (Y_{r_2}^n-\Gamma_{r_2-k_n(r_2)}Y^n_{k_n(r_2)})-\mE^{r_1}(Y_{r_2}^n-\Gamma_{r_2-k_n(r_2)}Y^n_{k_n(r_2)})|\\&=       \mE^{r_1}\Big|\left(\begin{smallmatrix} \frac{(r_2-k_n(r_2))^2}{2}b(Z_{{k}_{n}(r_2)}^n) \\(r_2-k_n(r_2)) b(Z_{{k}_{n}(r_2)}^n)
    \end{smallmatrix}\right)-\mE^{r_1}\left(\begin{smallmatrix} \frac{(r_2-k_n(r_2))^2}{2}b(Z_{{k}_{n}(r_2)}^n) \\(r_2-k_n(r_2)) b(Z_{{k}_{n}(r_2)}^n)
    \end{smallmatrix}\right)\Big|
    \\&\leq (r_2-k_n(r_2)) \mE^{r_1}\Big|\left(\begin{smallmatrix} \frac{(r_2-k_n(r_2))}{2}\big(b(Z_{{k}_{n}(r_2)}^n)-\mE^{r_1}b(Z_{{k}_{n}(r_2)}^n) \big)\\ b(Z_{{k}_{n}(r_2)}^n)-\mE^{r_1}b(Z_{{k}_{n}(r_2)}^n)
    \end{smallmatrix}\right)\Big|
    \\&\leq (r_2-k_n(r_2)) \mE^{r_1}\Big|\left(\begin{smallmatrix} \frac{(r_2-k_n(r_2))}{2}\big(b(Z_{{k}_{n}(r_2)}^n)-b(Z_{r_2}^n) \big)\\ b(Z_{{k}_{n}(r_2)}^n)-b(Z_{r_2}^n)
    \end{smallmatrix}\right)\Big|
    \\&\leq  (r_2-k_n(r_2))\|b\|_{\bC_\bba^\beta }\mE^{r_1}\big|\big(|M_{r_2}-\Gamma_{{r_2-k_n(r_2)}}M_{k_n(r_2)}|_\bba^{\beta\wedge1}+|Y_{{{r_2}}}^n-\Gamma_{{r_2-k_n(r_2)}}Y_{{k_n(r_2)}}^n|_\bba^{\beta\wedge1}\big)\wedge1\big|
    \\&\leq N (r_2-k_n(r_2))(|r_2-k_n(r_2)|^{\frac{\beta}{\alpha}-\epsilon}+|r_2-k_n(r_2)|^{\beta}),
    \end{align*}
    where the last inequality follows from the fact $Y^n$ is Lipschitz continuous since $b$ is bounded.  Clearly $\frac{\beta}{\alpha}\leq \beta$, therefore
    \begin{align} \label{est:Yn-case2}
       \| \mE^{r_1}|  (Y_{r_2}^n-\Gamma_{r_2-k_n(r_2)}Y^n_{k_n(r_2)})&-\mE^{r_1}(Y_{r_2}^n-\Gamma_{r_2-k_n(r_2)}Y^n_{k_n(r_2)})|\|_{L_p(\Omega)|\sG}\nonumber\\&\leq
    N (r_2-k_n(r_2))|r_2-k_n(r_2)|^{\frac{\beta}{\alpha}-\epsilon}\nonumber\\&\leq Nn^{-\frac{1}{2}-(\frac{\beta}{\alpha}\wedge\frac{1}{2})+\epsilon}|r_2-r_1|^\frac{1}{2}\quad\text{ if } r_1<k_n(r_2).
    \end{align}
    The last inequality holds since $r_2-k_n(r_2)\leq n^{-1}\wedge r_2-r_1$.

    Finally  \eqref{est:Yn-case1} together with \eqref{est:Yn-case2} and \eqref{def:norms-random-2} yield \eqref{est:Yn-Yknt-2}.
\end{proof}

\section{Estimates of the  functional}\label{sec:app-term}
Take $Z, Z^n$ from \eqref{eq:SDE} and \eqref{eq:SDE-EM} correspondingly. Notice that for $Y:=Z-M$ and $Y^n:=Z^n-M$  where $M$ is from \eqref{def:noise},
it writes
\begin{align}
    \label{eq:main-error}&(Z_t-Z_t^n)-(Z_s-Z_s^n)=\left(\begin{smallmatrix}(X_t-X_t^n)-(X_s-X_s^n)\\(V_t-V_t^n)-(V_s-V_s^n)
    \end{smallmatrix}\right)=\left(\begin{smallmatrix}\int_s^tV_r-V_r^n dr\\\int_s^tb(Z_r)-\Gamma_{r-k_n(r)}b(Z_{k_n(r)}^n)dr
    \end{smallmatrix}\right)
\nonumber    \\=&\left(\begin{smallmatrix}\frac{1}{3}\int_s^tV_r-V_r^n dr\\\int_s^tb(Y_r+M_r)-b(Y^n_r+M_r) dr
    \end{smallmatrix}\right)
  +\left(\begin{smallmatrix}\frac{1}{3}\int_s^tV_r-V_r^n dr\nonumber \\\int_s^tb(Y^n_{r}+M_r)-b(\Gamma_{r-k_n(r)}Y^n_{k_n(r)}+ M_r)dr \end{smallmatrix}\right)
  \nonumber \nonumber    \\& +\left(\begin{smallmatrix}\frac{1}{3}\int_s^tV_r-V_r^n dr\\\int_s^tb(\Gamma_{r-k_n(r)}Y^n_{k_n(r)}+ M_r)-\Gamma_{r-k_n(r)}b(Y^n_{k_n(r)}+M_{k_n(r)})dr
    \end{smallmatrix}\right)
  \nonumber
    \\=&:\left(\begin{smallmatrix}\frac{1}{3}\int_s^tV_r-V_r^n dr\\S_{s,t}^{b,n,1}
    \end{smallmatrix}\right)+\left(\begin{smallmatrix}\frac{1}{3}\int_s^tV_r-V_r^n dr\\S_{s,t}^{b,n,2}
    \end{smallmatrix}\right)+\left(\begin{smallmatrix}\frac{1}{3}\int_s^tV_r-V_r^n dr\\S_{s,t}^{b,n,3}
    \end{smallmatrix}\right).
\end{align}
\subsection{Estimates of $S^{b,n,1}$, $S^{b,n,2}$}
\begin{lemma}
    \label{lem:auxli-S1S2} Let $p\geq2$.  Let $\gamma_1,\gamma_2,\gamma_3\in(0,1]$ be constants so that
    \begin{align}
        \label{con:constants} 
        \frac{\beta-2}{\alpha}+\gamma_1>0,\quad \gamma_2+\frac{\beta-1}{\alpha}>0.
    \end{align}
    Let $f\in\mC_\bba ^{(1+\alpha)\beta}$, $\beta\in(0,\alpha-1)$. Let $\phi,\psi:[0,1]\time\Omega\rightarrow\mR^{2d}$ be bounded, adapted, measurable functions and suppose that there exists $C_\phi, C_\psi>0$ so that for any $(s,t)\in[0,1]_\leq^2$ we have a.s. 
    \begin{align}
       \mE^s|\phi_t-\mE^s\phi_t|\leq C_\phi|t-s|^{\gamma_1}, \label{con:S1S2-1}
    \\ \mE^s|\psi_t-\mE^s\psi_t|  \leq C_\psi|t-s|^{\gamma_3}.\label{con:S1S2-2}
    \end{align}
    Then there exists a constant $N =N(\alpha,\gamma,p,d,\gamma_1,\gamma_2, \beta)$ so that for any $(S,T)\in[0,1]_\leq^2$ and any $\sigma$-algebra $\sG\subset\cF_S$ we have the bound
    \begin{align}
        \label{est:S1S2}
      \| \int_S^T f(M_r+\phi_r)-&f(M_r+\psi_r)dr\|_{L_p(\Omega)|\sG}\nonumber\\\leq &N\|f\|_{\mC_\bba^{\beta}} |T-S|^{\frac{(\beta-1)}{\alpha}+\gamma_2+1}[\phi-\psi]_{\sC_p^{\gamma_2}|\sG,[S,T]}\nonumber\\&+N\|f\|_{\mC_\bba^{\beta}} |T-S|^{\frac{(\beta-1)}{\alpha}+1}\|\phi-\psi\|_{C_p^0|\sG,[S,T]}\nonumber\\&+NC_\phi\|f\|_{\mC_\bba^{\beta}} |T-S|^{\frac{(\beta-2)}{\alpha}+1+\gamma_1}\|\phi-\psi\|_{C_p^0|\sG,[S,T]}.
    \end{align}
\end{lemma}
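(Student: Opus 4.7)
My plan is to apply the conditional shifted stochastic sewing lemma \cref{lem:SSL} with the target process $\mathcal{A}_t:=\int_S^t[f(M_r+\phi_r)-f(M_r+\psi_r)]\,dr$ and the germ
\[
A_{s,t}:=\int_s^t\mE^s\bigl[f(M_r+\phi_r)-f(M_r+\psi_r)\bigr]\,dr,\qquad (s,t)\in\Delta_{[S,T]}.
\]
Condition~(3) of \cref{lem:SSL} follows routinely from the boundedness of $f$ and right-continuity of the integrand in $r$, which identifies the limit in \eqref{con:SSL3} with $\mathcal{A}_t-\mathcal{A}_s$. The three terms on the right of \eqref{est:S1S2} will arise from the SSL output as follows: the first term (with $[\phi-\psi]_{\sC_p^{\gamma_2}|\sG}$) comes from Condition~(2), while the second and third terms come from Condition~(1).

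\textbf{Condition (1).} I would decompose $\phi_r=\mE^s\phi_r+\tilde\phi_r$ and $\psi_r=\mE^s\psi_r+\tilde\psi_r$, so that the integrand of $A_{s,t}$ splits into the ``frozen'' bracket
\[
P_{r-s}f(M_s+\Gamma_{s-r}\mE^s\phi_r)-P_{r-s}f(M_s+\Gamma_{s-r}\mE^s\psi_r)
\]
(via \eqref{eq:inde-incre}, applicable since the frozen parts are $\mathcal{F}_s$-measurable) plus a fluctuation remainder $R_r$. The mean-value theorem combined with \eqref{est:semi-group-1} bounds the bracket by $\|f\|_{\mC_\bba^\beta}(r-s)^{(\beta-1)/\alpha}|\mE^s(\phi_r-\psi_r)|$, and integrating in $r$ and taking $L^p(\Omega)|\sG$-norms produces the second term of \eqref{est:S1S2} after the embedding $\mC_\bba^{(1+\alpha)\beta}\hookrightarrow\mC_\bba^\beta$. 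For $R_r$, the cancellation $\mE^s\tilde\phi_r=\mE^s\tilde\psi_r=0$ eliminates the first-order contribution in the fluctuations, leaving a second-order remainder naturally controlled by $\|\nabla^2 P_{r-s}f\|_\infty\lesssim\|f\|_{\mC_\bba^\beta}(r-s)^{(\beta-2)/\alpha}$; combining with the a.s.\ bound \eqref{con:S1S2-1} on $\tilde\phi_r$ and the linear factor $\|\phi-\psi\|_{C_p^0|\sG}$ yields the third term carrying $C_\phi$.

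\textbf{Condition (2).} At the midpoint $u=(s+t)/2$,
\[
\delta A_{s,u,t}=\int_u^t\{\mE^s-\mE^u\}\bigl[f(M_r+\phi_r)-f(M_r+\psi_r)\bigr]\,dr,
\]
and, freezing $\phi,\psi$ as above and applying \eqref{eq:inde-incre} at both pivots $s$ and $u$, each $r$-integrand is expressed as a linear combination of semigroup differences of the form $P_{r-s}f(z_1)-(P_{r-u}\Gamma_{u-s}f)(z_2)$, multiplied by factors encoding $\phi-\psi$ and $\mE^s-\mE^u$ differences of the frozen inputs. Applying \eqref{est:semi-group-2} with $\gamma=1$ and a suitable $\mu\in(\epsilon,1+\epsilon]$ supplies a factor $\|f\|_{\mC_\bba^{(1+\alpha)\beta}}(r-s)^{(\beta-1)/\alpha-\mu}(u-s)^{\mu-\epsilon}$, while applying the outer shifted conditional expectation $\mE^{s-(t-s)}$ converts the residual $\phi-\psi$ differences into the seminorm $[\phi-\psi]_{\sC_p^{\gamma_2}|\sG,[S,T]}$ through \eqref{def:norms-random-2}. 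Integrating in $r$ yields the first term of \eqref{est:S1S2} with exponent $(\beta-1)/\alpha+\gamma_2+1$.

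\textbf{Main obstacle.} The delicate point is Condition~(2): one must keep three smallness mechanisms carefully separated so as to avoid double-counting, namely (i) the gain from the semigroup difference via \eqref{est:semi-group-2}, (ii) the gain from the $\sC_p^{\gamma_2}$-fluctuation of $\phi-\psi$ unlocked by $\mE^{s-(t-s)}$, and (iii) the a.s.\ fluctuation bound \eqref{con:S1S2-1} on $\phi$ itself (already used in Condition~(1)). Organizing the decomposition so that these three mechanisms combine cleanly to produce the three terms of \eqref{est:S1S2} with the correct time exponents is the bulk of the work; once \cref{lem:SSL}(1)--(3) are verified, \eqref{con:SSL-est} delivers \eqref{est:S1S2} directly.
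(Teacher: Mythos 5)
There is a genuine gap, and it sits exactly where you flag the ``delicate point'' — but in Condition (1), not Condition (2). Your germ $A_{s,t}=\int_s^t\mE^s[f(M_r+\phi_r)-f(M_r+\psi_r)]\,dr$ keeps the \emph{unfrozen} $\phi_r,\psi_r$ inside $f$, and your plan to control the fluctuation remainder $R_r$ by ``cancellation of the first-order term'' plus $\|\nabla^2P_{r-s}f\|_\infty\lesssim (r-s)^{(\beta-2)/\alpha}$ does not work. First, the first-order term is $\mE^s\big[\nabla f(M_r+\mE^s\phi_r)\cdot\tilde\phi_r\big]$ with $\tilde\phi_r=\phi_r-\mE^s\phi_r$; since $\nabla f(M_r+\mE^s\phi_r)$ contains $M_r$ and is correlated with $\tilde\phi_r$, the identity $\mE^s\tilde\phi_r=0$ does \emph{not} kill it. Second, and more fundamentally, the semigroup gain in \eqref{eq:inde-incre} (i.e.\ $\mE^s f(M_r+\xi)=P_{r-s}f(M_s+\Gamma_{s-r}\xi)$, which is what converts $\|f\|$-norms into $\|P_{r-s}f\|_{\bC^1}$ or $\|\nabla^2P_{r-s}f\|_\infty$ with the factors $(r-s)^{(\beta-1)/\alpha}$, $(r-s)^{(\beta-2)/\alpha}$) is only available when the perturbation $\xi$ is $\cF_s$-measurable; for the non-frozen remainder the only honest bounds use $\|f\|_{\bC^1}$, $\|f\|_{\bC^2}$ of $f$ itself, which are infinite in the intended application ($f=b$ is merely H\"older), or H\"older bounds that are not linear in $\phi-\psi$ and hence useless for the Gronwall-type closure. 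So neither the second nor the third term of \eqref{est:S1S2} can be extracted from Condition (1) with your germ, and the same freezing obstruction reappears when you try to verify Condition (2) by ``freezing $\phi,\psi$ as above''.

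The paper avoids this by choosing the germ with \emph{both} the outer conditioning and the arguments frozen at the shifted time: $A_{s,t}=\mE^{s-(t-s)}\int_s^t f(M_r+\mE^{s-(t-s)}\phi_r)-f(M_r+\mE^{s-(t-s)}\psi_r)\,dr$. Then \eqref{eq:inde-incre} applies exactly, Condition (1) yields only the $\|\phi-\psi\|_{C_p^0|\sG}$ term with exponent $1+\frac{\beta-1}{\alpha}$, and Condition (2) produces \emph{both} remaining terms: $\delta A$ becomes a second-order difference of $P_{r-s_2}f$ at four frozen points, bounded by the elementary inequality $|f(x_1)-f(x_2)-(f(x_3)-f(x_4))|\le |(x_1-x_2)-(x_3-x_4)|\,\|f\|_{\bC^1}+|x_1-x_2||x_1-x_3|\,\|f\|_{\bC^2}$ applied to the smoothed $P_{r-s_2}f$; the $\bC^1$ piece gives the $[\phi-\psi]_{\sC_p^{\gamma_2}|\sG}$ term, and the $\bC^2$ piece combines $(r-s_2)^{(\beta-2)/\alpha}$ with $\mE^{s_1}|\mE^{s_2}\phi_r-\mE^{s_1}\phi_r|\le C_\phi|r-s_1|^{\gamma_1}$ (conditional Jensen plus \eqref{con:S1S2-1}) to give the $C_\phi$ term — note that the shift guarantees $r-s_2\ge (t-s)/2$, so the negative power $(\beta-2)/\alpha$, which may be $\le -1$, causes no integrability problem, another issue your pivots at $s$ and $u$ would face. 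The mismatch between the frozen germ and the true integrand is then disposed of only in Condition (3), where no rate is needed (the paper uses \eqref{est:semi-group-2}, \eqref{con:S1S2-1} and \cref{lem:moment-M} to show the Riemann sums converge in $L_1$). Your identification of the sewing framework and of the general role of the three mechanisms is correct, but without the shifted, frozen-argument germ the key estimates (1) and (2) cannot be verified as you describe.
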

\begin{proof}
The idea is to show \eqref{est:S1S2} via Lemma \ref{lem:SSL} (\cite{BDG-Levy}).    Denote for any $(s,t)\in\Delta_{[S,T]}$,
    \begin{align*}
        A_{s,t}:=\mE^{s-(t-s)}\int_s^tf(M_r+\mE^{s-(t-s)}\phi_r)-f(M_r+\mE^{s-(t-s)}\psi_r)dr,\\
        \cA_{t}:=\int_0^tf(M_r+\phi_r)-f(M_r+\psi_r)dr.
    \end{align*}
    In order to apply Lemma \ref{lem:SSL} we verify its conditions. Observe that from \eqref{eq:inde-incre}
    \begin{align*}
        | A_{s,t}|=&\Big|\int_s^tP_{r-(s-(t-s))}f(M_{s-(t-s)}+\Gamma_{s-(t-s)-r}\mE^{s-(t-s)}\phi_r)\\&\qquad-P_{r-(s-(t-s))}f(M_{s-(t-s)}+\Gamma_{s-(t-s)-r}\mE^{s-(t-s)}\psi_r)dr\Big|
        \\\leq& \int_s^t\|P_{r-(s-(t-s))}f\|_{\bC^1}|\mE^{s-(t-s)}(\phi_r-\psi_r)| dr
        .
    \end{align*}
  \eqref{est:semi-group-1} taking $\gamma=1$ together with the property of conditional expectation  which implies  $\|\mE^{s-(t-s)}(\phi_r-\psi_r)\|_{L_p(\Omega)|\sG}\leq \||\phi_r-\psi_r| \|_{L_p(\Omega)|\sG}$ for $\sG\subset\cF_S\subset\cF_{s-(t-s)}$ gives us
    \begin{align*}
       & \|A_{s,t}\|_{L_p(\Omega)|\sG}\\\lesssim& \|f\|_{\bC_\bba^{\beta}}\int_s^t (r-s)^{\frac{(\beta-1)}{\alpha}}\|\mE^{s-(t-s)}(\phi_r-\psi_r) \|_{L_p(\Omega)|\sG}dr
        \\\lesssim& \|f\|_{\bC_\bba^{\beta}}(t-s)^{1+\frac{(\beta-1)}{\alpha}}\sup_{r\in[S,T]}\|\mE^{s-(t-s)}(\phi_r-\psi_r)\|_{L_p(\Omega)|\sG}
         \\\lesssim& \|f\|_{\bC_\bba^{\beta}}(t-s)^{1+\frac{(\beta-1)}{\alpha}}\sup_{r\in[S,T]}\|\phi_r-\psi_r \|_{L_p(\Omega)|\sG}
         \\\leq&N\|f\|_{\bC_\bba^{\beta}}(t-s)^{1+\frac{(\beta-1)}{\alpha}}\|\phi-\psi\|_{C_p^0|\sG,[S,T]}.
    \end{align*}
    Since $\beta>\frac{2-\alpha}{2}$ we have $1+\frac{(\beta-1)}{\alpha}>\frac{1}{2}$. Hence Lemma \ref{lem:SSL} (1) holds with $K_1=N\|f\|_{\bC_\bba^{\beta}}\|\phi-\psi\|_{C_p^0|\sG,[S,T]}.$

    Now let us verify Lemma \ref{lem:SSL} (2).  Let $(s,t)\in\Delta_{[S,T]}$ and $u:=\frac{t+s}{2}$, for convenience also denote $s_1:=s-(t-s)$, $s_2:=s-(u-s)$, $s_3:=s$,  $s_4:=u$, $s_5:=t$ so that one has $s_1\leq s_2\leq s_3\leq  s_4\leq s_5$.
    We can write, notice $u-(t-s)=s=s_3$,
    \begin{align*}
        \mE^{s-(t-s)}\delta A_{s,u,t}=&\mE^{s_1}\delta A_{s_3,s_4,s_5}
        \\=& \mE^{s_1}\int_{s_3}^{s_4} \big(f(M_r+\mE^{s_1}\phi_r)-f(M_r+\mE^{s_1}\psi_r) \\&\qquad-(f(M_r+\mE^{s_2}\phi_r)-f(M_r+\mE^{s_2}\psi_r))\big)dr
        \\&+\mE^{s_1}\int_{s_4}^{s_5}\big(f(M_r+\mE^{s_1}\phi_r)-f(M_r+\mE^{s_1}\psi_r) \\&\qquad-(f(M_r+\mE^{s_3}\phi_r)-f(M_r+\mE^{s_3}\psi_r))\big)dr=:S_1+S_2.
    \end{align*}
    For $S_1$, we have from \eqref{prop:Markov} and \eqref{eq:inde-incre} that
    \begin{align*}
        S_1=& \mE^{s_2}\mE^{s_1}\int_{s_3}^{s_4} \big(f(M_r+\mE^{s_1}\phi_r)-f(M_r+\mE^{s_1}\psi_r) \\&\qquad-(f(M_r+\mE^{s_2}\phi_r)-f(M_r+\mE^{s_2}\psi_r))\big)dr
        \\=&\mE^{s_1}\int_{s_3}^{s_4} \big(P_{r-s_2}f(M_{s_2}+\Gamma_{s_2-r}\mE^{s_1}\phi_r)-P_{r-s_2}f(M_{s_2}+\Gamma_{s_2-r}\mE^{s_1}\psi_r) \\&\qquad-(P_{r-s_2}f(M_{s_2}+\Gamma_{s_2-r}\mE^{s_2}\phi_r)-P_{r-s_2}f(M_{s_2}+\Gamma_{s_2-r}\mE^{s_2}\psi_r))\big)dr.
    \end{align*}
    We apply \eqref{est:semi-group-1} twice, take $\gamma=1$ first and then $\gamma=2$, together with an elementary inequality: for any $x_1,x_2,x_3,x_4\in\mR^{2d}$
    \begin{align*}
        |f(x_1)-f(x_2)-&\big(f(x_3)-f(x_4)\big)|\\&\leq |(x_1-x_2)-(x_3-x_4)| \|f\|_{\bC^1}+ |x_1-x_2||x_1-x_3|\|f\|_{\bC^2},
    \end{align*}
    then get
    \begin{align}\label{est:S1-1}
        |S_1|\leq &\|f\|_{\bC_\bba^\beta}\int_{s_3}^{s_4} (r-s_2)^{\frac{(\beta-1)\wedge0}{\alpha}}\mE^{s_1}|\mE^{s_1}(\phi_r-\psi_r)-\mE^{s_2}(\phi_r-\psi_r)|_\bba dr\nonumber\\&+\|f\|_{\bC_\bba^\beta}\int_{s_3}^{s_4} (r-s_2)^{\frac{(\beta-2)\wedge0}{\alpha}}|\mE^{s_1}(\phi_r-\psi_r)|\mE^{s_1}|(\mE^{s_2}\phi_r-\mE^{s_1}\phi_r)| dr.
    \end{align}
    Now we apply conditional Jensen’s inequality and \eqref{con:S1S2-1} then obtain
    \begin{align}
        \label{est:S1-2}\mE^{s_1}|(\mE^{s_2}\phi_r-\mE^{s_1}\phi_r)|&=\mE^{s_1} |\mE^{s_2}(\phi_r-\mE^{s_1}\phi_r)|\leq \mE^{s_1}|(\phi_r-\mE^{s_1}\phi_r)|\leq C_\phi|r-s_1|^{\gamma_1}.
    \end{align}
    Likewise,
    \begin{align}
        \label{est:S1-3}
        \mE^{s_1}|\mE^{s_1}(\phi_r-\psi_r)-\mE^{s_2}(\phi_r-\psi_r)|=&\mE^{s_1}|\mE^{s_1}\big((\phi_r-\psi_r)-\mE^{s_2}(\phi_r-\psi_r)\big)|
        \nonumber\\\leq& \mE^{s_1}|\big((\phi_r-\psi_r)-\mE^{s_1}(\phi_r-\psi_r)\big)|.
    \end{align}
    Putting \eqref{est:S1-1}, \eqref{est:S1-2}, \eqref{est:S1-3} together with the property of conditional expectation for $\sG\subset\cF_{s_1}$ and applying Minkowski inequality we get,
    \begin{align}
        \label{est:S-1-final}
          \|S_1\|_{L_p(\Omega)|\sG}\leq& N \|f\|_{\bC_\bba^{\beta}}\int_{s_3}^{s_4} \big((r-s_2)^{\frac{(\beta-1)}{\alpha}}|s_4-s_1|^{\gamma_2}[(\phi-\psi)]_{\sC_p^{\gamma_2}|\sG,[S,T]}\nonumber\\ &\quad\qquad\qquad+ (r-s_2)^{\frac{(\beta-2)}{\alpha}}\|\phi-\psi\|_{C_p^0|\sG,[S,T]}|r-s_1|^{\gamma_1}\big)dr  \nonumber\\ \leq &N \|f\|_{\bC_\bba^{\beta}}  \big((s_4-s_2)^{\frac{(\beta-1)}{\alpha}+\gamma_2+1}[(\phi-\psi)]_{\sC_p^{\gamma_2}|\sG,[S,T]}\nonumber\\ &\quad\qquad\qquad+ (s_3-s_2)^{\frac{(\beta-2)}{\alpha}}\|\phi-\psi\|_{C_p^0|\sG,[S,T]}|s_4-s_1|^{\gamma_1}(s_4-s_3)\big)
          \nonumber\\ \leq &N \|f\|_{\bC_\bba^{\beta}} \big((t-s)^{\frac{(\beta-1)}{\alpha}+\gamma_2+1}[(\phi-\psi)]_{\sC_p^{\gamma_2}|\sG}\nonumber\\ &\quad\qquad\qquad+(t-s)^{\frac{(\beta-2)}{\alpha}+1+\gamma_1}\|\phi-\psi\|_{C_p^0|\sG,[S,T]}\big).
    \end{align}
    In the last inequality we applies the fact that $s_3-s_2=s_4-s_3=\frac{t-s}{2}$ and $s_4-s_1=(u-s)+(t-s)=\frac{3(t-s)}{2}$. We get the same estimate for $S_2$ therefore the following holds
    \begin{align}
        \label{est:deltaA}
     \|\mE^{s-(t-s)}\delta A_{s,u,t}\|_{L_p(\Omega)|\sG} &\leq  N \|f\|_{\bC_\bba^{\beta}} \big((t-s)^{\frac{(\beta-1)}{\alpha}+\gamma_2+1}[(\phi-\psi)]_{\sC_p^{\gamma_2}|\sG}\nonumber\\ &\quad\qquad\qquad+(t-s)^{\frac{(\beta-2)}{\alpha}+1+\gamma_1}\|\phi-\psi\|_{C_p^0|\sG,[S,T]}\big).
    \end{align}
Moreover, according to our assumption \eqref{con:constants}, $\frac{(\beta-2)}{\alpha}+1+\gamma_1>1$ and $\frac{(\beta-1)}{\alpha}+\gamma_2+1>1$.    It implies that Lemma \ref{lem:SSL} (2) holds with $K_2= N \|f\|_{\bC_\bba^{\beta}}([(\phi-\psi)]_{\sC_p^{\gamma_2}|\sG}+\|\phi-\psi\|_{C_p^0|\sG,[S,T]})$.

    Let us move to verify Lemma \ref{lem:SSL} (3).  Denote $t_i=s+\frac{(t-s)}{m}i$ for $m\in\mN, i:=\{0,1,\ldots,m\}$ and $(s,t)\in[S,T]_\leq^2$. Since $t_{i}-(t_{i+1}-t_{i})=t_{i-1}$, we have
    \begin{align}
        \label{est:A}\cA_t-\cA_s-\sum_{i=1}^{m-1}&A_{s+i\frac{t-s}{m},s+(i+1)\frac{t-s}{m}} \nonumber\\ =&\cA_t-\cA_s-\sum_{i=1}^{m-1}A_{t_{i},t_{i+1}}
        \nonumber\\\leq&\sum_{i=1}^{m-1}\int_{t_{i}}^{t_{i+1}}|f(M_r+\phi_r)-P_{r-t_{i-1}}f(M_{t_{i-1}}+\Gamma_{t_{i-1}-r}\mE^{t_{i-1}}\phi_r)|dr
         \nonumber\\&+\sum_{i=1}^{m-1}\int_{t_{i}}^{t_{i+1}}|f(M_r+\psi_r)-P_{r-t_{i-1}}f(M_{t_{i-1}}+\Gamma_{t_{i-1}-r}\mE^{t_{i-1}}\psi_r)|dr
         \nonumber\\&+\int_{t_{0}}^{t_{1}}|f(M_r+\phi_r)|+|f(M_r+\psi_r)|dr=:S_3+S_4+S_5.
    \end{align}
    For $S_3$ by triangle inequality and \eqref{est:semi-group-2} with taking $\mu=\frac{\beta}{\alpha}$ we can write
    \begin{align*}
       |f(&M_r+\phi_r)-P_{r-t_{i-1}}f(M_{t_{i-1}}+\Gamma_{t_{i-1}-r}\mE^{t_{i-1}}\phi_r)| \\\lesssim&  |f(M_r+\phi_r)-f(\Gamma_{r-t_{i-1}}M_{t_{i-1}}+\phi_r)| \\&+|f(\Gamma_{r-t_{i-1}}M_{t_{i-1}}+\phi_r)-f(\Gamma_{r-t_{i-1}}M_{t_{i-1}}+\mE^{t_{i-1}}\phi_r)|
       \\&+ |\Gamma_{r-t_{i-1}}f(M_{t_{i-1}}+\Gamma_{t_{i-1}-r}\mE^{t_{i-1}}\phi_r)-P_{r-t_{i-1}}f(M_{t_{i-1}}+\Gamma_{t_{i-1}-r}\mE^{t_{i-1}}\phi_r)|
       \\\lesssim& \|f\|_{\bC_\bba^{\beta}}|M_r-\Gamma_{r-t_{i-1}}M_{t_{i-1}}|_\bba^{\beta}+
        \|f\|_{\bC^1}|\phi_r-\mE^{t_{i-1}}\phi_r|+\|f\|_{\bC_\bba^{{(1+\alpha)\beta}}}|r-t_{i-1}|^{\frac{\beta}{\alpha}-\epsilon}.
    \end{align*}
    We further following from   \eqref{con:S1S2-1} and \eqref{est:moment-M} then  have
    \begin{align}\label{est:S3-m}
       \| S_3\|_{L_1(\Omega)}\leq  &N\|f\|_{\bC_\bba^{\beta}}|t-s|\sup_{i=0,1,\ldots,m}|t_{i+1}-{t_{i-1}}|^{\frac{\beta}{\alpha}}+N\|f\|_{\bC^{1}}|t-s|\sup_{i=0,1,\ldots,m}|t_{i+1}-{t_{i-1}}|^{\gamma_1}\nonumber\\&+N\|f\|_{\bC_\bba^{(1+\alpha)\beta}}|t-s|(\sup_{i}|t_{i+1}-t_{i-1}|^{\frac{\beta}{\alpha}-\epsilon})\nonumber\\\lesssim&N(\|f\|_{\bC^{1}}+\|f\|_{\bC_\bba^{(1+\alpha)\beta}})|t-s|m^{-\big(\frac{\beta}{\alpha}\wedge \gamma_1\big)+\epsilon}.
    \end{align}
    In a similar manner we get
    \begin{align}\label{est:S4-m}
          \| S_4\|_{L_1(\Omega)}\leq  &N(\|f\|_{\bC^{1}}+\|f\|_{\bC_\bba^{(1+\alpha)\beta}})|t-s|m^{-\big(\frac{\beta}{\alpha}\wedge \gamma_3\big)+\epsilon}.
    \end{align}
    For  $S_5$ we easily get
    \begin{align}\label{est:S5-m}
          \| S_5\|_{L^1(\Omega)}\leq  &N\|f\|_\infty m^{-1}.
    \end{align}
    Plug \eqref{est:S3-m}, \eqref{est:S4-m} and \eqref{est:S5-m}  into \eqref{est:A} we can conclude that
    \begin{align*}
        \cA_t-\cA_s=\lim_{m\rightarrow\infty}\sum_{i=1}^{m-1}A_{s+i\frac{t-s}{m},s+(i+1)\frac{t-s}{m}} \quad\text{ in } L_1(\Omega).
    \end{align*}
    Hence Lemma \ref{lem:SSL} (3) is verified.

    Finally all of the conditions from Lemma \ref{lem:SSL} are verified and \eqref{est:S1S2} follows from \eqref{con:SSL-est}.
\end{proof}

\begin{corollary}
    \label{cor:S1S2}
For any $0<\epsilon$ and $\beta\in(1-\frac{\alpha}{2},(1+\alpha)(\alpha-1))$ there exists a constant $N=N(\alpha,p,\beta,d,\|b\|_{\bC_\bba^{\beta}},\epsilon)$ independent of $n$ so that for any $(s,t)\in[0,1]_\leq^2$ and any $\sigma$-algebra $\sG\subset\cF_{s}$ the following holds
     \begin{align}
         \label{est:cor-S1} \|S_{s,t}^{b,n,1}\|_{L_p(\Omega)|\sG} \leq& N|t-s|^{\frac{(\beta-1)}{\alpha}+\frac{3}{2}}[Y-Y^n]_{C_p^{\frac{1}{2}}|\sG,[s,t]}+N |t-s|^{\frac{(\beta-1)}{\alpha}+1}[Y-Y^n]_{C_p^0|\sG,[s,t]}; \\
         \label{est:cor-S2} \|S_{s,t}^{b,n,2}\|_{L_p(\Omega)|\sG}\leq &N |t-s|^{\frac{(\beta-1)}{\alpha}+\frac{3}{2}}n^{-(\frac{1}{2}+\frac{\beta}{\alpha}\wedge\frac{1}{2})+\epsilon}+N|t-s|^{\frac{(\beta-1)}{\alpha}+1}n^{-1}.
     \end{align}
\end{corollary}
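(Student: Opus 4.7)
The plan is to read off both bounds as direct applications of Lemma \ref{lem:auxli-S1S2} with $f=b$, combined with the a priori controls from Lemma \ref{lem:apriori} (to feed \eqref{con:S1S2-1}-\eqref{con:S1S2-2}) and, for the second functional, the one-step discretization estimates from Lemma \ref{lem:est-on-Yn}. In both cases I take $\gamma_2=\tfrac12$ and let $\gamma_1=1+\tfrac{\beta}{\alpha}-\epsilon$, which is the exponent produced by Lemma \ref{lem:apriori} on taking $p$ large and $\epsilon$ small. The two constraints in \eqref{con:constants} are then automatic: $\gamma_2+\tfrac{\beta-1}{\alpha}>0$ is exactly $\beta>1-\tfrac{\alpha}{2}$ (our standing assumption), and $\tfrac{\beta-2}{\alpha}+\gamma_1>0$ reduces to $2\beta+\alpha>2+\alpha\epsilon$, which again follows from $\beta>1-\tfrac{\alpha}{2}$.

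For \eqref{est:cor-S1} I set $\phi=Y$, $\psi=Y^n$. Both \eqref{est:apaiori-1} and \eqref{est:apaiori-2} supply \eqref{con:S1S2-1}-\eqref{con:S1S2-2} with the chosen $\gamma_1$ and with $C_\phi,C_\psi$ depending only on $\|b\|_{\bC^\beta_\bba},\alpha,\beta$, hence independent of $n$. Using the semi-norm comparison \eqref{est:two-random-norms-2} to pass from $[\,\cdot\,]_{\sC_p^{1/2}}$ to $[\,\cdot\,]_{C_p^{1/2}}$, the first two terms of \eqref{est:S1S2} reproduce the right-hand side of \eqref{est:cor-S1}. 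The third term has time exponent $\tfrac{\beta-2}{\alpha}+1+\gamma_1=2+\tfrac{2(\beta-1)}{\alpha}-\epsilon$, which exceeds $\tfrac{\beta-1}{\alpha}+1$ by $1+\tfrac{\beta-1}{\alpha}-\epsilon>0$ (using $\beta>1-\alpha$, a consequence of $\beta>1-\tfrac{\alpha}{2}$), so on $|t-s|\leq 1$ it is absorbed into the second term.

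For \eqref{est:cor-S2} I set $\phi=Y^n$ and $\psi_t:=\Gamma_{t-k_n(t)}Y^n_{k_n(t)}$, use \eqref{est:apaiori-2} to obtain \eqref{con:S1S2-1} for $\phi=Y^n$ with the same $\gamma_1$, and substitute Lemma \ref{lem:est-on-Yn} for the two norms of $\phi-\psi$ appearing on the right of \eqref{est:S1S2}: \eqref{est:Yn-Yknt-1} gives $\|\phi-\psi\|_{C_p^0|\sG,[s,t]}\leq N n^{-1}$, while \eqref{est:Yn-Yknt-2} gives $[\phi-\psi]_{\sC_p^{1/2}|\sG,[s,t]}\leq N n^{-1/2-(\beta/\alpha\wedge 1/2)+\epsilon}$. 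The first and second terms of \eqref{est:S1S2} then become
\begin{equation*}
N|t-s|^{\frac{\beta-1}{\alpha}+\frac{3}{2}}n^{-\frac{1}{2}-(\frac{\beta}{\alpha}\wedge\frac{1}{2})+\epsilon}\quad\text{and}\quad N|t-s|^{\frac{\beta-1}{\alpha}+1}n^{-1},
\end{equation*}
which is exactly \eqref{est:cor-S2}. The third term carries the factor $n^{-1}$ from $\|\phi-\psi\|_{C_p^0|\sG,[s,t]}$ and is absorbed in time exponent into the second as in the first case.

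The only delicate step is the choice of $\gamma_1$: the naive choice $\gamma_1=1$ coming from boundedness of $b$ fails to satisfy $\tfrac{\beta-2}{\alpha}+\gamma_1>0$ in the full range $\beta>1-\tfrac{\alpha}{2}$, $\alpha\in(1,2)$, so one must invoke the sharpened Lemma \ref{lem:apriori} to push $\gamma_1$ past $1$ and simultaneously gain enough room to absorb the third term of \eqref{est:S1S2}. Everything else is bookkeeping of exponents and the norm comparison \eqref{est:two-random-norms-2}.
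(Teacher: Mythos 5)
Your proposal follows the paper's own route: the corollary is proved by applying \cref{lem:auxli-S1S2} with $f=b$, taking $\phi=Y$, $\psi=Y^n$ for \eqref{est:cor-S1} and $\phi=Y^n$, $\psi_\cdot=\Gamma_{\cdot-k_n(\cdot)}Y^n_{k_n(\cdot)}$ for \eqref{est:cor-S2}, with $\gamma_2=\tfrac12$, with $\gamma_1=1+\tfrac{\beta}{\alpha}-\epsilon$ supplied by \cref{lem:apriori}, feeding in \cref{lem:est-on-Yn} for the two norms of $\phi-\psi$, and absorbing the third term of \eqref{est:S1S2} exactly as you describe; your exponent bookkeeping and the use of \eqref{est:two-random-norms-2} are correct.

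Two points need fixing, though neither breaks the argument. First, the hypothesis \eqref{con:S1S2-1} is a conditional $L_1$ bound, so the exponent $1+\tfrac{\beta}{\alpha}-\epsilon$ comes from \cref{lem:apriori} applied with $p=1$, not with ``$p$ large'': for large $p$ the lemma only yields $1+\tfrac1p-\epsilon\approx 1$, which, as you yourself observe, fails the constraint $\tfrac{\beta-2}{\alpha}+\gamma_1>0$. Second, in the proof of \eqref{est:cor-S2} you never verify the hypothesis \eqref{con:S1S2-2} for $\psi_t=\Gamma_{t-k_n(t)}Y^n_{k_n(t)}$; this is a condition of \cref{lem:auxli-S1S2} and is not covered by \cref{lem:est-on-Yn}, which only controls $\phi-\psi$. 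The paper checks it by a short case split: if $k_n(t)\le s$ then $\psi_t$ is $\cF_s$-measurable and $\mE^s|\psi_t-\mE^s\psi_t|=0$; if $s<k_n(t)$ one uses linearity ($\mE^s\Gamma_{t-k_n(t)}Y^n_{k_n(t)}=\Gamma_{t-k_n(t)}\mE^s Y^n_{k_n(t)}$), the elementary bound $|\Gamma_h z|\lesssim |z|$ for $h\le 1$, and \eqref{est:apaiori-2} to obtain $\mE^s|\psi_t-\mE^s\psi_t|\le N|k_n(t)-s|\le N|t-s|$. With these two routine corrections your argument coincides in substance with the paper's proof.
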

\begin{proof}
 We apply Lemma \ref{lem:auxli-S1S2} to show the desired results.

    For \eqref{est:cor-S1}, we take $\phi=Y$ and $\psi=Y^n$, $\gamma_1=\gamma_3=1+\frac{\beta}{\alpha}-\epsilon,\gamma_2=\frac{1}{2}$ (choose $\epsilon$ to be sufficiently small so that $\beta>1-\frac{\alpha}{2}+\epsilon\alpha$)  in \eqref{con:S1S2-1} and  \eqref{con:S1S2-2} correspondingly. Hence the condition \eqref{con:S1S2-1} of Lemma \ref{lem:auxli-S1S2}  is verified by \eqref{est:apaiori-1} and \eqref{est:apaiori-2} with taking $p=1$ therein. We see that \eqref{est:cor-S1} holds because of \eqref{est:S1S2}.

    For  \eqref{est:cor-S2}, we take $\phi_t=Y^n_t$ and $\psi_t=\Gamma_{t-k_n(t)}Y_{k_n(t)}^n$, and $\gamma_1,\gamma_2,\gamma_3, \epsilon$ the same as above. The condition \eqref{con:S1S2-1} holds following from \eqref{est:apaiori-2} with taking $p=1$. For \eqref{con:S1S2-2}, if $0\leq k_n(t)\leq s < t \leq 1$ then
    \begin{align*}
    \mE^s|\Gamma_{t-k_n(t)}Y_{k_n(t)}^n-\mE^s(\Gamma_{t-k_n(t)}Y_{k_n(t)}^n)|=\mE^s|\Gamma_{t-k_n(t)}(|Y_{k_n(t)}^n-\mE^sY_{k_n(t)}^n)|=0;
    \end{align*}
    if $0 \leq s< k_n(t)\leq  t \leq 1$ then by \eqref{est:apaiori-2} and \eqref{0222:00}
    \begin{align*}
        \mE^s|\Gamma_{t-k_n(t)}Y_{k_n(t)}^n-\mE^s(\Gamma_{t-k_n(t)}Y_{k_n(t)}^n)|=&\mE^s|\Gamma_{t-k_n(t)}(|Y_{k_n(t)}^n-\mE^sY_{k_n(t)}^n)|\\\lesssim&\mE^s|Y_{k_n(t)}^n-\mE^sY_{k_n(t)}^n|\leq N|k_n(t)-s|\leq N|t-s|,
    \end{align*}
    which shows that \eqref{con:S1S2-2} holds. Hence from \eqref{est:S1S2} we get that
    \begin{align*}
        \|S_{s,t}^{b,n,2}\|_{L_p(\Omega)|\sG}\leq &N|t-s|^{\frac{(\beta-1)}{\alpha}+\frac{3}{2}}[Y^n-\Gamma_{\cdot-k_n(\cdot)}Y^n_{k_n(\cdot)}]_{\sC_p^{\frac{1}{2}}|\sG,[s,t]}\\&+N |t-s|^{\frac{(\beta-1)}{\alpha}+1}[Y^n-\Gamma_{\cdot-k_n(\cdot)}Y^n_{k_n(\cdot)}]_{C_p^0|\sG,[s,t]}.
    \end{align*}
    In the end we get \eqref{est:cor-S2} after plugging \eqref{est:Yn-Yknt-1} and \eqref{est:Yn-Yknt-2} into above inequality.  The proof completes.
\end{proof}
\subsection{Estimates of $S^{b,n,3}$}
\begin{lemma}
    \label{lemm:est-S3} Let $\phi:[0,1]\times\Omega\rightarrow\mR^{2d}$ be a bounded measurable function. Let    $f\in\mC_\bba ^{(1+\alpha)\beta}$ for $\beta\in (0,\alpha-1)$. Further assume
    \begin{enumerate}
        \item there exist constants $\gamma_1,\gamma_\phi>0$ so that
        \begin{align}
        \label{con:constant-S3-0}
        \gamma_1&>\frac{1}{2}+\frac{1}{\alpha}+\frac{\beta}{\alpha}\wedge\frac{1}{2}-\frac{\beta}{\alpha};\\
           \label{con:constant-S3-1}
           \mE^s|\mE^t\phi_r-\mE^s\phi_r|&\leq \gamma_\phi|r-s|^{\gamma_1}, \quad  (s,r)\in[0,1]_\leq^2, t\in[s,1];
        \end{align}
        \item  $\phi_{k_n(t)}$ is $\cF_{(k_n(t)-n^{-1})\vee0}$ measurable,  $n\in\mN$;
        \item $\phi\equiv0$ or $\beta>0$.
    \end{enumerate}
    Then for any $0<\epsilon<\frac{1}{2}$ there exists a constant $N=N(\alpha,\beta,d,\gamma_1,\epsilon)$ independent of $n$ so that for any $(S,T)\in [0,1]_\leq^2$ and any $\sigma$-algebra $\sG\subset\cF_{k_n(s)}$ the following holds
    \begin{align}
        \label{est:S3-final}
         \| \int_S^T &f(M_r+\Gamma_{r-k_n(r)}\phi_{k_n(r)})-\Gamma_{r-k_n(r)}f(M_{k_n(r)}+\phi_{k_n(r)})dr\|_{L_2(\Omega)|\sG}\nonumber\\&\leq N \|f\|_{\mC_\bba^{(1+\alpha)\beta}}(1+\gamma_\phi)n^{-(\frac{1}{2}+\frac{\beta}{\alpha}\wedge\frac{1}{2})+2\epsilon}|T-S|^{\frac{1}{2}+\epsilon}.
    \end{align}
\end{lemma}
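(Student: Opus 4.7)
The plan is to apply the stochastic sewing lemma \cref{lem:SSL} (with $p=2$) to the process $\cA_t:=\int_0^t[f(M_r+\Gamma_{r-k_n(r)}\phi_{k_n(r)})-\Gamma_{r-k_n(r)}f(M_{k_n(r)}+\phi_{k_n(r)})]dr$. The central identity, coming from the Markov property \eqref{eq:inde-incre}, is that for any $\sigma\leq r$ and any $\cF_\sigma$-measurable $\eta$,
\begin{equation*}
\mE^\sigma\bigl[f(M_r+\Gamma_{r-k_n(r)}\eta)-\Gamma_{r-k_n(r)}f(M_{k_n(r)}+\eta)\bigr]=\bigl[P_{r-\sigma}f-P_{k_n(r)-\sigma}\Gamma_{r-k_n(r)}f\bigr](M_\sigma+\Gamma_{\sigma-k_n(r)}\eta),
\end{equation*}
so that the semigroup-perturbation estimate \eqref{est:semi-group-2}, applied with $t':=r-\sigma$, $s':=k_n(r)-\sigma$ and $t'-s'=r-k_n(r)\leq n^{-1}$, produces the decisive factor $n^{-(\mu-\epsilon)}$ once an interpolation parameter $\mu$ is fixed. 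Accordingly, for $(s,t)\in\Delta_{[S,T]}$ and $\sigma:=s-(t-s)$, I take the germ
\begin{equation*}
A_{s,t}:=\mE^\sigma\int_s^t\bigl[f(M_r+\Gamma_{r-k_n(r)}\mE^\sigma\phi_{k_n(r)})-\Gamma_{r-k_n(r)}f(M_{k_n(r)}+\mE^\sigma\phi_{k_n(r)})\bigr]dr,
\end{equation*}
which by the above identity equals $\int_s^t[P_{r-\sigma}f-P_{k_n(r)-\sigma}\Gamma_{r-k_n(r)}f](M_\sigma+\Gamma_{\sigma-k_n(r)}\mE^\sigma\phi_{k_n(r)})dr$.

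For condition~(1), I apply \eqref{est:semi-group-2} pointwise with $\mu:=\tfrac{1}{2}+\tfrac{\beta}{\alpha}\wedge\tfrac{1}{2}-\epsilon$ (admissible since $\beta/\alpha\in(0,1)$ under hypothesis~(3)), use $r-\sigma\geq t-s$ and $r-k_n(r)\leq n^{-1}$, and integrate over $r\in[s,t]$ to obtain
\begin{equation*}
\|A_{s,t}\|_{L_2(\Omega)|\sG}\leq N\|f\|_{\mC_\bba^{(1+\alpha)\beta}}(t-s)^{1+\frac{\beta}{\alpha}-\mu}n^{-(\mu-\epsilon)}\leq K_1(t-s)^{1/2+\epsilon},
\end{equation*}
with $K_1=N\|f\|_{\mC_\bba^{(1+\alpha)\beta}}n^{-(\frac{1}{2}+\frac{\beta}{\alpha}\wedge\frac{1}{2})+2\epsilon}$, which is precisely the target rate. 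The case $\phi\equiv 0$ is identical with $\eta=0$ throughout.

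For condition~(2), set $u:=(s+t)/2$ and $\sigma_1:=\sigma$, $\sigma_2:=s-(t-s)/2$, $\sigma_3:=s$. Writing $H_\tau(r)$ for the integrand of $A_{s,t}$ with $\sigma$ replaced by $\tau$, the coboundary $\delta A_{s,u,t}$ expands as a sum of differences $H_{\sigma_i}(r)-H_{\sigma_j}(r)$ integrated over sub-intervals of $[s,t]$. Each such difference I split into (a) a base-time perturbation of the semigroup difference by $|\sigma_i-\sigma_j|\lesssim t-s$, controlled via \eqref{est:semi-group-2} and \cref{lem:moment-M}; and (b) an argument perturbation $\mE^{\sigma_i}\phi_{k_n(r)}\leftrightarrow\mE^{\sigma_j}\phi_{k_n(r)}$, controlled via the $\bC^1$-bound from \eqref{est:semi-group-1} together with hypothesis \eqref{con:constant-S3-1}, which produces a factor $\gamma_\phi(t-s)^{\gamma_1}$. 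Taking $\mE^\sigma$ outside and integrating in $r$ gives $\|\mE^\sigma\delta A_{s,u,t}\|_{L_2(\Omega)|\sG}\leq K_2(t-s)^{1+\epsilon_2}$ with $K_2\lesssim\|f\|_{\mC_\bba^{(1+\alpha)\beta}}(1+\gamma_\phi)n^{-(\frac{1}{2}+\frac{\beta}{\alpha}\wedge\frac{1}{2})+2\epsilon}$; the threshold $\gamma_1>\tfrac{1}{2}+\tfrac{1}{\alpha}+\tfrac{\beta}{\alpha}\wedge\tfrac{1}{2}-\tfrac{\beta}{\alpha}$ from \eqref{con:constant-S3-0} is precisely what forces the exponent $1+\epsilon_2$ strictly above $1$. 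This balancing of semigroup-perturbation exponents against the time regularity of $\phi$ is the main technical obstacle.

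Condition~(3) of \cref{lem:SSL}, that Riemann sums of $A$ converge to $\cA_t-\cA_s$ in probability, is verified as in Step~3 of the proof of \cref{lem:auxli-S1S2}: the pointwise discrepancy between the integrand of $A_{t_i,t_{i+1}}$ and $g_r$ is controlled using the $\bC_\bba^\beta$-regularity of $f$, \cref{lem:moment-M}, and \eqref{con:constant-S3-1}, and vanishes with the mesh size. Plugging $K_1,K_2$ into \eqref{con:SSL-est} and using $T-S\leq 1$ to absorb the subordinate $(T-S)^{1+\epsilon_2}$-contribution into $(T-S)^{1/2+\epsilon}$ yields \eqref{est:S3-final}.
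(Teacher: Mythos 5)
Your overall strategy (stochastic sewing with the same germ $A_{s,t}$, and the semigroup-difference estimate \eqref{est:semi-group-2} producing the factor $n^{-\mu+\epsilon}$ from $r-k_n(r)\le n^{-1}$) is the paper's strategy, but the proposal has a genuine gap in the near-diagonal regime. Your ``central identity'' is only valid when $k_n(r)\ge \sigma$: the operator $P_{k_n(r)-\sigma}$ is meaningless for $k_n(r)<\sigma$, and for $(s,t)\in\Delta_{[S,T]}$ with $t-s\lesssim n^{-1}$ one indeed has $k_n(r)<\sigma=s-(t-s)$ for $r$ near $s$ (then $\mE^{\sigma}\Gamma_{r-k_n(r)}f(M_{k_n(r)}+\cdot)$ is just the function itself, with no smoothing). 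So your verification of condition (1) of \cref{lem:SSL} does not cover all of $\Delta_{[S,T]}$. The paper splits into two cases: for $t\in[s,s+\tfrac2n]$ it abandons the semigroup representation and bounds $A_{s,t}$ directly by $\|f\|_{\bC^\beta_\bba}\,\big\||M_r-\Gamma_{r-k_n(r)}M_{k_n(r)}|_\bba^\beta\wedge1\big\|_{L_2}$ via \cref{lem:moment-M}, then converts half a power of $|t-s|\le \tfrac2n$ into $n^{-1/2}$; only for $t\ge s+\tfrac2n$ does it argue as you do.

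The same omission is more serious in the coboundary estimate, because there you never use hypothesis (2) of the lemma (the $n^{-1}$ measurability lag of $\phi_{k_n(\cdot)}$), which is exactly what makes the estimate work when $|t-s|\le \tfrac4n$. In that regime the semigroup time $k_n(r)-s_2$ can be zero or negative, so the $\bC^1$-type bound you invoke from \eqref{est:semi-group-1} is unavailable (it degenerates like $t^{(\beta-1)/\alpha}$ as the smoothing time vanishes, and $f$ itself is only $\beta$-H\"older with $\beta<1$, so the increment $|\mE^{s_1}\phi_{k_n(r)}-\mE^{s_2}\phi_{k_n(r)}|$ cannot be exploited linearly). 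The paper's fix is to note that on $[k_n(s_1),k_n(s_1)+\tfrac2n]$ the two conditionings coincide by hypothesis (2) (so that piece vanishes), and on the remaining piece to condition at $k_n(r)-n^{-1}\ge s_1$, gaining $P_{n^{-1}}$-smoothing and hence a gradient bound of size $n^{(1-\beta)/\alpha}$; condition \eqref{con:constant-S3-0} is then used to trade $\gamma_\phi n^{-\gamma_1}$ against this blow-up, i.e.\ $n^{-\gamma_1-(\beta-1)/\alpha}\le n^{-(\frac12+\frac\beta\alpha\wedge\frac12)}$ — a different mechanism from the ``exponent of $|t-s|$ above $1$'' role you assign to \eqref{con:constant-S3-0} (which is its role only in the regime $|t-s|\ge\tfrac4n$). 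Finally, for condition (3) the paper does not argue as in \cref{lem:auxli-S1S2} but uses a martingale-difference/BDG decomposition plus hypothesis (3); your appeal to Step 3 of \cref{lem:auxli-S1S2} would need the same care with the term $\Gamma_{r-k_n(r)}f(M_{k_n(r)}+\phi_{k_n(r)})$ when $k_n(r)$ falls below $t_{i-1}$, though this qualitative step is a lesser concern than the two quantitative gaps above.
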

\begin{proof}
Denote for any $(s,t)\in\Delta_{[S,T]}$, $(S,T)\in[0,1]_\leq^2$,
    \begin{align*}
        A_{s,t}:=&\mE^{s-(t-s)}\int_s^tf(M_r+\Gamma_{r-k_n(r)}\mE^{s-(t-s)}\phi_{k_n(r)})-\Gamma_{r-k_n(r)}f(M_{k_n(r)}+\mE^{s-(t-s)}\phi_{k_n(r)})dr,\\
        \cA_{t}:=&\int_0^tf(M_r+\Gamma_{r-k_n(r)}\phi_{k_n(r)})-\Gamma_{r-k_n(r)}f(M_{k_n(r)}+\phi_{k_n(r)})dr.
    \end{align*}
   We want to applyLemma \ref{lem:SSL} to such $A$ and $\cA$. Now we verify its conditions.
 We start with Lemma \ref{lem:SSL} (1). We analyze it in two cases.
   \begin{itemize}
       \item[Case I.] $t\in[s,s+\frac{2}{n}]$.
   \end{itemize}
   Following from \eqref{est:moment-M},  \eqref{prop:Markov}, 
   and the fact that $|t-s|\lesssim n^{-1}$ we have
   \begin{align*}
     &  \|A_{s,t}\|_{L_2(\Omega)|\sG}\\&\leq \int_s^t\|f(M_r+\Gamma_{r-k_n(r)}\mE^{s-(t-s)}\phi_{k_n(r)})-\Gamma_{r-k_n(r)}f(M_{k_n(r)}+\mE^{s-(t-s)}\phi_{k_n(r)})\|_{L_2(\Omega)|\sG}dr
       \\&\leq \|f\|_{\mC_\bba^\beta}\int_s^t\big\||M_r-\Gamma_{r-k_n(r)}M_{k_n(r)}
       |_\bba^\beta\wedge1\big\|_{L_2(\Omega)|\sG}dr
         \\&\leq \|f\|_{\mC_\bba^\beta} \int_s^t |r-k_n(r)|^{\frac{\beta}{\alpha}\wedge\frac{1}{2}-\epsilon}dr
           \\&\leq \|f\|_{\mC_\bba^\beta} n^{-(\frac{\beta}{\alpha+1}\wedge\frac{1}{2})+\epsilon}
           |t-s| \\&\leq \|f\|_{\mC_\bba^\beta} n^{-(\frac{1}{2}+\frac{\beta}{\alpha}\wedge\frac{1}{2})+2\epsilon}
           |t-s|^{\frac{1}{2}+\epsilon}.
   \end{align*}
\begin{itemize}
    \item[Case II.] $t\in[s+\frac{2}{n},1]$.
\end{itemize}
In this case we know that $k_n(r)\geq s-(t-s)$ for $r\geq s$, and also $\frac{t-s}{2}\leq k_n(r)- (s-(t-s))$. Following from \eqref{prop:Markov} and \eqref{eq:inde-incre} and the property of $(\Gamma_t)_{t\geq0}$ we can write
\begin{align*}
    A_{s,t}=&\int_s^tP_{r- (s-(t-s))}f(M_{s-(t-s)}+\Gamma_{s-(t-s)-r}\Gamma_{r-k_n(r)}\mE^{s-(t-s)}\phi_{k_n(r)})\\&\qquad\qquad-P_{k_n(r)- (s-(t-s))}\Gamma_{r-k_n(r)}f(M_{s-(t-s)}+\Gamma_{s-(t-s)-k_n(r)}\mE^{s-(t-s)}\phi_{k_n(r)})dr
 \\   =&\int_s^t\big(P_{r- (s-(t-s))}-P_{k_n(r)- (s-(t-s))}\Gamma_{r-k_n(r)})\big)f(M_{s-(t-s)}+\Gamma_{s-(t-s)-k_n(r)}\mE^{s-(t-s)}\phi_{k_n(r)})dr
\\   =&:I.
\end{align*}
Then applying \eqref{est:semi-group-2} with $ \gamma=0, \mu:=(\frac{1}{2}+\frac{\beta}{\alpha}\wedge\frac{1}{2})-\epsilon$   gives us
\begin{align*}
    |I|\leq &N\|f\|_{\mC_\bba^{(1+\alpha)\beta}}\int_s^tn^{-(\frac{1}{2}+\frac{\beta}{\alpha}\wedge\frac{1}{2})+2\epsilon}|t-s|^{\frac{\beta}{\alpha}\wedge(1-\epsilon)-(\frac{\beta}{\alpha}\wedge\frac{1}{2})+\epsilon}dr
    \\\leq&N\|f\|_{\mC_\bba^{(1+\alpha)\beta}} n^{-(\frac{1}{2}+\frac{\beta}{\alpha}\wedge\frac{1}{2})+2\epsilon}|t-s|^{\frac{1}{2}+\epsilon},
    \end{align*}
   which yields that
    \begin{align*}
        \|A_{s,t}\|_{L_2(\Omega)|\sG}\leq N\|f\|_{\mC_\bba^{(1+\alpha)\beta}} n^{-(\frac{1}{2}+\frac{\beta}{\alpha}\wedge\frac{1}{2})+2\epsilon}|t-s|^{\frac{1}{2}+\epsilon}.
    \end{align*}

  Combining those two cases together we have that  Lemma \ref{lem:SSL} (1) holds with taking $K_1=N\|f\|_{\mC_\bba^{(1+\alpha)\beta}} n^{-(\frac{1}{2}+\frac{\beta}{\alpha}\wedge\frac{1}{2})+2\epsilon}.$

  Now we check  Lemma \ref{lem:SSL} (2). As in Lemma \ref{lem:auxli-S1S2},  let $(s,t)\in\Delta_{[S,T]}$ and $u:=\frac{t+s}{2}$, for convenience also denote $s_1:=s-(t-s)$, $s_2:=s-(u-s)$, $s_3:=s$,  $s_4:=u$, $s_5:=t$ so that one has $s_1\leq s_2\leq s_3\leq  s_4\leq s_5$.  Observe  $\mE^{s-(t-s)}\delta A_{s_3,s_4,s_5}=\mE^{s_1}\delta A_{s_3,s_4,s_5}$. We can write
  \begin{align*}
    \mE^{s_1}\delta A_{s_3,s_4,s_5}=&\mE^{s_1} \mE^{s_2}\int_{s_3}^{s_4}f(M_r+ \Gamma_{r-k_n(r)} \mE^{s_1}\phi_{k_n(r)})- \Gamma_{r-k_n(r)} f(M_{k_n(r)}+ \mE^{s_1}\phi_{k_n(r)})\\&\qquad\qquad-f(M_r+ \Gamma_{r-k_n(r)} \mE^{s_2}\phi_{k_n(r)})+ \Gamma_{r-k_n(r)} f(M_{k_n(r)}+ \mE^{s_2}\phi_{k_n(r)})dr
    \\&+\mE^{s_1} \mE^{s_3}\int_{s_4}^{s_5}f(M_r+ \Gamma_{r-k_n(r)} \mE^{s_1}\phi_{k_n(r)})- \Gamma_{r-k_n(r)} f(M_{k_n(r)}+ \mE^{s_1}\phi_{k_n(r)})\\&\qquad\qquad-f(M_r+ \Gamma_{r-k_n(r)} \mE^{s_3}\phi_{k_n(r)})+ \Gamma_{r-k_n(r)} f(M_{k_n(r)}+ \mE^{s_3}\phi_{k_n(r)})dr\\=&:I_1+I_2.
  \end{align*}
  We can notice that those two terms can be estimated in the same way so we only need to estimate one of them and the other one follows similarly. Let us estimate $I_1$. Again we analyse in two cases.
  \begin{itemize}
      \item[Case I.] $|t-s|\geq \frac{4}{n}$.
  \end{itemize}
  For $r\geq s_3$ we have $0<\frac{t-s}{4}\leq s_3-n^{-1}-s_2\leq k_n(r)-s_2$ and
  \begin{align*}
      I_1=&\mE^{s_1} \int_{s_3}^{s_4}P_{r-s_2}f(M_{s_2}+ \Gamma_{s_2-r}\Gamma_{r-k_n(r)} \mE^{s_1}\phi_{k_n(r)})\\&\qquad\qquad- P_{k_n(r)-s_2}\Gamma_{r-k_n(r)} f(M_{s_2}+ \Gamma_{s_2-k_n(r)}\mE^{s_1}\phi_{k_n(r)})\\&\qquad\qquad-P_{r-s_2}f(M_{s_2}+ \Gamma_{s_2-r}\Gamma_{r-k_n(r)} \mE^{s_2}\phi_{k_n(r)})\\&\qquad\qquad+ P_{k_n(r)-s_2}\Gamma_{r-k_n(r)} f(M_{s_2}+ \Gamma_{s_2-k_n(r)}\mE^{s_2}\phi_{k_n(r)})dr
      \\=&\mE^{s_1} \int_{s_3}^{s_4}(P_{r-s_2}-P_{k_n(r)-s_2}\Gamma_{r-k_n(r)} )f(M_{s_2}+ \Gamma_{s_2-k_n(r)}\mE^{s_1}\phi_{k_n(r)})\\&\qquad\qquad-(P_{r-s_2}-P_{k_n(r)-s_2}\Gamma_{r-k_n(r)} )f(M_{s_2}+ \Gamma_{s_2-k_n(r)}\mE^{s_2}\phi_{k_n(r)})dr,
  \end{align*}
  inside the above we also applied the fact that $\Gamma_{s_2-r}\Gamma_{r-k_n(r)}=\Gamma_{s_2-k_n(r)}$.
  We apply \eqref{est:semi-group-2} with $ \gamma=1, \mu:=(\frac{1}{2}+\frac{\beta}{\alpha}\wedge\frac{1}{2})-\epsilon$ together with \eqref{con:constant-S3-1} and taking $\epsilon':=\gamma_1-(\frac{1}{2}+\frac{\beta}{\alpha}\wedge\frac{1}{2}+\frac{1}{\alpha}-\frac{\beta}{\alpha})>0$ by \eqref{con:constant-S3-0}  then get
  \begin{align*}
      |I_1|\leq &\int_{s_3}^{s_4}\|P_{r-s_2}-P_{k_n(r)-s_2}\Gamma_{r-k_n(r)}f \|_{\bC^1}|\Gamma_{s_2-k_n(r)}(\mE^{s_1}-\mE^{s_2})\phi_{k_n(r)})| dr
      \\\leq &N\|f\|_{\mC_\bba^{(1+\alpha)\beta}}\int_{s_3}^{s_4} n^{-(\frac{1}{2}+\frac{\beta}{\alpha}\wedge\frac{1}{2})+\epsilon}|t-s|^{-(\frac{1}{2}+\frac{\beta}{\alpha}\wedge\frac{1}{2}+\frac{1}{\alpha}-\frac{\beta}{\alpha})}\gamma_\phi|s_2-s_1|^{\gamma_1}dr
      \\\leq &N\|f\|_{\mC_\bba^{(1+\alpha)\beta}}\gamma_\phi n^{-(\frac{1}{2}+\frac{\beta}{\alpha}\wedge\frac{1}{2})+\epsilon}|t-s|^{1+\epsilon'}.
  \end{align*}
  As we said before, similarly we also have
  \begin{align*}
  |I_2|\leq     N\|f\|_{\mC_\bba^{(1+\alpha)\beta}}\gamma_\phi n^{-(\frac{1}{2}+\frac{\beta}{\alpha}\wedge\frac{1}{2})+\epsilon}|t-s|^{1+\epsilon'}.
  \end{align*}
  Hence we get
  \begin{align*}
       \|\mE^{s_1}\delta A_{s_3,s_4,s_5}\|_{L_2(\Omega)|\sG}\leq N\|f\|_{\mC_\bba^{(1+\alpha)\beta}}\gamma_\phi n^{-(\frac{1}{2}+\frac{\beta}{\alpha}\wedge\frac{1}{2})+\epsilon}|t-s|^{1+\epsilon'}.
  \end{align*}
  \begin{itemize}
      \item[Case II.] $|t-s|\leq \frac{4}{n}$.
  \end{itemize}
 If $r\in[k_n(s_1),k_n(s_1)+\frac{2}{n}]$, by assumption (2) we know that $\phi_{k_n(r)}$ is $\cF_{(k_n(r)-n^{-1})\vee0}$-measurable therefore
  $\mE^{s_1}\phi_{k_n(r)}=\mE^{s_2}\phi_{k_n(r)}=\phi_{k_n(r)}$, it yields that $I_1=0$. So we only need to care the interval $r\in[k_n(s_1)+\frac{2}{n},1]$ in which $k_n(r)-n^{-1}\geq s_1$ and we can write
  \begin{align*}
      &I_1\\=&\mE^{s_1}\int_{[s_3,s_4]\cap[k_n(s_1)+\frac{2}{n},1]}\mE^{k_n(r)-n^{-1}}\big(f(M_r+ \Gamma_{r-k_n(r)}\mE^{s_1}\phi_{k_n(r)})- \Gamma_{r-k_n(r)} f(M_{k_n(r)}+ \mE^{s_1}\phi_{k_n(r)})\\&\qquad\qquad\qquad-f(M_r+  \Gamma_{r-k_n(r)}\mE^{s_2}\phi_{k_n(r)})+ \Gamma_{r-k_n(r)} f(M_{k_n(r)}+ \mE^{s_2}\phi_{k_n(r)})\big)dr\\=&\mE^{s_1}\int_{[s_3,s_4]\cap[k_n(s_1)+\frac{2}{n},1]} P_{r-k_n(r)+n^{-1}} f(M_{k_n(r)-n^{-1}}+ \Gamma_{k_n(r)-n^{-1}-r} \Gamma_{r-k_n(r)}\mE^{s_1}\phi_{k_n(r)})
      \\&\qquad\qquad\qquad-P_{r-k_n(r)+n^{-1}} f(M_{k_n(r)-n^{-1}}+ \Gamma_{k_n(r)-n^{-1}-r} \Gamma_{r-k_n(r)}\mE^{s_2}\phi_{k_n(r)}) \\&\qquad\qquad\qquad+P_{n^{-1}}\Gamma_{r-k_n(r)}f(M_{k_n(r)-n^{-1}}+ \Gamma_{{k_n(r)-n^{-1}}} \mE^{s_2}\phi_{k_n(r)})\\&\qquad\qquad\qquad-P_{n^{-1}}\Gamma_{r-k_n(r)} f(M_{k_n(r)-n^{-1}}+\Gamma_{{k_n(r)-n^{-1}}}  \mE^{s_1}\phi_{k_n(r)})
     dr.
  \end{align*}
  In the second equality we applied the fact that $\phi_{k_n(r)}$ is $\cF_{k_n(r)-n^{-1}}$-measurable.  By  \eqref{est:semi-group-2} with $\gamma=1$ and \eqref{con:constant-S3-0}, \eqref{con:constant-S3-1} we get
  \begin{align*}
      |I_1|\leq &\int_{[s_3,s_4]\cap[k_n(s_1)+\frac{2}{n},1]} \big(\|P_{r-k_n(r)+n^{-1}} f\|_{\bC^1}+\|P_{n^{-1}}\Gamma_{r-k_n(r)} f\|_{\bC^1}\big)
      |\mE^{s_1}\phi_{k_n(r)}-\mE^{s_2}\phi_{k_n(r)}|dr
      \\\leq&N(\|f\|_{\bC_\bba^\beta}+\|f\|_{\bC_\bba^{(1+\alpha)\beta}})\gamma_\phi\int_{[s_3,s_4]\cap[k_n(s_1)+\frac{2}{n},1]}n^{-\frac{\beta-1}{\alpha}}|s_2-s_1|^{\gamma_1}dr
      \\\leq&N\|f\|_{\bC_\bba^{(1+\alpha)\beta}}\gamma_\phi n^{-\gamma_1-\frac{\beta-1}{\alpha}+\epsilon}|t-s|^{1+\epsilon}
\\\leq&N\|f\|_{\bC_\bba^{(1+\alpha)\beta}}\gamma_\phi n^{-(\frac{1}{2}+\frac{\beta}{\alpha}\wedge\frac{1}{2})+\epsilon}|t-s|^{1+\epsilon},
  \end{align*}
  where the penultimate one follows from the fact that $|t-s|\leq\frac{4}{n}$ and the last one is from \eqref{con:constant-S3-0}.

  Now we can conclude that  Lemma \ref{lem:SSL} (2) holds with $K_2=N\|f\|_{\bC_\bba^{(1+\alpha)\beta}}\gamma_\phi n^{-(\frac{1}{2}+\frac{\beta}{\alpha}\wedge\frac{1}{2})+\epsilon}.$

  Lastly we verify  Lemma \ref{lem:SSL} (3).  Denote $t_i=s+\frac{(t-s)}{m}i$ for $m\in\mN, i:=\{0,1,\ldots,m\}$ and $(s,t)\in[S,T]_\leq^2$. Since $t_{i}-(t_{i+1}-t_{i})=t_{i-1}$, we have
    \begin{align}
        \label{est:A-S-3}\|\cA_t-\cA_s-\sum_{i=1}^{m-1}&A_{s+i\frac{t-s}{m},s+(i+1)\frac{t-s}{m}} \|_{L_1(\Omega)}\nonumber\\ =&\|\cA_t-\cA_s-\sum_{i=1}^{m-1}A_{t_{i},t_{i+1}}\|_{L_1(\Omega)}
        \nonumber\\\leq& \|\cA_{t_{1}}-\cA_{t_{0}}\|_{L_1(\Omega)}
    \nonumber\\&  + \big\| \sum_{i=1}^{m-1}\cA_{t_{i+1}}-\cA_{t_{i}}-\mE^{t_i}(\cA_{t_{i+1}}-\cA_{t_{i}})\big\|_{L_2(\Omega)}  \nonumber\\&+\big\| \sum_{i=1}^{m-1}\mE^{t_i}(\cA_{t_{i+1}}-\cA_{t_{i}})-\mE^{t_{i-1}}(\cA_{t_{i+1}}-\cA_{t_{i}})\big\|_{L_2(\Omega)}
         \nonumber\\&+\big\|\sum_{i=1}^{m-1}A_{t_{i},t_{i+1}}-\mE^{t_{i-1}}(\cA_{t_{i+1}}-\cA_{t_{i}})\big\|_{L_1(\Omega)}=:I_3+I_4+I_5+I_6.
    \end{align}
    It is easy to get that
    \begin{align}\label{est:I3}
        I_3\leq 2\|f\|_\infty m^{-1}.
    \end{align}
    For $I_4$ we observe that $(\cA_{t_{i+1}}-\cA_{t_{i}}-\mE^{t_i}(\cA_{t_{i+1}}-\cA_{t_{i}}))_{i=1,\ldots,m-1}$ is a martingale difference sequence with respect to $(\cF_{t_{i+1}})_{i=1,\ldots,m-1}$. By Burkholder-Davis-Gundy inequality we get
    \begin{align}\label{est:I4}
        I_4^2\leq \sum_{i=1}^{m-1}\big\| \cA_{t_{i+1}}-\cA_{t_{i}}-\mE^{t_i}(\cA_{t_{i+1}}-\cA_{t_{i}})\big\|_{L_2(\Omega)}^2 \leq 2\|f\|_\infty^2 m^{-1}.
    \end{align}
    Likewise we get that $(\mE^{t_i}(\cA_{t_{i+1}}-\cA_{t_{i}})-\mE^{t_{i-1}}(\cA_{t_{i+1}}-\cA_{t_{i}}))_{i=1,\ldots,m-1}$ is a martingale difference sequence with respect to $(\cF_{t_{i}})_{i=1,\ldots,m-1}$ so we get
       \begin{align}\label{est:I5}
        I_5^2\leq \sum_{i=1}^{m-1}\big\| \mE^{t_i}(\cA_{t_{i+1}}-\cA_{t_{i}})-\mE^{t_{i-1}}(\cA_{t_{i+1}}-\cA_{t_{i}})\big\|_{L_2(\Omega)}^2 \leq 16\|f\|_\infty^2 m^{-1}.
    \end{align}
    For $I_6$, if $\phi\equiv0$ then $I_6=0$; otherwise by condition (3) and the fact that $ \|f\|_{\bC^\beta}\lesssim  \|f\|_{\bC_\bba^{(1+\alpha)\beta}}$ we have for $\beta>0$, and for $i=1,\ldots, m-1$
    \begin{align*}
       \big \|A_{t_{i},t_{i+1}}&-\mE^{t_{i-1}}(\cA_{t_{i+1}}-\cA_{t_{i}})\big\|_{L_1(\Omega)}\\\leq& \big \|\int_{t_{i}}^{t_{i+1}}f(M_r+ \Gamma_{r-k_n(r)}\phi_{k_n(r)})-  f(M_{r}+ \Gamma_{r-k_n(r)}\mE^{t_{i-1}}\phi_{k_n(r)})dr\big\|_{L_1(\Omega)}\\&+\big \|\int_{t_{i}}^{t_{i+1}}\Gamma_{r-k_n(r)} f(M_{k_n(r)}+ \phi_{k_n(r)})-\Gamma_{r-k_n(r)} f(M_{k_n(r)}+ \mE^{t_{i-1}}\phi_{k_n(r)})dr\big\|_{L_1(\Omega)}
       \\\leq& \|f\|_{\bC^\beta}\int_{t_{i}}^{t_{i+1}}\|\Gamma_{r-k_n(r)}(\phi_{k_n(r)}- \mE^{t_{i-1}}\phi_{k_n(r)})\|_{L_1(\Omega)}^\beta dr
       \\\leq&
      \|f\|_{\bC_\bba^{(1+\alpha)\beta}} \int_{t_{i}}^{t_{i+1}}  \|\phi_{k_n(r)}-  \mE^{t_{i-1}}\phi_{k_n(r)}\big\|_{L_1(\Omega)}^\beta dr
       \\\leq& \|f\|_{\bC_\bba^{(1+\alpha)\beta}}|t_{i+1}-t_i|\gamma_\phi|t_{i+1}-t_i|^{\beta\gamma_1}
       \\\leq& N\|f\|_{\bC_\bba^{(1+\alpha)\beta}} (1+\gamma_\phi)m^{-1-\gamma_1\beta},
    \end{align*}
    in above we applied the fact that if $k_n(r)\leq t_{i-1}$ then $\mE^{t_{i-1}}\phi_{k_n(r)}=\phi_{k_n(r)}$, if $k_n(r)\geq t_{i-1}$ then \eqref{con:constant-S3-1} works  and $k_n(r)- t_{i-1}\leq t_{i+1}-t_{i-1} $. We plug the above estimate into $I_6$ then get
    \begin{align}\label{est:I6}
        I_6\leq N\|f\|_{\bC_\bba^{(1+\alpha)\beta}} (1+\gamma_\phi)m^{-\gamma_1\beta}\mathbf{1}_{\beta>0},
    \end{align}
    We now combine \eqref{est:A-S-3} with \eqref{est:I3}, \eqref{est:I4}, \eqref{est:I5} and \eqref{est:I6} togehter, we obtain
    \begin{align*}
        \|\cA_t-\cA_s-\sum_{i=1}^{m-1}A_{t_{i},t_{i+1}}\|_{L_1(\Omega)}\leq N\|f\|_{\bC_\bba^{(1+\alpha)\beta}}
        (1+\gamma_\phi)m^{-\gamma_1\beta}\mathbf{1}_{\beta>0}+N\|f\|_\infty m^{-1/2}.
    \end{align*}
    which implies that Lemma \ref{lem:SSL} (3) holds.

    In the end all of the conditions from Lemma \ref{lem:SSL} are verified therefore \eqref{est:S3-final} holds by \eqref{con:SSL-est}.
\end{proof}
\begin{corollary}
    \label{cor:est-S3}For any $\beta\in(1-\frac{\alpha}{2},(\alpha-1)(1+\alpha))$ and $0<\epsilon$ there exists a constant $N=N(\alpha,\beta,d,\|b\|_{\mC_\bba^{\beta}},\epsilon)$ independent of $n$ so that for any $(s,t)\in[0,1]_\leq^2$ and any $\sigma$-algebra $\sG\subset\cF_{s}$ the following holds
     \begin{align}
         \label{est:cor-S3} \|S_{s,t}^{b,n,3}\|_{L_2(\Omega)|\sG} \leq& N|t-s|^{\frac{1}{2}+\epsilon}n^{-(\frac{1}{2}+\frac{\beta}{(1+\alpha)\alpha}\wedge\frac{1}{2})+2\epsilon}.
     \end{align}
\end{corollary}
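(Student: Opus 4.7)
The plan is to apply Lemma \ref{lemm:est-S3} directly with $f=b$ and $\phi=Y^n$, since the integrand in $S_{s,t}^{b,n,3}$ coincides with the one bounded in \eqref{est:S3-final}. The whole proof thus reduces to verifying the three hypotheses imposed on $\phi$. Condition (3) is immediate because Assumption \ref{ass:main1} gives $\beta>1-\alpha/2>0$, and Assumption \ref{ass:main1} likewise provides the required regularity of $b$ entering the constant.

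For condition (2), I would unpack the Euler scheme \eqref{eq:SDE-EM} to show that $Y^n_{k_n(t)}=Z^n_{k_n(t)}-M_{k_n(t)}$ is $\cF_{(k_n(t)-n^{-1})\vee 0}$-measurable. Writing
\begin{align*}
V^n_{k_n(t)}-L_{k_n(t)} = \eta+\int_0^{k_n(t)}\Gamma_{s-k_n(s)}b(Z^n_{k_n(s)})\,\dif s,
\end{align*}
one observes that on the last sub-interval $[k_n(t)-n^{-1},k_n(t))$ one has $k_n(s)=k_n(t)-n^{-1}$, so the integrand is $\cF_{k_n(t)-n^{-1}}$-measurable. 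A Fubini-type rearrangement, substituting the Euler formula for $V^n$ into $\int_0^{k_n(t)}V^n_s\,\dif s$, gives the analogous conclusion for $X^n_{k_n(t)}-\int_0^{k_n(t)}L_r\,\dif r$.

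Condition (1) is the core step. Using the tower property and Jensen's inequality,
\begin{align*}
\mE^{s}\big|\mE^{t}Y^n_r-\mE^{s}Y^n_r\big| = \mE^{s}\big|\mE^{t}(Y^n_r-\mE^{s}Y^n_r)\big|\leq \mE^{s}|Y^n_r-\mE^{s}Y^n_r|,
\end{align*}
and then the a priori bound \eqref{est:apaiori-2} with $p=1$ yields a bound of order $|r-s|^{1+\beta/\alpha-\epsilon}$. One therefore takes $\gamma_1=1+\beta/\alpha-\epsilon$ and $\gamma_\phi$ depending on $\|b\|_{\mC_\bba^\beta}$. The compatibility with \eqref{con:constant-S3-0} reduces to the strict positivity of $\tfrac12+\tfrac{2\beta}{\alpha}-\tfrac{1}{\alpha}-\tfrac{\beta}{\alpha}\wedge\tfrac12$, which I would split by cases: if $\beta/\alpha\leq 1/2$ the quantity equals $(\alpha+2\beta-2)/(2\alpha)$, positive by $\beta>1-\alpha/2$; if $\beta/\alpha>1/2$ it equals $(2\beta-1)/\alpha$, positive because $\alpha>1$ forces $\beta>\alpha/2>1/2$.

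The main book-keeping obstacle is precisely this last case-split: one needs to confirm the margin for $\gamma_1$ uses only the standing subcritical assumption $\beta>1-\alpha/2$, so that $\epsilon$ can be chosen small enough to preserve it. With the three conditions verified, Lemma \ref{lemm:est-S3} applied to $f=b$ and $\phi=Y^n$ delivers \eqref{est:cor-S3}; the $\gamma_\phi$ factor coming from \eqref{est:apaiori-2} and the $\|b\|_{\mC_\bba^{(1+\alpha)\beta}}$ factor produced by Lemma \ref{lemm:est-S3} both get absorbed into $N$.
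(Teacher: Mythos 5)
Your proposal is correct and follows essentially the same route as the paper: apply \cref{lemm:est-S3} with $f=b$, $\phi=Y^n$, verify condition (1) via the tower property, conditional Jensen and \eqref{est:apaiori-2} with $p=1$ (giving $\gamma_1=1+\tfrac{\beta}{\alpha}-\epsilon$), check the measurability condition (2) from the structure of the Euler scheme, and note $\beta>0$ for condition (3). Your case-split verification of \eqref{con:constant-S3-0} is a harmless variant of the paper's one-line estimate $\gamma_1>\tfrac12+\tfrac1\alpha\geq\tfrac12+\tfrac1\alpha+\tfrac{\beta}{\alpha}\wedge\tfrac12-\tfrac{\beta}{\alpha}$, and your explicit measurability argument is in fact more careful than the paper's brief assertion.
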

\begin{proof}
    We apply Lemma \ref{lemm:est-S3} with taking $\phi=Y^n$. 
     Choose $\delta$ to be sufficiently small so that $\beta>1-\frac{\alpha}{2}+\delta\alpha$.

    Let us check the conditions of Lemma \ref{lemm:est-S3}. For any $(s,t)\in[0,1]_\leq^2, r\in[s,t]$, we have by \eqref{est:apaiori-2} with $q=1$ we get
    \begin{align*}
        \mE^s|\mE^tY^n_r-\mE^sY^n_r|=    \mE^s|\mE^t(\mE^tY^n_r-\mE^sY^n_r)| \leq \mE^s|\mE^sY^n_r-Y^n_r|\lesssim |r-s|^{1-\delta+\frac{\beta}{\alpha}}.
    \end{align*}
    Therefore \eqref{con:constant-S3-1} holds with $\gamma_1=1-\delta+\frac{\beta}{\alpha}$ which also fulfills \eqref{con:constant-S3-0}; indeed, notice that we take $\beta>1-\frac{\alpha}{2}+\delta\alpha$, which implies that   \eqref{con:constant-S3-0} holds with
    \begin{align*}
        \gamma_1=1-\delta+\frac{\beta}{\alpha}>\frac{1}{2}+\frac{1}{\alpha}\geq\frac{1}{2}+\frac{1}{\alpha}+\frac{\beta}{\alpha}\wedge\frac{1}{2}-\frac{\beta\wedge1}{\alpha}.
    \end{align*}
    We also see that $Y_{k_n(t)}^n$ is $\cF_{k_n(t)\vee0}$-measurable for $t\in[0,1]$.  Now all of the conditions of Lemma \ref{lemm:est-S3} are verified, hence \eqref{est:cor-S3} follows directly from \eqref{est:S3-final}.
\end{proof}
\subsection{Crucial conditional quadrature estimate}
\begin{theorem}
  \label{thm:qua}For any $\beta\in(1-\frac{\alpha}{2},(\alpha-1)(1+\alpha))$ and any $0<\epsilon<\frac{1}{2}+\frac{(\beta-1)}{\alpha}$ there exists a constant $N=N(\alpha,\beta,d,\|b\|_{\mC_\bba^{\beta}},\epsilon)$ independent of $n$ so that for any $n\in\mN$ and $k=0,1,\ldots,n$,  $(s,t)\in[\frac{k}{n},1]_\leq^2$ the following holds
     \begin{align}
         \label{est:qua} &\|\int_s^tb(Z_r)-\Gamma_{r-k_n(r)}b(Z_{k_n(r)}^n)dr\|_{L_2(\Omega)|\cF_{\frac{k}{n}}} \nonumber\\& \leq N|t-s|^{\frac{1}{2}+\epsilon}(n^{-(\frac{1}{2}+\frac{\beta}{(1+\alpha)\alpha}\wedge\frac{1}{2})+2\epsilon}+[Y-Y^n]_{C_p^{\frac{1}{2}}|\cF_{\frac{k}{n}},[s,t]}+[Y-Y^n]_{C_p^0|\cF_{\frac{k}{n}},[s,t]}).
     \end{align}
\end{theorem}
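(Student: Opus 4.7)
The plan is to read \eqref{est:qua} off directly from the decomposition exhibited in \eqref{eq:main-error} together with the three conditional bounds already proved in Corollaries \ref{cor:S1S2} and \ref{cor:est-S3}. Writing the bottom row of \eqref{eq:main-error} in isolation,
$$
\int_s^t b(Z_r)-\Gamma_{r-k_n(r)}b(Z^n_{k_n(r)})\,dr = S^{b,n,1}_{s,t}+S^{b,n,2}_{s,t}+S^{b,n,3}_{s,t},
$$
the $L_2(\Omega)\mid\cF_{k/n}$ triangle inequality reduces the proof to estimating each $\|S^{b,n,i}_{s,t}\|_{L_2(\Omega)\mid\cF_{k/n}}$. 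Since $s\geq k/n$, one has $\cF_{k/n}\subset\cF_s$, so taking $\sG:=\cF_{k/n}$ and $p=2$ is an admissible choice in each of \eqref{est:cor-S1}, \eqref{est:cor-S2}, \eqref{est:cor-S3}.

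Plugging in the three corollaries produces six time powers: $|t-s|^{\frac{\beta-1}{\alpha}+\frac32}$ and $|t-s|^{\frac{\beta-1}{\alpha}+1}$ (twice each, from $S^{b,n,1}$ and $S^{b,n,2}$), plus $|t-s|^{\frac12+\epsilon}$ (from $S^{b,n,3}$). The next step is to factor out the slowest of these, $|t-s|^{\frac12+\epsilon}$. The hypotheses $\beta>1-\frac\alpha2$ and $\epsilon<\frac12+\frac{\beta-1}{\alpha}$ give $\frac{\beta-1}{\alpha}+1>\frac12+\epsilon$, so with $|t-s|\leq 1$ the residual powers $|t-s|^{\frac{\beta-1}{\alpha}+\frac12-\epsilon}$ and $|t-s|^{\frac{\beta-1}{\alpha}+1-\epsilon}$ are uniformly bounded by $1$ and absorbed into the constant $N$. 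For the $n$-dependent factors I would use the trivial comparisons $n^{-(\frac12+\frac{\beta}{\alpha}\wedge\frac12)+\epsilon}\leq n^{-(\frac12+\frac{\beta}{\alpha}\wedge\frac12)+2\epsilon}$ and $n^{-1}\leq n^{-(\frac12+\frac{\beta}{\alpha}\wedge\frac12)+2\epsilon}$ (the latter because $\frac12+\frac{\beta}{\alpha}\wedge\frac12\leq 1$). The $[Y-Y^n]_{C_p^{1/2}}$ and $[Y-Y^n]_{C_p^0}$ seminorms contributed by \eqref{est:cor-S1} pass through unchanged and become the last two terms on the right-hand side of \eqref{est:qua}.

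Assembling these bounds yields exactly the claimed estimate, with $p$ replaced by $2$ in the seminorms. There is no serious obstacle here, since the heavy lifting (stochastic sewing, semigroup regularisation, and the Markov decomposition of $M$) was done upstream; the only care needed is matching the admissible range of $\epsilon$ in each corollary with the single $\epsilon<\frac12+\frac{\beta-1}{\alpha}$ imposed in the theorem, and checking that no hidden self-reference to $[Y-Y^n]$ arises in the $S^{b,n,2}$ and $S^{b,n,3}$ contributions --- indeed \eqref{est:cor-S2} and \eqref{est:cor-S3} depend only on $b$, $n$ and the a priori bounds of Section \ref{sec:auxi-tool}, so the seminorms appear linearly on the right-hand side, as needed for the subsequent Grönwall-type closure in Section \ref{sec:fin-proof}.
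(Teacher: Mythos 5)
Your proposal is correct and follows essentially the same route as the paper: decompose the integral via \eqref{eq:main-error} into $S^{b,n,1}+S^{b,n,2}+S^{b,n,3}$, apply \eqref{est:cor-S1}, \eqref{est:cor-S2}, \eqref{est:cor-S3} with $\sG=\cF_{k/n}$ (the paper additionally notes $\frac{k}{n}\leq k_n(s)$, so $\cF_{k/n}\subset\cF_{k_n(s)}$, which is the inclusion ultimately needed for the $S^{b,n,3}$ estimate), and then absorb the time and $n$ powers exactly as you do, using $1+\frac{\beta-1}{\alpha}>\frac12+\epsilon$ and $\frac12+\frac{\beta}{\alpha}\wedge\frac12\leq 1$.
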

\begin{proof}
   Fix  $n\in\mN$ and $k$,  $(s,t)\in[\frac{k}{n},1]_\leq^2$. Following from \eqref{est:cor-S1}, \eqref{est:cor-S2} and \eqref{est:cor-S3} with taking $\sG=\cF_{\frac{k}{n}}$  and noticing that $\frac{k}{n}\leq k_n(s)$ hence $\cF_{\frac{k}{n}}\subset \cF_{k_n(s)}$, we get from \eqref{eq:main-error} that  \eqref{est:qua} holds considering $\frac{3}{2}+\frac{(\beta-1)}{\alpha}, 1+\frac{\beta-1}{2}>\frac{1}{2}+\epsilon$ and $\frac{1}{2}+\frac{\beta}{(1+\alpha)\alpha}\wedge\frac{1}{2}-2\epsilon<1$. The proof completes.
\end{proof}
\section{Proof of the main result} \label{sec:fin-proof}
\begin{proof}[Proof of Theorem \ref{thm:main-rough}]
    We follow the  idea from \cite[Proof of Theorem 2.2]{BDG-Levy}.

Notice that under Assumption \ref{ass:main1},
   if we denote $\beta':=\frac{\beta}{1+\alpha}\in(0,\alpha-1)$, then $\|b\|_{\bC_\bba^{(1+\alpha)\beta'}}=\|b\|_{\bC_\bba^{\beta}}$ is finite.
    \begin{align*}
       \rho:=(\frac{1}{2}+\frac{\beta'}{\alpha}\wedge \frac{1}{2})-\epsilon.
    \end{align*}

    In order to apply Lemma \ref{lem:John} we first show that for
    \begin{align}
        \label{def:I}
        \cI_t^n:=Z_t-Z_t^n=Y_t-Y_t^n,
    \end{align}
 for any  small $\kappa>0$ so that $\kappa N^*\leq \frac{1}{8}$ where constant $N^*$ is from \eqref{est:qua} (which was denoted as $N$ therein) and for any $(S,T)\in[0,1]_\leq^2$ with $T\in[S,1\wedge(S+\kappa)]$  we have
    \begin{align}
        \label{est:claim}
        \| \cI_T^n-\cI_S^n\|_{L_2(\Omega)|\cF_S}\leq \kappa^\frac{1}{2}(n^{\rho}+|\cI_S^n|)+4\|b\|_{\infty}n^{-1}.
    \end{align}
    According to the fact if $S$ is a gridpoint or not we divide the proof into two cases.

\begin{itemize}
      \item[Case I.] $S\in \{0,\frac{1}{n},\ldots, 1\}$.
  \end{itemize}

    We fix $S$.  We get that from \eqref{eq:main-error} and \eqref{def:I} that
    \begin{align*}
     \cI_t^n-\cI_s^n=    Y_t-Y_s-(Y_t^n-Y_s^n)= Z_t-Z_s-(Z_t^n-Z_s^n),
    \end{align*}
    together with taking $\frac{k}{n}=S$ in \eqref{est:qua} yields
\begin{align*}
 \| \cI_t^n-\cI_s^n\|_{L_2(\Omega)|\cF_S}= &\| Y_t-Y_s-(Y_t^n-Y_s^n)\|_{L_2(\Omega)|\cF_S}
 \\\leq& \Big\| \left(\begin{smallmatrix}\int_s^tV_r-V_r^n dr\\\int_s^tb(Z_r)-\Gamma_{r-k_n(r)}b(Z_{k_n(r)}^n)dr
    \end{smallmatrix}\right)\Big\|_{L_2(\Omega)|\cF_S}
     \\\leq& 2\| \int_s^t\int_s^rb(Z_u)-\Gamma_{u-k_n(u)}b(Z_{k_n(u)}^n)dudr\|_{L_2(\Omega)|\cF_S} \\&+2\|\int_s^tb(Z_r)-\Gamma_{r-k_n(r)}b(Z_{k_n(r)}^n)dr\|_{L_2(\Omega)|\cF_S}
     \\\leq& 4N^*|t-s|^{\frac{1}{2}+\epsilon}(n^{-\rho}+[Y-Y^n]_{C_p^{\frac{1}{2}}|\cF_{S},[s,t]}+|Y_S-Y_S^n|).
\end{align*}
The above estimate implies, by \eqref{def:norms-random-2} and the fact that  $\kappa N^*\leq \frac{1}{8}$, $T\in[S,1\wedge (S+\kappa)]$ so $t-s\leq T-S\leq \kappa$,  and $Y_S-Y_S^n=\cI_S^n$ by definition of $\cI^n$,
\begin{align*}
    [\cI^n]_{C_p^{\frac{1}{2}}|\cF_{S},[s,t]}\leq
  N^*\kappa(n^{-\rho}+ [\cI^n]_{C_p^{\frac{1}{2}}|\cF_{S},[s,t]}+|\cI_S^n|).
\end{align*}
By the choice at the beginning that $\kappa N^*<1$ we get
\begin{align} \label{est:caseI-grid}
    [\cI^n]_{C_p^{\frac{1}{2}}|\cF_{S},[s,t]}\leq
 n^{-\rho}+ |\cI_S^n|.
\end{align}
    \begin{itemize}
      \item[Case II.] $S\in [0,1]\backslash\{0,\frac{1}{n},\ldots, 1\}$.
  \end{itemize}

  We even simply assume $S\in[0,T)$ for generality to show \eqref{est:claim}.

  If $T-S\leq\frac{1}{n}$, then we can easily see that
  \begin{align}\label{est:S-Shat}
  |\cI_T^n-\cI_S^n|    \leq 2\|b\|_{\infty}|T-S|\leq 2\|b\|_{\infty}n^{-1},
  \end{align}
  which implies that \eqref{est:claim} holds.

  If $T-S>\frac{1}{n}$, denote $\hat S:=k_n(S)+\frac{1}{n}$ so $\hat S > S$, we have
  \begin{align}\label{est:I-iN}
      \| \cI_T^n-\cI_S^n\|_{L_2(\Omega)|\cF_S}\leq&   \| \cI_T^n-\cI_{\hat S}^n\|_{L_2(\Omega)|\cF_S}+  \| \cI_{\hat S}^n-\cI_S^n\|_{L_2(\Omega)|\cF_S}
      \nonumber\\=& \big(\mE(\mE(|\cI_T^n-\cI_{\hat S}^n|^2|\cF_{S})|\cF_{\hat S})\big)^\frac{1}{2}+  \| \cI_{\hat S}^n-\cI_S^n\|_{L_2(\Omega)|\cF_S}.
  \end{align}
  Considering that $\hat S$ is gridpoint and $T\leq \hat S+\kappa$, therefore we can apply the result from Case I \eqref{est:caseI-grid} then get
  \begin{align}\label{est:I-iN-1}
      \| \cI_T^n-\cI_{\hat S}^n\|_{L_2(\Omega)|\cF_{\hat S}}\leq& \kappa^\frac{1}{2}(
 n^{-\rho}+ |\cI_{\hat S}^n|)\leq  \kappa^\frac{1}{2}(
 n^{-\rho}+ |\cI_{ S}^n|)+|\cI_{\hat S}^n-\cI_S^n|.
  \end{align}
  From \eqref{est:S-Shat} we get
  \begin{align}\label{est:I-iN-2}
      |\cI_{\hat S}^n-\cI_S^n|\leq 2\|b\|_{\infty}|\hat S-S|\leq 2\|b\|_{\infty}n^{-1}.
  \end{align}
  Now we put \eqref{est:I-iN}, \eqref{est:I-iN-1} and \eqref{est:I-iN-2} together then get \eqref{est:claim}.\\

  With having \eqref{est:claim} we are ready to apply Lemma \ref{lem:John}.  Set $\cA:=\cI^n$ here and $\xi_t:=\kappa^\frac{1}{2}(n^{-\rho}+|\cA_t|)+4\|b\|_\infty n^{-1}$, $t\in[0,1]$. We see that both processes are continuous and \eqref{con:John} is satisfied because of \eqref{est:claim}. Therefore from \eqref{est:lem-John} we obtain that for any $p\geq1$ there exists constant $N=N(p)$ independent of $n$ so that for any $(S,T)\in[0,1]_\leq^2$ and $T\leq S+\kappa$ the following holds:
  \begin{align*}
    & \big \|\sup_{r\in[S,T]}|\cA_r-\cA_S|\big\|_{L_p(\Omega)|\cF_{ S}}\\\leq &N(\kappa^\frac{1}{2}n^{-\rho}+\kappa^\frac{1}{2} \big \|\sup_{r\in[S,T]}|\cA_r|\big\|_{L_p(\Omega)|\cF_{ S}}+4\|b\|_{\infty}n^{-1})
      \\\leq &N(\kappa^\frac{1}{2}n^{-\rho}+\kappa^\frac{1}{2}|\cA_S|  +\kappa^\frac{1}{2}\big\|\sup_{r\in[S,T]}|\cA_r-\cA_S|\big\|_{L_p(\Omega)|\cF_{ S}}+4\|b\|_{\infty}n^{-1}).
  \end{align*}
  By choosing $\kappa$ to be sufficiently small so that also we have $N\kappa\leq\frac{1}{2}$ since  $\kappa N^*\leq \frac{1}{8}$ initially,
  we  hence get
  \begin{align*}
   \big \|\sup_{r\in[S,T]}|\cA_r-\cA_S|\big\|_{L_p(\Omega)|\cF_{ S}}\leq  n^{-\rho}+|\cA_S|  +8N\|b\|_{\infty}n^{-1}.
  \end{align*}
  We can iterate this procedure for $\lceil \kappa^{-1}\rceil$ times then in the end get there exists $N'=N(d,\alpha,\beta,p,N^*)$ so that
  \begin{align*}
        \big\|\sup_{r\in[S,T]}|\cA_r|\big\|_{L_p(\Omega)}\leq  N'n^{-\rho}=N'n^{-(\frac{1}{2}+\frac{\beta}{\alpha(1+\alpha)}\wedge \frac{1}{2})+\epsilon},
  \end{align*}
which is the desired estimate \eqref{est:thm-EM-s}.  The proof finally completes.
\end{proof}
\begin{proof}[Proof of Corollary \ref{cor:a.s}]
    Take $\epsilon>0$ and let $p>\frac{1}{\epsilon}$. Denote $\Xi(\omega):=\sup_n(   \sup_{t\in[0,1]}|Z_t(\omega)-Z_t^n(\omega)|n^{(\frac{1}{2}+\frac{\beta}{\alpha(1+\alpha)}\wedge \frac{1}{2})-2\epsilon})$. From Theorem \ref{thm:main-rough} we have
    \begin{align*}
        \mE\Xi^p&=\mE \sup_n(   \sup_{t\in[0,1]}|Z_t(\omega)-Z_t^n(\omega)|n^{(\frac{1}{2}+\frac{\beta}{\alpha(1+\alpha)}\wedge \frac{1}{2})-2\epsilon})^p\\&\leq \mE \sum_n(   \sup_{t\in[0,1]}|Z_t(\omega)-Z_t^n(\omega)|n^{-(\frac{1}{2}+\frac{\beta}{\alpha(1+\alpha)}\wedge \frac{1}{2})-2\epsilon})^p
        \\&\leq N\sum_nn^{-p\epsilon}<\infty,
    \end{align*}
    which implies that $\Xi<\infty$ a.s.. The proof completes.
\end{proof}






 \end{document}